\newtheorem{thm}{Theorem}[section]
\newtheorem{prop}[thm]{Proposition}
\newtheorem{cor}[thm]{Corollary}
\newtheorem{lem}[thm]{Lemma}
\newtheorem{rem}[thm]{Remark}
\theoremstyle{definition}
\numberwithin{equation}{section}
 \newcommand{\Hh}{\mathcal{H}}
 \newcommand{\Ll}{\mathcal{L}}
 \newcommand{\Mm}{\mathcal{M}}
\newcommand{\NN}{\mathbb{N}} 
 \newcommand{\Pp}{\mathcal{P}}
\newcommand{\RR}{\mathbb{R}}
\newcommand{\ZZ}{\mathbb{Z}} 
\newcommand{\8}{\infty}
\renewcommand{\(}{\left(}
\renewcommand{\)}{\right)}
\newcommand{\Rn}{{\RR^n}}
\newcommand{\mc}{\mathcal}
\newcommand{\wt}{\widetilde}
\def \PPp {\mathbf{P}}
\newcommand{\supp}{\mathrm{supp}}
\title[Hardy spaces for Fourier--Bessel expansions]{Hardy spaces for Fourier--Bessel expansions}
\author[J. Dziuba\'nski]{Jacek Dziuba\'nski}
\address{Instytut Matematyczny\\
         Uniwersytet Wroc\l awski\\
         50-384 Wroc\l aw, Pl. Grunwaldzki 2/4\\
         Poland}
\email{jacek.dziubanski@gmail.com, preisner@math.uni.wroc.pl}
\author[M. Preisner]{Marcin Preisner}
\author[L. Roncal]{Luz Roncal}
\address{Departamento de Matem\'aticas y Computaci\'on\\
         Universidad de La Rioja\\
         26004 Logro\~no, Spain}
\email{luz.roncal@unirioja.es}
\author[P. R. Stinga]{Pablo Ra\'ul Stinga}
\address{Department of Mathematics\\
         The University of Texas at Austin\\
         1 University Station\\
         C1200, Austin, TX 78712-1202, United States of America}
\email{stinga@math.utexas.edu}
\thanks{The first and second authors were  supported by Polish funds for sciences  grant DEC-2012/05/B/ST1/00672
from Narodowe Centrum Nauki and by Polish funds of sciences NCN grant 2012/05/B/ST1/00672. The third author was partially supported by grant MTM2012-36732-C03-02 from Spanish Government. The fourth author was partially supported by grant MTM2011-28149-C02-01 from Spanish Government.}
\keywords{Fourier--Bessel expansion, Hardy space, maximal operator, atomic decomposition}
\subjclass[2010]{Primary: 42B30. Secondary: 42C05, 42B25}
\begin{document}

\begin{abstract}
We study Hardy spaces for Fourier--Bessel expansions associated with Bessel operators on $((0,1), \, x^{2\nu+1}\, dx)$ and $((0,1), dx)$. We define Hardy spaces $H^1$ as the sets of $L^1$-functions for which their maximal functions for the corresponding Poisson semigroups belong to $L^1$. Atomic characterizations are obtained.
\end{abstract}

\maketitle

%%%%%%%%%%%%%%%%%%%%%%   MAIN PART   %%%%%%%%%%%%%%%%%%%%%%%%%%

%%%%  Section 1
\section{Introduction}\label{sec1}

In this paper we give atomic characterizations of the Hardy spaces defined with Poisson integrals associated with two different Fourier--Bessel expansions on the unit interval $(0,1)$.

Let us briefly describe one of our motivations. For $n \ge 1$, let $B_1=\{X \in \mathbb{R}^n : |X|<1\}$ be the unit ball in $\mathbb{R}^n$.
Let us consider the Dirichlet problem for $U(X,t)$, with $(X,t)\in  B_1\times(0,\infty)$,
\begin{equation*}
%\label{eq:PoissonMulti}
\begin{cases}
U_{tt}+\Delta U=0,
\\U(X,0)=F(X),\end{cases}
\end{equation*}
where $\Delta$ is Dirichlet Laplacian in $B_1$.
When the initial datum is radial, by writing $|X|=x, F(X)=f(x)$, and by expressing the Laplacian in polar coordinates, we see that the solution $U$ is radial in $X$, so $U(X,t)=u(x,t)$, for some function $u$. Therefore, one is led to the problem in $(x,t)\in (0,1)\times(0,\infty)$
\begin{equation}
\label{eq:PoissonRadial} \begin{cases}
u_{tt}-\mathcal{L}u=0,
\\u(x,0)=f(x), \end{cases}
\end{equation}
under appropriate boundary conditions, see \cite{Folland}, where $\mathcal{L}$ is the operator
\begin{equation}\label{eq:LRedonda}
\mathcal{L}=-\frac{d^2}{dx^2}-\frac{2\nu+1}{x}\frac{d}{dx},
\end{equation}
 with the type index $\nu=n/2-1$. There is no need to consider just half-integer values of $\nu$, so we can take any general index $\nu>-1$. The operator  $\mathcal{L}$ has a basis of eigenfunctions $\phi_n^\nu$ (see Subsection \ref{ortt}) in $L^2((0,1),\mu)$, where $d\mu(x)=x^{2\nu+1}dx$, see \cite[Chapter~2]{Folland}. Therefore, by applying the Fourier method, we see that the solution of the Dirichlet problem \eqref{eq:PoissonRadial} is $u(x,t)=e^{-t\sqrt{\mathcal{L}}}f(x)=\mathcal{P}_tf(x)$, the Poisson semigroup associated to $\mathcal{L}$ applied to $f$ (see \eqref{Poissone_formula} below for the precise definition of $\mathcal{P}_t$).

In order to study the almost everywhere pointwise convergence of the solution $u$ to the initial datum $f$, for $f\in L^p((0,1),\mu)$, as $t\to0^+$, one considers the maximal operator
 \begin{equation}\label{eq:maximalNatural}
  \mathcal{M}f(x)=\sup_{t>0} |u(x,t)|=\sup_{t>0} |\mathcal{P}_t f(x)|, \quad x\in (0,1).
  \end{equation}
  It is known, see \cite{Ciaurri-Roncal-Bochner, Gilbert, NR, Nowak-Roncal-Sharp}, that such operator is bounded on $L^p((0,1),\mu)$, for $1<p<\infty$, and of weak type $(1,1)$. Then, as usual, we can get the desired almost everywhere convergence. As we just pointed out, for $f\in L^1((0,1),\mu)$, the maximal operator \eqref{eq:maximalNatural} is not, in general, in $L^1((0,1),\mu)$. This motivates us to define the Hardy space $H^1$ associated with $\mathcal{L}$, namely, the maximal subspace of $L^1((0,1),\mu)$ whose image under $\mathcal{M}$ is in $L^1((0,1),\mu)$.

A related problem to \eqref{eq:PoissonRadial} arises when one defines
$$
v(x,t):=x^{\nu+1/2}u(x,t).
$$
Then, for $g(x):=x^{\nu+1/2}f(x)$, the function $v$ solves
\begin{equation*}
\begin{cases}
v_{tt}-Lv=0,&\hbox{for}~x\in(0,1),~t>0,
\\v(x,0)=g(x),&\hbox{for}~x\in(0,1), \end{cases}
\end{equation*}
where
\begin{equation}\label{eq:LCuadrado}
L =x^{\nu+1/2}\circ \mathcal{L}\circ x^{-\nu-1/2}= -\frac{d^2}{dx^2} +\frac{\nu^2 -1/4}{x^2}.
\end{equation}
 The eigenfunctions of $L$, that are clearly related to those of $\mathcal{L}$, form now a basis of $L^2(0,1)=L^2((0,1),dx)$. The maximal operator
 \begin{equation}\label{eq:maximalLebesgue}
M g(x)=\sup_{t>0} |v(x,t)|=\sup_{t>0}|P_tg(x)|
\end{equation}
  is bounded in $L^p(0,1)$, for $1<p<\infty$ and it is of weak type $(1,1)$. See \eqref{Poissone_formula} for the definition of the Poisson semigroup $P_t$. A natural question is to characterize the Hardy space associated to $L$, that is, the  subspace of functions $g$ in $L^1(0,1)$ such that $Mg$ is in $L^1(0,1)$.

Next, we present our main results, Theorems A and B below. To this end we introduce the definitions of Hardy spaces and their corresponding atoms. In the whole paper we denote by $L^p(I, \mu)$ the $L^p$-space on an interval $I\subseteq (0,\infty)$ with respect to the measure $\mu$ given above, and by $L^p(I)$ the classical $L^p$-space related to the Lebesgue measure on $I$. Let us finally mention that although all the objects above exist for $\nu>-1$, from now on we consider only the case $\nu>-1/2$. This restriction is due to the techniques we use.

\subsection{Hardy space for the operator $\mathcal{L}$}\label{subsection L redondo}
Let $\Mm$ be the maximal function given in \eqref{eq:maximalNatural}. It is well-known that $\mc P_t$ and $\Mm$ can be applied to $L^1((0,1), \mu)$-functions, see the estimates in Lemma \ref{Lem:SharpNatural} below. We say that a function $f\in L^1((0,1), \mu)$ is in the Hardy space $H^1_{\mathcal{L}}$ when
    \begin{equation*}
    \|f\|_{H^1_{\mathcal{L}}}:=\|\mathcal{M}f\|_{L^1((0,1),\mu)} <\8.
    \end{equation*}

Now we introduce the atoms associated with $H^1_{{\Ll}}$. Denote the intervals
    $$\mathcal{I}_j=(1-2^{-j},1-2^{-j-1}], \quad \hbox{for}~ j=0,1,\ldots.$$

    \begin{figure}[here]
    \includegraphics[width=0.8\textwidth]{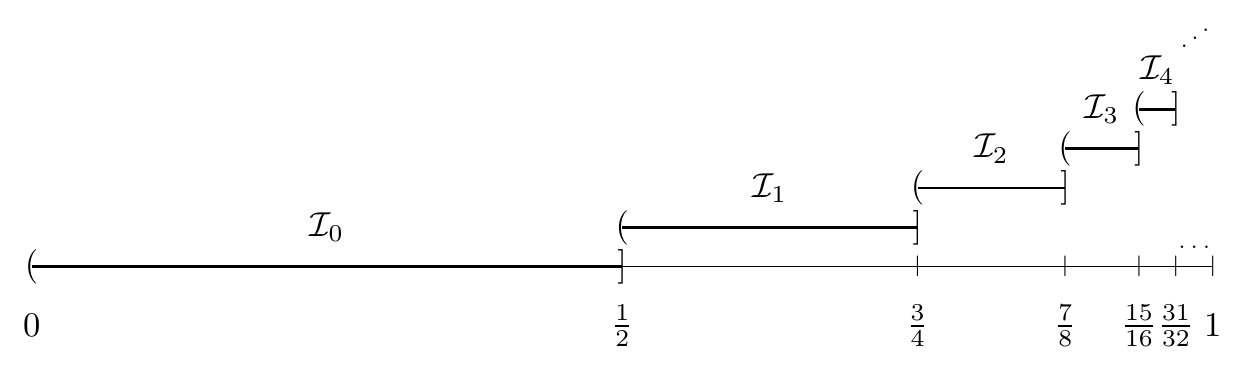}
    \caption{The intervals $\mathcal{I}_j$, for $j\geq0$.}
    \label{figureI}
    \end{figure}

Observe that $\mathcal{I}_i\cap\mathcal{I}_j=\emptyset$ for $i\neq j$ and $(0,1)=\bigcup_{j\geq0}\mathcal{I}_j.$
For an interval $I$, let $I^{*}=(1+\zeta)I\cap(0,1)$, where $\zeta=1/50$ and $(1+\zeta)I$ is the interval that has the same center as $I$, but its length is $(1+\zeta)$ times bigger.

We say that a complex valued function $a$ is an $\mathcal{L}$-atom if it either satisfies
    \begin{enumerate}
        \item[(i)] there exists an interval $I\subseteq (0,1)$ such that
            \begin{align*}
            &\supp \, a \subseteq I, \qquad \|a\|_{\8} \leq \mu(I)^{-1}, \qquad \int_I a(x)\, d\mu(x) =0,
            \end{align*}
            or
        \item[(ii)] $a(x) = \mu(\mathcal{I}_j)^{-1}\chi_{\mathcal{I}_j}(x)$, for some $j\ge0$,
    \end{enumerate}
    where $\chi_A(x)$ denotes the characteristic function of a set A.
The Hardy space $H^1_{\mathcal{L},\, \mathrm{at}}$ is given in the usual way as the set of functions $f$ that can be written as $f=\sum_{i}\lambda_ia_i$, where $\sum_i|\lambda_i|<\8$ and $a_i$ are $\mathcal{L}$-atoms. The infimum of the sums $\sum_i|\lambda_i|$ for which $f=\sum_i \lambda_i a_i$ is denoted by $\|f\|_{H^1_{\mathcal{L},\mathrm{at}}}$. The first main result of the paper is the following theorem.

\newcounter{asd}
\renewcommand{\theasd}{\Alph{asd}}
\newtheorem{thmm}[asd]{Theorem}

    \begin{thmm}[Hardy space related to $\mathcal{L}$] \label{thmA}
    Assume that $\nu > -1/2$. The Hardy spaces $H^1_{\Ll}$ and $H^1_{{\Ll},\, \mathrm{at}}$ coincide. Moreover, there exists a constant $C>1$ such that
        \begin{equation}\label{th:A}
        C^{-1}\|f\|_{H^1_{\mathcal{L}}}\le\|f\|_{H^1_{\mathcal{L},\, \mathrm{at}}}\le C\|f\|_{H^1_{\mathcal{L}}}.
        \end{equation}
    \end{thmm}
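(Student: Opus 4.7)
The plan is to prove the two inclusions separately. For the direction $H^1_{\mathcal{L},\mathrm{at}}\subseteq H^1_{\mathcal{L}}$ it suffices to establish a uniform bound $\|\mathcal{M}a\|_{L^1((0,1),\mu)}\le C$ across all $\mathcal{L}$-atoms $a$; the pointwise sublinear inequality $\mathcal{M}(\sum \lambda_i a_i)\le\sum|\lambda_i|\mathcal{M}a_i$ then yields $\|f\|_{H^1_{\mathcal{L}}}\le C\|f\|_{H^1_{\mathcal{L},\mathrm{at}}}$. For the reverse inclusion I would decompose $f$ along the partition $\{\mathcal{I}_j\}_{j\ge 0}$ and combine the natural cancellation/boundary-atom splitting with a comparison of the Bessel Poisson maximal function and a classical one at the local scale of each $\mathcal{I}_j^*$.

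For a type (i) atom $a$ supported in $I\subseteq(0,1)$ with mean zero and $\|a\|_\infty\le\mu(I)^{-1}$, I would split $\int_{(0,1)}\mathcal{M}a\,d\mu=\int_{I^*}+\int_{(0,1)\setminus I^*}$. On $I^*$ use Cauchy--Schwarz and the $L^2(\mu)$-boundedness of $\mathcal{M}$ (immediate from the spectral representation of $\mathcal{P}_t$) to bound this piece by $C\mu(I^*)^{1/2}\|a\|_{L^2(\mu)}\le C$. Outside $I^*$ exploit the mean-zero property of $a$ together with a Hölder-type smoothness estimate of $\mathcal{P}_t(x,y)$ in the $y$ variable, plus the sharp size estimate of $\mathcal{P}_t$ from Lemma~\ref{Lem:SharpNatural}, to obtain a convergent tail. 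For a type (ii) atom $a=\mu(\mathcal{I}_j)^{-1}\chi_{\mathcal{I}_j}$ proceed as above on $\mathcal{I}_j^*$; outside $\mathcal{I}_j^*$ cancellation is unavailable, but the location of $\mathcal{I}_j$ with $|\mathcal{I}_j|\sim\mathrm{dist}(\mathcal{I}_j,\{1\})\sim 2^{-j}$, combined with the Dirichlet condition forcing $\mathcal{P}_t(\cdot,1)\equiv 0$ and the sharp kernel bounds, yields a uniform-in-$j$ estimate $\int_{(0,1)\setminus\mathcal{I}_j^*}\mathcal{M}a\,d\mu\le C$.

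For the opposite direction take $f\in H^1_{\mathcal{L}}$ and split $f=\sum_j f_j$ with $f_j=f\chi_{\mathcal{I}_j}$. Setting $c_j=\mu(\mathcal{I}_j)^{-1}\int_{\mathcal{I}_j}f\,d\mu$, write $f_j=g_j+c_j\mu(\mathcal{I}_j)\cdot\bigl(\mu(\mathcal{I}_j)^{-1}\chi_{\mathcal{I}_j}\bigr)$, where $g_j:=f_j-c_j\chi_{\mathcal{I}_j}$ is supported in $\mathcal{I}_j$ with $\int g_j\,d\mu=0$. Since $\sum_j|c_j|\mu(\mathcal{I}_j)\le\|f\|_{L^1(\mu)}\le\|\mathcal{M}f\|_{L^1(\mu)}$, the type (ii) part has controlled norm. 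For the cancellation pieces $g_j$, show that on $\mathcal{I}_j^*$ the Bessel Poisson maximal function of $g_j$ is pointwise comparable, up to additive terms summable in $L^1(\mu)$ and bounded by $\mathcal{M}f$ restricted outside $\mathcal{I}_j$, to a classical local Poisson maximal function on $((0,1),|\cdot|,\mu)$. The Coifman--Weiss atomic theory on this doubling space then produces an atomic decomposition of each $g_j$ into type (i) atoms supported in subintervals of $\mathcal{I}_j^*$ with atomic norm controlled by $\|\mathcal{M}f\|_{L^1(\mathcal{I}_j^*,\mu)}$ plus the negligible tails; summing in $j$ gives $\|f\|_{H^1_{\mathcal{L},\mathrm{at}}}\le C\|\mathcal{M}f\|_{L^1(\mu)}$.

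The main obstacle is the kernel-comparison step in the reverse direction: one must quantify how the Bessel Poisson kernel $\mathcal{P}_t(x,y)$ reduces to a classical convolution-type Poisson kernel on each $\mathcal{I}_j^*$ for $t$ in an appropriate local range, with error terms that can be absorbed into the global maximal function of $f$. Both the degeneracy of $d\mu=x^{2\nu+1}dx$ at $x=0$ and the Dirichlet behavior at $x=1$ must be tracked simultaneously through the explicit series representation of $\mathcal{P}_t$ in terms of Bessel eigenfunctions, and the nonlocal interactions $\sum_{k\ne j}$ require careful bookkeeping. Once this comparison is in place the remaining summations are routine, giving the two-sided estimate \eqref{th:A}.
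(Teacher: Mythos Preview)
Your overall architecture matches the paper's: uniform maximal bounds on atoms for one inclusion, and localization along $\{\mathcal{I}_j\}$ followed by a local atomic theory for the other. The methodological differences are that the paper (a) uses a \emph{smooth} partition $\{\eta_j\}$ rather than sharp cutoffs $\chi_{\mathcal{I}_j}$, together with a commutator estimate (Lemma~\ref{lem:commutators}) to pass from $\mathcal{M}f$ to $\sup_{0<t<\mu(\mathcal{I}_j^{**})}|\mathcal{P}_t(\eta_j f)|$ on $\mathcal{I}_j^{**}$; and (b) for $j\ge 1$ it does \emph{not} compare $\mathcal{P}_t$ with a classical Poisson kernel at all, but instead verifies Uchiyama's hypotheses \eqref{UchiRep1}--\eqref{UchiRep3} directly for the restricted kernel $\mathcal{P}_t|_{\mathcal{I}_j^{**}\times\mathcal{I}_j^{**}}$ (Lemma~\ref{Prop:UchiNonLebesgue}), so that Theorem~\ref{th:Uchiyama} applies immediately. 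Your classical $L^2$-plus-cancellation treatment of type~(i) atoms is plausible away from $0$, but note that the only derivative bound available, Lemma~\ref{PropositionSixNonLeb}, carries a factor $(xy)^{-\nu-1/2}$, so the H\"older-tail estimate needs care near the origin; the paper avoids this by the Uchiyama route.

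The genuine gap is the reverse inclusion on $\mathcal{I}_0$. You propose to compare $\mathcal{P}_t$ on $\mathcal{I}_0^*$ with ``a classical local Poisson maximal function on $((0,1),|\cdot|,\mu)$'', but on $(0,1/2]$ the measure $\mu$ is not comparable to Lebesgue ($\mu(B(x,r))\sim rx^{2\nu+1}$ or $r^{2\nu+2}$ depending on the scale), there is no convolution-type Poisson kernel adapted to this geometry, and the $(xy)^{-\nu-1/2}$ blow-up in Lemma~\ref{PropositionSixNonLeb} obstructs a direct check of Uchiyama's regularity condition in the Euclidean metric near $0$. The paper's resolution---which you correctly flag as the main obstacle but do not supply---is to compare $\mathcal{P}_t$ not with a classical kernel but with the Poisson semigroup $\mathbf{P}_t$ of the Bessel operator $\mathbf{L}$ on the full half-line $(0,\infty)$, via Duhamel's formula at the level of heat semigroups (Theorem~\ref{theorem} and Section~\ref{sec:near0}), and then to import the local Hardy theory for $\mathbf{L}$ in the metric $d_\mu$ already established in \cite{Betancor-Dziubanski-Torrea} (Proposition~\ref{ppprrr}). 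Without this comparison, or an equivalent device that handles the degeneracy of $\mu$ at $0$, the argument on $\mathcal{I}_0$ does not close.
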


\subsection{Hardy space for the operator $L$}
For a function $g\in L^1(0,1)$ and $x\in(0,1)$, we consider the maximal operator $M$ as in \eqref{eq:maximalLebesgue}. This operator is well defined, see the estimates for the Poisson kernel in Lemma \ref{Lem:SharpLebesgue} below. The space $H^1_L$ is defined as the set of functions $g\in L^1(0,1)$ such that
    $$\|g\|_{H^1_L}:=\|Mg\|_{L^1(0,1)}<\8.$$

Let us define the atoms for this space. For $j\in \ZZ^* = \ZZ\setminus\{0\}$, set
    $$\mathcal{J}_j=\mathcal{I}_j \ \text{  when  } \ j\geq 1 \quad \text{ and } \quad \mathcal{J}_{j}=(2^{j-1},2^{j}] \ \text{  when  }\  j\leq -1.$$

    \begin{figure}[here]
    \includegraphics[width=0.8\textwidth]{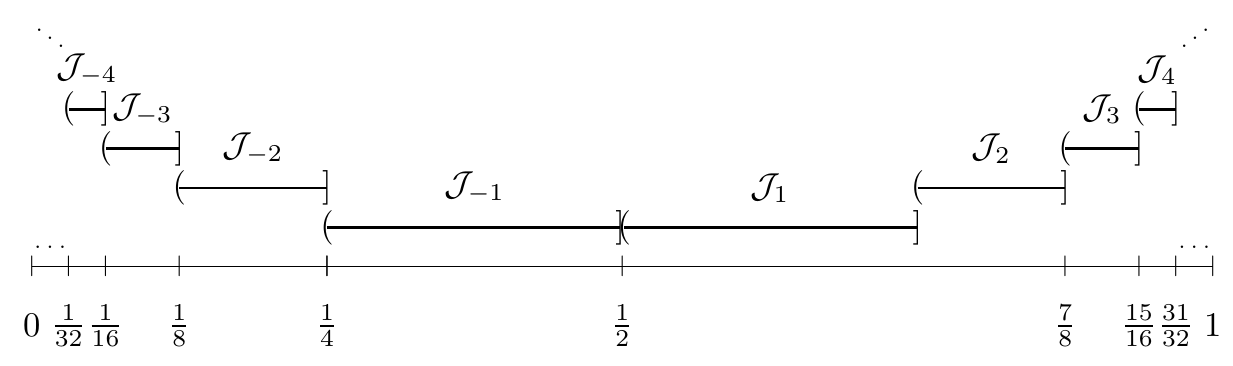}
    \caption{The intervals $\mathcal{J}_j$, for $j\in\mathbb{Z}^\ast$.}
    \label{figureJ}
    \end{figure}

\noindent
Obviously, $\mathcal{J}_i\cap\mathcal{J}_j=\emptyset$ for $i\neq j$ and $(0,1)=\bigcup_{j\in \ZZ^*}\mathcal{J}_{j}.$

A complex valued function $a$ is called an $L$-atom if  it either satisfies
    \begin{enumerate}
        \item[(i)] \label{atom1} there exists an interval $I\subseteq (0,1)$ such that
            $$\supp \, a \subseteq I, \quad \|a\|_{\8} \leq |I|^{-1},\quad  \text{and} \quad \int_I a(x)\, dx =0,$$
or
        \item[(ii)] \label{atom2} $a(x) = |\mathcal{J}_j|^{-1}\chi_{\mathcal{J}_j}(x)$, for some $j\in \ZZ^*$.
    \end{enumerate}
The atomic Hardy space $H^1_{L,\,\mathrm{at}}$ and its norm $\|\cdot\|_{H^1_{L,\,\mathrm{at}}}$ are defined as usual. The second main result of this paper is the following.

    \begin{thmm}[Hardy space related to $L$] \label{thmB}
        Assume that $\nu > -1/2$. The Hardy spaces $H^1_L$ and $H^1_{L,\,\mathrm{at}}$ coincide. Moreover, there exists a constant $C>1$ such that
$$C^{-1}\|g\|_{H^1_{L}}\le\|g\|_{H^1_{L,\,\mathrm{at}}}\le C\|g\|_{H^1_{L}}.$$
\end{thmm}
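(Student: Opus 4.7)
The proof follows the same two-inclusion template used for Theorem~A, but it must accommodate two new features: the underlying measure is Lebesgue rather than $\mu$, and the exceptional intervals $\mathcal{J}_j$ accumulate at both endpoints $0$ and $1$. The easy direction $H^1_{L,\mathrm{at}}\hookrightarrow H^1_L$ amounts to proving $\|Ma\|_{L^1(0,1)}\le C$ uniformly for $L$-atoms~$a$. For a type-(i) atom supported in an interval~$I$ one splits the integral at~$I^{*}$: on $I^{*}$ bound by the $L^2$-boundedness of $M$ together with Cauchy--Schwarz, and on the complement exploit $\int_I a=0$ and smoothness-in-$y$ of the kernel $P_t(x,y)$ derived from the sharp bounds in Lemma~\ref{Lem:SharpLebesgue}. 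For a type-(ii) atom $|\mathcal{J}_j|^{-1}\chi_{\mathcal{J}_j}$ no cancellation is available, so one integrates the estimate $\int_{\mathcal{J}_j}P_t(x,y)\,dy$ against $x$ directly; the chosen geometry of $\mathcal{J}_j$ is exactly what makes these $L^1$-norms uniformly finite in $j\in\ZZ^{*}$.

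For the reverse inclusion I would first localize: fix a smooth partition of unity $\eta_-+\eta_{\mathrm{mid}}+\eta_+\equiv 1$ on $(0,1)$ with $\supp\eta_-\subseteq(0,1/2)$, $\supp\eta_+\subseteq(1/2,1)$, and $\eta_{\mathrm{mid}}$ compactly supported in $(0,1)$, and decompose $g=\eta_- g+\eta_{\mathrm{mid}} g+\eta_+ g$. The interior piece $\eta_{\mathrm{mid}} g$ lives where $L$ is uniformly elliptic with smooth coefficients and $P_t$ is comparable to the classical Poisson kernel, so a Goldberg-style local $H^1$ decomposition produces type-(i) atoms directly. For the right-endpoint piece $\eta_+ g$, use the intertwining $P_t g(x)=x^{\nu+1/2}\mathcal{P}_t(x^{-\nu-1/2}g)(x)$: on $(1/2,1)$ the weight $x^{\nu+1/2}$ is bounded above and below and $d\mu\sim dx$, so $\|M(\eta_+ g)\|_{L^1(dx)}$ controls $\|\mathcal{M}(x^{-\nu-1/2}\eta_+ g)\|_{L^1(d\mu)}$ and consequently $x^{-\nu-1/2}\eta_+ g\in H^1_{\mathcal{L}}$; Theorem~A then supplies a decomposition into $\mathcal{L}$-atoms which, multiplied back by $x^{\nu+1/2}$ and using $\mathcal{I}_j=\mathcal{J}_j$ for $j\ge 1$, yield $L$-atoms, after a small post-processing to absorb contributions outside $\supp\eta_+$.

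The genuinely new part is the left-endpoint piece $\eta_- g$, since the exceptional atoms $|\mathcal{J}_j|^{-1}\chi_{\mathcal{J}_j}$ with $j\le-1$ have no counterpart in $H^1_{\mathcal{L}}$. For this I would carry out a Calder\'on--Zygmund decomposition at dyadic heights $2^k$ of the level set $\Omega_k=\{M(\eta_- g)>2^k\}$: the weak $(1,1)$ bound gives $|\Omega_k|\le C\,2^{-k}\|M(\eta_- g)\|_{L^1}$, and a Whitney cover of $\Omega_k$ splits $\eta_- g$ into a bounded good part plus bad pieces supported on Whitney intervals. Intervals safely inside $(0,1)$ yield mean-zero type-(i) atoms in the standard way, while intervals whose length is comparable to their distance to $\{0\}$ fall inside some $\mathcal{J}_j$, $j\le-1$, and their bad pieces split, using the mean value, as a type-(i) atom plus a multiple of the exceptional atom $|\mathcal{J}_j|^{-1}\chi_{\mathcal{J}_j}$; telescoping over $k$ yields the atomic decomposition. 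The principal obstacle, and the step where I expect to spend most of the effort, is controlling the aggregate coefficient mass of these exceptional atoms. For this one needs a pointwise lower bound $M(\eta_- g)(x)\gtrsim |\mathcal{J}_j|^{-1}\int_{\mathcal{J}_j}|\eta_- g|$ for $x\in\mathcal{J}_j$, which should come from an estimate $P_t(x,y)\gtrsim|\mathcal{J}_j|^{-1}$ valid for $x,y\in\mathcal{J}_j$ and $t\sim|\mathcal{J}_j|$---obtained by comparing $P_t$, on scales much smaller than $1$, with the Bessel Poisson kernel on $(0,\infty)$---and which, once integrated over $\mathcal{J}_j$ and summed in $j\le-1$, caps the exceptional contribution by $\|M(\eta_- g)\|_{L^1}\le\|g\|_{H^1_L}$, closing the estimate.
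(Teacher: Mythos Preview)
Your proposal takes a substantially different and more laborious route than the paper, and contains a gap in the left-endpoint argument.

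The paper's proof is \emph{simpler} than Theorem~A's, not harder: it observes that for the Lebesgue setting the kernel $K_j(t,x,y)=P_t(x,y)$ satisfies Uchiyama's conditions \eqref{UchiRep1}--\eqref{UchiRep3} uniformly on \emph{every} $\mathcal{J}_j^{**}$, $j\in\ZZ^*$, including those accumulating at $0$ (Lemma~\ref{Prop:UchiLebesgue}). The special treatment of $\mathcal{I}_0^{**}$ in Theorem~A was forced by the measure $\mu$, which is not comparable to arc length near $0$; with $dx$ this obstacle disappears. The paper then runs exactly the scheme you may recognize from $j\ge1$ in Theorem~A: a partition of unity $\{\widetilde\eta_j\}_{j\in\ZZ^*}$ subordinate to $\{\mathcal{J}_j^*\}$, a commutator estimate $\sum_j\|\sup_t|(\widetilde\eta_j(x)-\widetilde\eta_j(y))P_t(x,y)|\|_{L^1(dx)}\le C$ (Lemma~\ref{lem:commutatorsLebesgue}), and Uchiyama's theorem on each $\mathcal{J}_j^{**}$ (Corollary~\ref{th:UchiLeb}). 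No reduction to Theorem~A, no Goldberg space, no Calder\'on--Zygmund decomposition.

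Your three-piece localization is not wrong in spirit, but two steps are unresolved. First, routing $\eta_+ g$ through Theorem~A produces $\mathcal{L}$-atoms that may be supported anywhere in $\mathcal{I}_0^{**}\subset(0,1)$, in particular near $0$; multiplying such an atom by $x^{\nu+1/2}$ does \emph{not} yield an $L$-atom because $\mu(I)$ and $|I|$ are not comparable there, so the ``small post-processing'' is not small. Second, and more seriously, the pointwise lower bound $M(\eta_- g)(x)\gtrsim |\mathcal{J}_j|^{-1}\int_{\mathcal{J}_j}|\eta_- g|$ you invoke does not follow from a kernel lower bound: $P_t(x,y)\gtrsim|\mathcal{J}_j|^{-1}$ only gives $|P_t(\eta_- g)(x)|$ bounded below by $|\mathcal{J}_j|^{-1}\bigl|\int_{\mathcal{J}_j}\eta_- g\bigr|$, with the absolute value \emph{outside} the integral. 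Controlling the aggregate mass of the exceptional coefficients therefore needs a different mechanism (e.g.\ the trivial $\|g\|_{L^1}\le\|Mg\|_{L^1}$ together with careful bookkeeping in the CZ telescoping), which you have not supplied. The paper avoids all of this by invoking Uchiyama directly.
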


For the theory of the classical real Hardy spaces on $\Rn$ and their equivalent characterizations we refer the reader to \cite{Coifman, Fefferman-Stein, Goldberg, Latter, SteinWeiss-Acta}. See also \cite{Stein} and references therein.
Harmonic Analysis in the setting of Fourier--Bessel expansions is being developed in the last years, see \cite{Benedek-Panzone-negative, Benedek-Panzone, Ciaurri-Roncal-Bochner, Ciaurri-Roncal-g, Ciaurri-Roncal, Ciaurri-Stempak-Trans, Ciaurri-Stempak-Anal, Ciaurri-Stempak,  NR, Nowak-Roncal-Sharp,Nowak-Roncal}.

Note that our Hardy spaces are defined in terms of maximal functions given by Poisson integrals. It is natural to ask for another characterizations of $H^1_L$ and $H^1_{\Ll}$ analogous to those from the classical theory. Another interesting task is to study the Hardy spaces $H^p_{\mathcal{L}}$ and $H^p_{L}$ for $0<p<1$. We will not pursue these issues in this paper.

The ideas to prove our two main results are the following. For the Hardy space $H^1_{\mathcal{L}}$, first, we take an $\mathcal{L}$-atom $a$ and we prove that $\mathcal{M}a$ is in $L^1((0,1),\mu)$ by using precise kernel estimates and by applying a result by Uchiyama (see Theorem \ref{th:Uchiyama} below) in the intervals $\mathcal{I}_j^{**}$, $j\ge1$. For the converse, we start by decomposing a function in $H^1_{\mathcal{L}}$ by using a partition of unity with functions supported in the intervals $\mathcal{I}_j^*$. Then, again suitable kernel estimates are applied, together with the fact that the operators localized in $\mathcal{I}_j$, for $j\ge1$, fall into Uchiyama's theory. The subtle point is the analysis on the interval $\mathcal{I}_0$. Our idea to tackle this case is to compare the Poisson semigroup $\mathcal{P}_t$ with the Poisson semigroup for the Bessel operator on $(0,\infty)$ by using Duhamel's principle. For the case of the operator $L$ the ideas are analogous but simpler, because Uchiyama's result works well in all the intervals $\mathcal{J}_j$.

The paper is organized as follows. The definitions of the orthogonal systems that we study are in Section \ref{sec3}, as well as the necessary estimates for the related Poisson kernels. In Section \ref{sec2} we briefly recall the notion of local Hardy spaces on spaces of homogeneous type and we state the theorem by Uchiyama, which is one of the main tools in the proofs of Theorems \ref{thmA} and \ref{thmB}.  In Section \ref{sec:near0} we compare the Poisson semigroups in the discrete and continuous Bessel settings by using Duhamel's formula and the results of \cite{Betancor-Dziubanski-Torrea}. Finally, in Sections \ref{sec4} and \ref{sec5} we prove Theorems \ref{thmA} and \ref{thmB}, respectively.

%%%%  Section 2
\section{Auxiliary estimates}\label{sec3}

\subsection{The orthogonal expansions}\label{ortt}

For $\nu>-1$, let $\{\lambda_{n,\nu}\}_{n\ge1}$ denote the sequence of successive positive zeros of the Bessel function $J_{\nu}$ and consider
    $$\phi_n^{\nu} (x) = d_{n,\nu} \lambda_{n,\nu}^{1/2} J_\nu (\lambda_{n,\nu}  x)
    x^{-\nu},\qquad\psi_n^{\nu} (x) = x^{\nu+1/2} \phi_n^{\nu}(x),$$
where $x\in (0,1)$ and $d_{n,\nu}=\sqrt2{|\lambda_{n,\nu}J_{\nu+1}(\lambda_{n,\nu})|}^{-1}$. It is well known that the systems $\{\phi_n^{\nu}\}_{n=1}^\8$ and $\{\psi_n^{\nu}\}_{n=1}^\8$ form complete orthonormal bases of $L^2((0,1),\mu)$ and $L^2(0,1)$, respectively.

The functions $\phi_n^{\nu}$ and $\psi_n^{\nu}$ are eigenfunctions of the differential operators \eqref{eq:LRedonda} and \eqref{eq:LCuadrado}, namely,

    \begin{equation*}
    \mathcal{L}\phi_n^{\nu}(x)=\lambda_{n,\nu}^2\phi_n^{\nu}(x), \qquad  L \psi_n^{\nu} (x)=\lambda_{n,\nu}^2\psi_n^{\nu} (x).
    \end{equation*}

The operators $\mc L$ and $ L$ posses natural self-adjoint extensions, that for convenience we still denote by $\mc L$ and $L$, with domains
    \begin{align*}
    &\operatorname{Dom}(\mc L) = \Big\{f\in L^2 ((0,1), \mu) \ : \ \sum_{n=1}^\8 \lambda_{n,\nu}^4 |\langle f, \phi_n^\nu \rangle_{\mu}|^2 <\8\Big\},\\
    &\operatorname{Dom}(L) = \Big\{f\in L^2 (0,1) \ : \ \sum_{n=1}^\8 \lambda_{n,\nu}^4 |\langle f, \psi_n^\nu \rangle|^2 <\8\Big\}.
    \end{align*}
Obviously, $\overline{\operatorname{Dom}(\mc L)} = L^2 ((0,1), \mu)$ and $\overline{\operatorname{Dom}(L)}=L^2 (0,1)$. From now on by using $\mc L$ or $L$ we always mean the self-adjoint extensions. In Lemmas \ref{domain1}, \ref{domain2} and \ref{lee3} we provide arguments that for some functions $f$ the images $\Ll f$ and $Lf$ can be obtained by simply applying the differential formulas \eqref{eq:LRedonda} and \eqref{eq:LCuadrado}.

For $t>0$ and $x\in(0,1)$, the Poisson integrals related to ${\Ll}$ and $L$ are defined by
    \begin{align}\label{Poissone_formula}
    &\mathcal{P}_tf(x)= e^{-t\sqrt{\mathcal{L}}}f(x)=\int_0^1\mathcal{P}_t(x,y)f(y)\,d\mu(y),
    &P_t g(x)= e^{-t\sqrt{L}}g(x)=\int_0^1P_t(x,y)g(y)\,dy,
    \end{align}
where $f\in L^2((0,1),\mu)$, $g\in L^2(0,1)$, and the corresponding Poisson kernels are
    \begin{equation}\label{Poisson kernel}
    \mathcal{P}_t(x,y) = \sum_{n=1}^{\infty} e^{-t\lambda_{n, \nu}} \phi_{n}^{\nu}(x)\phi_{n}^{\nu}(y), \qquad P_t(x,y)= (xy)^{\nu+1/2} \mathcal{P}_t(x,y).
    \end{equation}

 We shall use some basic facts about Bessel functions that we collect here. We refer the reader to \cite{Lebedev, Watson} for details. The Bessel function $J_\nu$ satisfies:

\begin{align}
\label{eq:deriv}
&\frac{d}{dx}\(x^{-\nu}J_\nu(x)\)= - x^{-\nu} J_{\nu+1}(x),&&\hbox{for}~x\in\mathbb{R},\\
\label{eq:asymp_zero}
&J_\nu(x)=O(x^\nu), &&\hbox{for}~|x|\to 0,\\
\label{eq:asymp_inf}
&J_\nu(x)=\sqrt{2}(\pi x)^{-1/2}(\cos(x+D_\nu)+O(x^{-1})), &&\hbox{for}~|x|\to\infty,
\end{align}
where $D_\nu=-(\nu\pi/2+\pi/4)$. Moreover,
\begin{equation} \label{eq:zero-cte}
\lambda_{n,\nu}=O(n), \qquad d_{n,\nu}=O(1).
\end{equation}

Let us notice that the operator $L$ can be decomposed as $L = \delta_L^* \delta_L$, where
    \begin{equation}\label{deltaL}
    \delta_L = -\frac{d}{dx}+\frac{\nu+1/2}{x},% \quad  \textrm{and} \quad
    %\delta_L^\ast=\frac{d}{dx}+\frac{\nu+1/2}{x}.
    \end{equation}
and $\delta^*_L=\frac{d}{dx}+\frac{\nu+1/2}{x}$ is the formal adjoint of $\delta_L$ in $L^2(0,1)$.

\subsection{Estimates for the Poisson kernel $\Pp_t(x,y)$}

The following estimates were proved in \cite{Nowak-Roncal}.
    \begin{lem}\label{Lem:SharpNatural}
    For $t\leq 1$ we have
        \begin{equation*}
            \mathcal{P}_t(x,y) \simeq \(\frac{1}{t^2+x^2+y^2}\)^{\nu+1/2}\(\frac{(1-x)(1-y)}{t^2+(1-x)^2+(1-y)^2}\)\frac{t}{t^2+|x-y|^2},
        \end{equation*}
    and for $t>1$ we have
    $$\mathcal{P}_t(x,y) \simeq (1-x)(1-y) e^{-t\lambda_{1,\nu}}.$$
    \end{lem}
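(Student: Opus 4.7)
I would split into the two regimes $t>1$ and $t\le 1$ and treat them by different methods: direct spectral truncation for large $t$, and a subordination argument based on sharp heat kernel bounds for small $t$.

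\textbf{Large $t$.} For $t > 1$, I would separate the first spectral term:
$$\mathcal{P}_t(x,y) = e^{-t\lambda_{1,\nu}}\phi_1^\nu(x)\phi_1^\nu(y) + \sum_{n \ge 2} e^{-t\lambda_{n,\nu}}\phi_n^\nu(x)\phi_n^\nu(y).$$
Combining the uniform pointwise bound $|\phi_n^\nu(x)| \le C\min\bigl(\lambda_{n,\nu}^{\nu+1/2}, x^{-\nu-1/2}\bigr)$, which follows from \eqref{eq:asymp_zero}, \eqref{eq:asymp_inf} and \eqref{eq:zero-cte}, with the fact that consecutive Bessel zeros are separated uniformly from below, the remainder is dominated by a constant multiple of $e^{-t\lambda_{1,\nu}}e^{-ct}$ for some $c>0$, hence negligible when $t>1$. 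Since $\phi_1^\nu$ is smooth and strictly positive on $[0,1)$ and vanishes simply at $x=1$, one has $\phi_1^\nu(x) \simeq 1-x$ uniformly on $[0,1]$, which yields the stated large-time estimate.

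\textbf{Small $t$.} For $t \le 1$ I would pass through the heat kernel via subordination:
$$\mathcal{P}_t(x,y) = \frac{t}{2\sqrt{\pi}}\int_0^\infty \frac{e^{-t^2/(4s)}}{s^{3/2}}\,W_s(x,y)\,ds, \qquad W_s(x,y) = \sum_{n\ge 1} e^{-s\lambda_{n,\nu}^2}\phi_n^\nu(x)\phi_n^\nu(y).$$
The crucial ingredient is the sharp two-sided estimate
$$W_s(x,y) \simeq \frac{1}{(s+x^2+y^2)^{\nu+1/2}} \cdot \frac{(1-x)(1-y)}{s+(1-x)^2+(1-y)^2} \cdot \frac{1}{\sqrt s}\exp\!\bigl(-c|x-y|^2/s\bigr),$$
valid for $0 < s \le 1$. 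Here the first factor reflects the Bessel heat kernel on $(0,\infty)$ (explicit via Macdonald's formula for $I_\nu$), the middle factor encodes the Dirichlet decay at $x=1$, and the last is the usual Gaussian along the diagonal. Given this bound, the statement of the lemma is recovered by partitioning the $s$-integral according to which of $t^2$, $x^2+y^2$, $(1-x)^2+(1-y)^2$ and $|x-y|^2$ dominates $s$, and evaluating the elementary integrals $\int_0^\infty s^{-\alpha}e^{-A/s - Bs}\,ds$ in each regime.

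\textbf{Main obstacle.} The delicate step is securing the two-sided heat kernel estimate, and in particular producing the Dirichlet correction factor near $x = y = 1$ in both upper and lower directions. One natural route is to compare $W_s$ with the Bessel heat kernel on $(0,\infty)$, for which sharp two-sided bounds are classical, and to introduce a reflection-type correction across $x = 1$ that tracks the leading cancellation. An alternative is to split the spectral sum at $n$ near $1/\sqrt{s}$, handling the high-frequency tail via the oscillatory asymptotics \eqref{eq:asymp_inf} of $J_\nu$ and the low-frequency part by direct power-series estimates in a boundary layer of width $\sqrt{s}$ near $x = 1$. Either route requires careful bookkeeping at the endpoint $x=1$; once the heat kernel bound is in place, the subordination computation and the resulting Poisson estimate are routine.
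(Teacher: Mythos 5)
First, note that the paper does not actually prove Lemma \ref{Lem:SharpNatural}: it is quoted verbatim from \cite{Nowak-Roncal}, where it is derived (essentially by the route you propose) from the sharp two-sided heat kernel bounds of \cite{Nowak-Roncal-Sharp} via subordination. So your strategy is the ``right'' one in the sense that it reconstructs the argument of the cited reference; but it also means that the entire weight of the small-$t$ case rests on the two-sided heat kernel estimate, which you correctly identify as the main obstacle and then leave as a sketch. That estimate --- in particular the matching \emph{lower} bound carrying the boundary factor $(1-x)(1-y)$ --- is the real content here and is not routine; deferring it is acceptable only if you are prepared to cite \cite{Nowak-Roncal-Sharp} for it, in which case you may as well cite \cite{Nowak-Roncal} for the lemma itself. (Minor point: your middle factor $(1-x)(1-y)/(s+(1-x)^2+(1-y)^2)$ and the more usual $1\wedge\frac{(1-x)(1-y)}{s}$ are only equivalent after absorbing discrepancies into the Gaussian factor, using $(1-x)^2+(1-y)^2=(x-y)^2+2(1-x)(1-y)$; this should be said.)

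There is one concrete gap in the large-$t$ argument. You bound the tail $\sum_{n\ge2}e^{-t\lambda_{n,\nu}}\phi_n^\nu(x)\phi_n^\nu(y)$ by $Ce^{-t\lambda_{1,\nu}}e^{-ct}$ using only the uniform bound $|\phi_n^\nu(x)|\le C\min(\lambda_{n,\nu}^{\nu+1/2},x^{-\nu-1/2})$. This does \emph{not} suffice to conclude $\mathcal{P}_t(x,y)\simeq(1-x)(1-y)e^{-t\lambda_{1,\nu}}$: when $x$ or $y$ is close to $1$, the main term $(1-x)(1-y)e^{-t\lambda_{1,\nu}}$ is much smaller than your tail bound (which does not vanish at the boundary), so the lower bound fails to follow. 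You need the tail itself to carry the factor $(1-x)(1-y)$. This is obtainable: since $\phi_n^\nu(1)=0$ and, by \eqref{eq:deriv}, \eqref{eq:asymp_inf} and \eqref{eq:zero-cte}, $\bigl|\frac{d}{dx}\phi_n^\nu(x)\bigr|\le C\lambda_{n,\nu}$ for $x$ bounded away from $0$, one gets $|\phi_n^\nu(x)|\le Cn(1-x)$ there (and the uniform bound handles $x$ near $0$, where $1-x\simeq1$). With this, the tail is at most $C(1-x)(1-y)e^{-t\lambda_{2,\nu}}\sum_{n\ge2}n^2e^{-(t-1)c n}$, which for $t>1$ is $o\bigl((1-x)(1-y)e^{-t\lambda_{1,\nu}}\bigr)$ since $\lambda_{2,\nu}>\lambda_{1,\nu}$, and the two-sided estimate follows from $\phi_1^\nu(x)\simeq1-x$ as you say. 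With that repair the large-$t$ case is complete; the small-$t$ case remains contingent on the heat kernel input.
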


We shall also need the estimate on $\frac{\partial}{\partial x}\mathcal{P}_t(x,y)$ contained in Lemma \ref{PropositionSixNonLeb}. The result readily follows from Lemma \ref{Lem:PropositionSix} below via the relation $(xy)^{\nu+1/2} \frac{\partial}{\partial x}\mathcal{P}_t(x,y) = \delta_L P_t(x,y)$, see \eqref{Poisson kernel}, \eqref{eq:deriv}, and \eqref{deltaL}.

    \begin{lem}\label{PropositionSixNonLeb}
    There exists a constant $C$ such that for $t>0,~x,y\in(0,1)$ we have
    $$\Big|\frac{\partial}{\partial x}\mathcal{P}_t(x,y)\Big| \leq C  \frac{(xy)^{-\nu-1/2}}{t^2+|x-y|^{2}}.$$
    \end{lem}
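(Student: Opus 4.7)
The plan is to reduce the estimate for $\partial_x \mathcal{P}_t(x,y)$ to the estimate on the "first-order" object $\delta_L P_t(x,y)$ that, per the remark following the lemma, is stated in Lemma~\ref{Lem:PropositionSix}. The bridge between the two is the elementary identity
\begin{equation*}
(xy)^{\nu+1/2}\,\frac{\partial}{\partial x}\mathcal{P}_t(x,y) \;=\; \pm\,\delta_L P_t(x,y),
\end{equation*}
which comes directly from the factorization $P_t(x,y)=(xy)^{\nu+1/2}\mathcal{P}_t(x,y)$ together with the definition $\delta_L=-\tfrac{d}{dx}+\tfrac{\nu+1/2}{x}$, so my first step is to verify it.

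To check the identity I would differentiate $P_t(x,y)=x^{\nu+1/2}y^{\nu+1/2}\mathcal{P}_t(x,y)$ in $x$:
\begin{equation*}
\frac{\partial}{\partial x}P_t(x,y)=\Big(\nu+\tfrac12\Big)x^{\nu-1/2}y^{\nu+1/2}\mathcal{P}_t(x,y)+(xy)^{\nu+1/2}\frac{\partial}{\partial x}\mathcal{P}_t(x,y).
\end{equation*}
Inserting this into $\delta_L P_t=-\partial_x P_t+\tfrac{\nu+1/2}{x}P_t$, the two terms involving $(\nu+1/2)\,x^{\nu-1/2}y^{\nu+1/2}\mathcal{P}_t(x,y)$ cancel, leaving
\begin{equation*}
\delta_L P_t(x,y)\;=\;-(xy)^{\nu+1/2}\,\frac{\partial}{\partial x}\mathcal{P}_t(x,y),
\end{equation*}
which is the desired identity (the sign is irrelevant for the final estimate since we work in absolute value).

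Once the identity is in place, I would invoke Lemma~\ref{Lem:PropositionSix}, which (by design of the remark) supplies the bound
\begin{equation*}
\bigl|\delta_L P_t(x,y)\bigr|\;\le\;\frac{C}{t^{2}+|x-y|^{2}}.
\end{equation*}
Dividing by $(xy)^{\nu+1/2}>0$ and using the identity above produces exactly
\begin{equation*}
\Big|\tfrac{\partial}{\partial x}\mathcal{P}_t(x,y)\Big|\;\le\;C\,\frac{(xy)^{-\nu-1/2}}{t^{2}+|x-y|^{2}}.
\end{equation*}
Thus no obstacle remains at this stage of the argument: the only real content of this lemma is the algebraic conjugation between $\mathcal{L}$ and $L$. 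The substantive work, namely establishing the $(t^2+|x-y|^2)^{-1}$ kernel bound for $\delta_L P_t$, is deferred to Lemma~\ref{Lem:PropositionSix}, where it must be carried out using the Bessel asymptotics \eqref{eq:asymp_zero}, \eqref{eq:asymp_inf}, the derivative formula \eqref{eq:deriv} (which lets $\delta_L$ act termwise on the series \eqref{Poisson kernel} by turning $\psi_n^\nu$ into a $\psi$-type series with index $\nu+1$), and a subordination-type or direct partial-sum argument matching the sharp estimates of \cite{Nowak-Roncal}; that is where the technical difficulty actually lives.
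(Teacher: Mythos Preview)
Your proposal is correct and matches the paper's approach exactly: the paper states (just before the lemma) that the result ``readily follows from Lemma~\ref{Lem:PropositionSix} below via the relation $(xy)^{\nu+1/2}\frac{\partial}{\partial x}\mathcal{P}_t(x,y)=\delta_L P_t(x,y)$,'' which is precisely the conjugation identity you verified (your sign is actually the correct one, but as you note this is immaterial for the absolute-value estimate). The substantive analytic work is indeed deferred to Lemma~\ref{Lem:PropositionSix}.
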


\subsection{Estimates for the Poisson kernel $P_t(x,y)$}

Recall that $P_t(x,y)$ and $\Pp_t(x,y)$ are related by \eqref{Poisson kernel}. Thus, the following lemma is an immediate consequence of Lemma \ref{Lem:SharpNatural} (see also {\cite{Nowak-Roncal}}).

    \begin{lem}
    \label{Lem:SharpLebesgue}
    For $t\le1$ we have
    \begin{equation*}
        P_t(x,y) \simeq \Big(\frac{xy}{t^2+x^2+y^2}\Big)^{\nu+1/2}\Big(\frac{(1-x)(1-y)}{t^2+(1-x)^2+(1-y)^2}\Big)\frac{t}{t^2+|x-y|^2},
    \end{equation*}
    and for $t>1$ we have
        $$
        P_t(x,y) \simeq (xy)^{\nu+1/2} (1-x)(1-y) e^{-t\lambda_{1,\nu}}.
        $$
    %uniformly in $t>0$ and $x,y\in(0,1)$.
    \end{lem}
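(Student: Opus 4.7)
The proof should be essentially a one-line verification, since the paper explicitly states the lemma is an immediate consequence of Lemma \ref{Lem:SharpNatural}. The plan is to invoke the kernel identity from \eqref{Poisson kernel}, namely
\[
P_t(x,y) = (xy)^{\nu+1/2}\,\Pp_t(x,y),
\]
and then multiply the two-sided pointwise estimates for $\Pp_t(x,y)$ from Lemma \ref{Lem:SharpNatural} by the positive factor $(xy)^{\nu+1/2}$. Since both inequalities in the $\simeq$ relation are preserved under multiplication by a positive quantity independent of $t$, the resulting bounds are again two-sided with new constants of the same order.

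In more detail: for the regime $t\le 1$, I would take the estimate
\[
\Pp_t(x,y) \simeq \Big(\frac{1}{t^2+x^2+y^2}\Big)^{\nu+1/2}\Big(\frac{(1-x)(1-y)}{t^2+(1-x)^2+(1-y)^2}\Big)\frac{t}{t^2+|x-y|^2}
\]
from Lemma \ref{Lem:SharpNatural} and simply absorb $(xy)^{\nu+1/2}$ into the first factor, turning $1/(t^2+x^2+y^2)^{\nu+1/2}$ into $(xy/(t^2+x^2+y^2))^{\nu+1/2}$. For the regime $t>1$, I would multiply the estimate $\Pp_t(x,y)\simeq (1-x)(1-y)e^{-t\lambda_{1,\nu}}$ by $(xy)^{\nu+1/2}$ to obtain the second claim.

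There is no genuine obstacle here; the only thing to check is that the identity $P_t(x,y)=(xy)^{\nu+1/2}\Pp_t(x,y)$ is indeed valid, which is a direct consequence of the definitions in \eqref{Poisson kernel} and the relation $\psi_n^{\nu}(x) = x^{\nu+1/2}\phi_n^{\nu}(x)$ given in Subsection \ref{ortt}. Therefore the proof reduces to an algebraic rearrangement and does not require any new analytic input beyond Lemma \ref{Lem:SharpNatural}.
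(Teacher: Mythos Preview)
Your proposal is correct and matches the paper's approach exactly: the paper states that the lemma is an immediate consequence of Lemma~\ref{Lem:SharpNatural} via the relation $P_t(x,y)=(xy)^{\nu+1/2}\Pp_t(x,y)$ from \eqref{Poisson kernel}, and gives no further argument. Your write-up simply makes this multiplication explicit, which is all that is needed.
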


Now we prove the estimate for $\delta_LP_t(x,y)$.

    \begin{lem}\label{Lem:PropositionSix}
    There exists a constant $C$ such that
        \begin{equation*}
         \, |\delta_LP_t(x,y)|\leq C \frac{1}{t^2+|x-y|^{2}}.
        \end{equation*}
    \end{lem}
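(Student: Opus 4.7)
The plan is to obtain the estimate through heat-kernel derivative bounds combined with subordination. First, I would write
$$P_t(x,y) = \frac{t}{2\sqrt{\pi}}\int_0^\infty s^{-3/2}e^{-t^2/(4s)}W_s(x,y)\,ds,$$
where $W_s = e^{-sL}$ is the heat semigroup kernel. Interchanging $\delta_L$ with the integral reduces the claim to establishing the Gaussian-type derivative bound
$$|\delta_L W_s(x,y)| \leq Cs^{-1}\exp\bigl(-c(x-y)^2/s\bigr),\qquad x,y\in(0,1),~s>0,$$
since then the standard subordination calculation
$$\int_0^\infty ts^{-5/2}e^{-t^2/(4s)-c(x-y)^2/s}\,ds \leq \frac{C}{t^2+(x-y)^2}$$
finishes the argument.

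To establish the heat-kernel derivative bound I would first compute, using the Bessel recurrence \eqref{eq:deriv}, the clean identity
$$\delta_L \psi_n^\nu(x) = d_{n,\nu}\lambda_{n,\nu}^{3/2}x^{1/2}J_{\nu+1}(\lambda_{n,\nu}x),$$
which converts the spectral expansion of $\delta_L W_s(x,y)$ into a mixed Bessel series with index $\nu+1$ in $x$ and $\nu$ in $y$. I would then compare this series to its continuous Hankel-transform analog on $(0,\infty)$, where the Bessel heat kernel has the closed form involving the modified Bessel function $I_\nu$ arising from Weber's second exponential integral. On the half-line the desired Gaussian derivative bound is classical and available in \cite{Betancor-Dziubanski-Torrea}; the passage from $(0,\infty)$ to $(0,1)$ produces a boundary correction coming from the Dirichlet condition at $x=1$, which is smooth and satisfies a strictly better bound.

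The main obstacle is precisely this transfer step. The sharp two-sided pointwise estimates in Lemma \ref{Lem:SharpLebesgue} cannot be differentiated directly, so one cannot simply invoke them. Instead, the bound on $\delta_L W_s(x,y)$ (equivalently $\delta_L P_t(x,y)$) must be built from the explicit half-line kernel via a Duhamel-type comparison argument, in the spirit of the methodology announced for Section \ref{sec:near0}. An alternative, likely closer to the authors' approach, is to replay the derivation of the pointwise estimates of \cite{Nowak-Roncal} while carrying one $\delta_L$-derivative throughout: thanks to the identity above, $\delta_L$ acts on each spectral term as a shift of the Bessel index, so the same Hille--Hardy-type manipulations used to sum the series for $P_t$ apply here, and each occurrence of the weight $(xy)^{\nu+1/2}$ is exactly the factor that would otherwise appear in the final bound, canceling against $\delta_L$ and producing the clean right-hand side $C/(t^2+|x-y|^2)$.
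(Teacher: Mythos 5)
Your opening identity $\delta_L\psi_n^\nu(x)=d_{n,\nu}\lambda_{n,\nu}^{3/2}x^{1/2}J_{\nu+1}(\lambda_{n,\nu}x)$ is correct (it is exactly how the paper arrives at \eqref{delta Pt}), and the subordination reduction would indeed convert a bound $|\delta_L W_s(x,y)|\leq Cs^{-1}e^{-c(x-y)^2/s}$ into the desired estimate. But that heat-kernel derivative bound is precisely the content of the lemma in disguise, and you never actually establish it. Your proposed transfer from the half-line kernel of \cite{Betancor-Dziubanski-Torrea} breaks down at the endpoint $x=1$: the half-line Bessel heat kernel sees no Dirichlet condition there, so for $x,y$ near $1$ and moderate $s$ the boundary correction is of the \emph{same} order $s^{-1/2}$ as the kernel itself, not ``strictly better.'' This is exactly why the Duhamel comparison the authors do carry out in Section \ref{sec:near0} is localized to $\mathcal{I}_0^{**}$ by a cutoff $\rho$ supported away from $x=1$; a global comparison on $(0,1)$ would require a second, separate boundary analysis near $x=1$ that you do not supply. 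Your fallback --- ``replay the derivation of \cite{Nowak-Roncal} carrying one derivative'' --- is a statement of intent rather than an argument.

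The paper's actual proof is different and much more elementary: it works directly on the spectral series \eqref{delta Pt} for the Poisson (not heat) kernel. For $|x-y|>t$ it quotes a ready-made oscillatory-sum estimate (\cite[Proposition 6]{Ciaurri-Roncal} with $a=2$, $b=\nu+1$, $c=\nu$). For $|x-y|\leq t$ it splits the sum at $N_1=\lfloor 1/x\rfloor$ and $N_2=\lfloor 1/y\rfloor$ and sums each piece using only the crude asymptotics $J_\nu(u)=O(u^\nu)$ near $0$, $J_\nu(u)=O(u^{-1/2})$ at infinity, $\lambda_{n,\nu}=O(n)$ and $d_{n,\nu}=O(1)$, in three cases according to the relative sizes of $x$ and $y$. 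No Gaussian bounds, no subordination, and no comparison with the half-line model are needed. To salvage your approach you would have to either prove the interval heat-kernel derivative bound from scratch (including the regime near $x=1$) or localize the half-line comparison and handle the endpoint $x=1$ by a separate argument; as written, the proposal has a genuine gap at its central step.
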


    \begin{proof}
    From \eqref{Poisson kernel} and \eqref{eq:deriv} we have
        \begin{equation}\label{delta Pt}
        \delta_L P_t(x,y)=\sum_{n=1}^\infty e^{-t\lambda_{n,\nu}}d_{n,\nu}^2\lambda_{n,\nu}^2(xy)^{1/2}J_{\nu+1}(\lambda_{n,\nu}x)J_{\nu}(\lambda_{n,\nu}y).
        \end{equation}
    For $|x-y|>t$ the conclusion follows by taking $a=2$, $b=\nu+1$ and $c=\nu$ in \cite[Proposition 6]{Ciaurri-Roncal}.

    Assume now that $|x-y|\leq t$. Let $N_1\leq N_2$ be positive integers that will be fixed later on. We split the sum in \eqref{delta Pt} into three parts
        \begin{equation*}
        \delta_LP_t(x,y)=\sum_{n=1}^{N_1}~+\sum_{n=N_1+1}^{N_2}~+\sum_{N_2+1}^\infty~=:\Sigma_0+\Sigma_1+\Sigma_2.
        \end{equation*}
    We finish the proof by considering three cases.

    \noindent\textbf{Case 1.} Assume that $0<y<x/2$. Then, $t>|x-y|\simeq x$. Take $N_1=\lfloor 1/x\rfloor$ and $N_2 = \lfloor 1/y \rfloor$. By using \eqref{eq:asymp_zero}, \eqref{eq:asymp_inf}, and \eqref{eq:zero-cte},
    \begin{align*}
        |\Sigma_0| &\leq C(xy)^{1/2}\sum_{n=1}^{N_1}e^{-ctn}n^2(nx)^{\nu+1}(ny)^\nu\leq C\frac{x^{2\nu+2}}{t^{2\nu+3}}\sum_{n=1}^{N_1}e^{-ctn}(tn)^{2\nu+3} \\
         &\leq \frac{C}{t}\sum_{n=1}^\infty e^{-c'tn} = \frac{Ce^{-c't}}{t(1-e^{-c't})}\leq \frac{C}{t^2},\\
        |\Sigma_1| &\leq C(xy)^{1/2}\sum_{n=N_1+1}^{N_2}e^{-ctn}n^2(nx)^{-1/2}(ny)^\nu\leq C\frac{y^{\nu+1/2}}{t^{\nu+3/2}}\sum_{n=N_1+1}^{N_2}e^{-ctn}(tn)^{\nu+3/2} \\
         &\leq \frac{C}{t}\sum_{n=1}^\infty e^{-c'tn} = \frac{Ce^{-c't}}{t(1-e^{-c't})}\leq \frac{C}{t^2},\\
    |\Sigma_2|&\leq C(xy)^{1/2}\sum_{n=N_2+1}^\infty e^{-ctn}n^2(nx)^{-1/2}(ny)^{-1/2}\leq \frac{C}{t}\sum_{n=N_2+1}^{\infty}e^{-ctn}(tn)\leq\frac{C}{t^2}.
    \end{align*}

    \noindent\textbf{Case 2.} Assume that $x/2\leq y\leq2x$. Here $|x-y|<x \simeq y$. We take $N_1=N_2=\lfloor 1/x\rfloor$. We will consider first the subcase when $t>x$. From \eqref{eq:asymp_zero}, \eqref{eq:asymp_inf}, and \eqref{eq:zero-cte},
    \begin{align*}
    |\Sigma_0|&\leq C(xy)^{1/2}\sum_{n=1}^{N_1}e^{-ctn}n^2(nx)^{\nu+1}(ny)^\nu\leq C\frac{x^{2\nu+2}}{t^{2\nu+3}}\sum_{n=1}^{N_1}e^{-ctn}(tn)^{2\nu+3}\leq \frac{C}{t^2},\\
    |\Sigma_2|&\leq C(xy)^{1/2}\sum_{n=N_1+1}^\infty e^{-ctn}n^2(nx)^{-1/2}(ny)^{-1/2}=\frac{C}{t}\sum_{n=N_1+1}^\infty e^{-ctn}(tn)\leq\frac{C}{t^2}.
    \end{align*}
    Consider now the second subcase, that is, when $|x-y|<t\leq x$. Again from \eqref{eq:asymp_zero} and \eqref{eq:zero-cte},
    \begin{align*}
        |\Sigma_0| &\leq C(xy)^{1/2}\sum_{n=1}^{N_1}e^{-ctn}n^2(nx)^{\nu+1}(ny)^\nu\leq Cx^{2\nu+2}\sum_{n=1}^{N_1}n^{2\nu+3}\\
         &\leq Cx^{2\nu+2}N_1^{2\nu+4}\leq Cx^{-2}\leq Ct^{-2},
    \end{align*}
    and the estimate of $\Sigma_2$ is obtained exactly as in the first subcase.

    \noindent
    \textbf{Case 3.} Assume that $0<x<y/2$. Under this assumption, $t>|x-y|\simeq y$. $N_1=\lfloor 1/y\rfloor$ and $N_2 = \lfloor 1/x \rfloor$. Now, the estimates follow by the same arguments as in Case 1.
    \end{proof}

The following corollary is a direct consequence of the symmetry of $P_t(x,y)$, Lemma \ref{Lem:PropositionSix} and \eqref{deltaL}.
    \begin{cor}\label{corrrr}
    There exists a constant $C$ such that
    \begin{equation*}
        \left|\frac{\partial}{\partial y}P_t(x,y)\right|\leq C\frac{1}{t^2+|x-y|^{2}}+\frac{C}{y}|P_t(x,y)|.
        \end{equation*}
    \end{cor}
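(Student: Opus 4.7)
The plan is to exploit the fact that $P_t(x,y)$ is symmetric in its spatial variables together with the identity that relates $\partial_x$ to $\delta_L$, and then invoke Lemma \ref{Lem:PropositionSix} directly.

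First, I would record the symmetry. From \eqref{Poisson kernel} and the definition $P_t(x,y)=(xy)^{\nu+1/2}\mathcal{P}_t(x,y)$, with $\mathcal{P}_t(x,y)=\sum_{n\geq 1}e^{-t\lambda_{n,\nu}}\phi_n^\nu(x)\phi_n^\nu(y)$, one has $P_t(x,y)=P_t(y,x)$. Differentiating the identity $P_t(x,y)=P_t(y,x)$ with respect to $y$ yields
$$
\frac{\partial}{\partial y}P_t(x,y)=\bigl(\partial_1 P_t\bigr)(y,x),
$$
so the $y$-derivative of $P_t(x,y)$ is the first-variable derivative of $P_t$ evaluated at $(y,x)$.

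Next, I would rewrite that first-variable derivative using \eqref{deltaL}. Since $\delta_L f(u)=-f'(u)+\frac{\nu+1/2}{u}f(u)$ acts on the first variable, we obtain
$$
\bigl(\partial_1 P_t\bigr)(y,x)=-\delta_L P_t(\,\cdot\,,x)(y)+\frac{\nu+1/2}{y}\,P_t(y,x).
$$
Taking absolute values and using $|P_t(y,x)|=|P_t(x,y)|$ gives
$$
\left|\frac{\partial}{\partial y}P_t(x,y)\right|\leq \bigl|\delta_L P_t(\,\cdot\,,x)(y)\bigr|+\frac{\nu+1/2}{y}\,|P_t(x,y)|.
$$

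Finally, I would apply Lemma \ref{Lem:PropositionSix}, which bounds $|\delta_L P_t(u,v)|$ by $C(t^2+|u-v|^2)^{-1}$ uniformly in $u,v\in(0,1)$ and $t>0$. Setting $u=y$, $v=x$ and absorbing the constant $\nu+1/2$ into $C$ yields the claimed inequality. There is essentially no obstacle here; the only point requiring care is keeping track of which variable the operator $\delta_L$ is applied in when using the symmetry, so I would be explicit about that swap rather than sweep it under the rug.
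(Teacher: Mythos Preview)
Your proposal is correct and follows exactly the same approach as the paper, which states that the corollary is a direct consequence of the symmetry of $P_t(x,y)$, Lemma \ref{Lem:PropositionSix}, and \eqref{deltaL}. You have simply written out those three ingredients in detail.
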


%%%%  Section 2
\section{Local Hardy spaces on spaces of homogeneous type}\label{sec2}

In this section we present some facts about local Hardy spaces on spaces of homogeneous type that will be used in the proofs of our main results. The reader may find more details and references in \cite{Coifman-Weis,Macias-Segovia,Uchiyama}.

Let $(X,d,\sigma)$ be a space of homogeneous type. Additionally, suppose that there exists a constant $A>1$ satisfying
    $$A^{-1}r\le \sigma(B_d(x,r))\le A r,
    $$
for $x\in X$ and $0<r<\sigma(X)$, where $B_d(x,r)=\{y\in X:d(x,y)<r\}$. Moreover, assume that there exists a continuous function $K(r,x,y)$, for  $(t,x,y)\in(0,\sigma(X))\times X\times X$, and positive constants $\gamma_1, \gamma_2, \gamma_3$, such that
    \begin{align}
        \label{UchiRep1} &K(r,x,x)\geq \frac{1}{Ar}>0,\\
        \label{UchiRep2} &0\leq K(r,x,y)\leq \frac{A}{r}\left(1+\frac{d(x,y)}{r}\right)^{-1-\gamma_1},
    \end{align}
and, whenever $d(y,z)<(r+d(x,y))/(4A)$, we have
    \begin{equation}\label{UchiRep3}
          |K(r,x,y)-K(r,x,z)|\leq \frac{A}{r}\left(\frac{d(y,z)}{r}\right)^{\gamma_2}\left(1+\frac{d(x,y)}{r}\right)^{-1-\gamma_3}.
    \end{equation}

Consider (see \cite{Uchiyama}) the maximal function
    \begin{equation}\label{eq:maximalUchi}
        f^{(+)}(x)=\sup_{0<r<\sigma(X)}|K_{r}f(x)|=\sup_{0<r <\sigma(X)}\Big|\int_{X}K(r,x,y)f(y)d\sigma(y)\Big|.
    \end{equation}

Let $H^1(X):=\{f:\|f\|_{H^1(X)}:=\|f^{(+)}\|_{L^1(X)}<\infty\}$. Assume now that $\sigma(X)<\infty$. We say that a function $a$ is an atom for $H^1_{\mathrm{at}}(X)$ if either
    \begin{enumerate}
               \item[(i)] there exists a ball $B_d(x_0,R)\subset X$ such that
        $$\supp \, a \subseteq B_d(x,R), \qquad \|a\|_{\8} \leq \sigma(B_d(x,R))^{-1},\quad \text{ and } \quad \int_X a(x)\, d\sigma(x) =0,$$
        or
         \item[(ii)] $a(x)=\sigma(X)^{-1}\chi_X(x)$.
    \end{enumerate}
After defining atoms, the space $H^1_{\mathrm{at}}(X)$ and its norm are defined in the usual way.

\begin{thm}[{\cite[Corollary 1']{Uchiyama}}]
        \label{th:Uchiyama}
        Let $(X,d,\sigma)$ be a space of homogeneous type equipped with a kernel $K(r,x,y)$, $0<r<\sigma(X)$, satisfying \eqref{UchiRep1}--\eqref{UchiRep3}. Then there exists constants $C_1, C_2>0$ depending only on $A$ and $\gamma_1, \gamma_2, \gamma_3$ such that
                $$C_1\|f\|_{H^1(X)}\le\|f\|_{H^1_{\mathrm{at}}(X)}\le C_2\|f\|_{H^1(X)}.$$
    \end{thm}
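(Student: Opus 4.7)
The plan is to prove the two inequalities separately. For the easier direction $\|f\|_{H^1(X)} \le C \|f\|_{H^1_{\mathrm{at}}(X)}$, by linearity it suffices to obtain a uniform bound $\|a^{(+)}\|_{L^1(X)} \le C$ for an arbitrary atom $a$. A type-(ii) atom $a=\sigma(X)^{-1}\chi_X$ is handled directly: the size bound \eqref{UchiRep2} combined with $\sigma(B_d(x,s))\le As$ and a layer-cake argument yields $\|K_{r}\|_{L^\infty\to L^\infty}\le C$ uniformly in $r$, so $a^{(+)} \le C\sigma(X)^{-1}$. For a type-(i) atom supported on $B=B_d(x_0,R)$ with $\int a\,d\sigma=0$, I would split $X=2B\cup(X\setminus 2B)$. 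Near $B$, bound $a^{(+)}\le C\|a\|_\infty$ by the same $L^\infty$ estimate and integrate using $\sigma(2B)\le 2AR$ and $\|a\|_\infty\le\sigma(B)^{-1}$. Far from $B$, exploit cancellation by writing $K_r a(x)=\int(K(r,x,y)-K(r,x,x_0))a(y)\,d\sigma(y)$ and applying the H\"older-type smoothness \eqref{UchiRep3}, whose hypothesis $d(y,x_0)<(r+d(x,x_0))/(4A)$ is automatic for $y\in B$ and $x\notin 2B$. Optimising in $r$ gives $a^{(+)}(x)\le CR^{\gamma_2}\,d(x,x_0)^{-1-\gamma_2}$, which is integrable over $X\setminus 2B$ by the upper doubling condition.

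For the hard direction $\|f\|_{H^1_{\mathrm{at}}(X)} \le C \|f\|_{H^1(X)}$, I would run a Calder\'on--Zygmund decomposition driven by the maximal function $f^{(+)}$. Set $\Omega_k=\{f^{(+)}>2^k\}$, so that $\sum_k 2^k \sigma(\Omega_k)\simeq \|f^{(+)}\|_{L^1(X)}$. Cover each $\Omega_k$ by a Whitney-type family of balls $\{B_{k,j}\}$ of bounded overlap with radii $r_{k,j}\sim d(x_{k,j},\Omega_k^c)$ (available in every space of homogeneous type via the Coifman--Weiss construction), together with a Lipschitz partition of unity $\{\varphi_{k,j}\}$ subordinate to $\{B_{k,j}\}$. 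Using the lower bound \eqref{UchiRep1} I would establish the pointwise estimate $|f(x)|\le A f^{(+)}(x)$ a.e.\ and a telescoping identity $f=\sum_k(g_k-g_{k+1})$ convergent in $L^1(X)$, where $g_k$ is a level-$k$ smooth truncation of $f$ built from the family $K_r$ (morally $g_k=K_{r_k}f$ frozen outside $\Omega_k$). Localising each difference via $\varphi_{k,j}$ and subtracting its mean on $B_{k,j}$ yields functions $b_{k,j}$ which, after rescaling by $C2^k\sigma(B_{k,j})$, are type-(i) atoms. The atomic norm is then bounded by $\sum_{k,j} 2^k\sigma(B_{k,j}) \lesssim \sum_k 2^k \sigma(\Omega_k) \lesssim \|f^{(+)}\|_{L^1(X)}$, while the subtracted means reassemble into a residual that is handled recursively or absorbed into the next level.

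The principal obstacle lies in the hard direction, specifically in making the three kernel hypotheses cooperate. Condition \eqref{UchiRep1} forces $K_r$ to be an honest approximate identity with a quantitative diagonal lower bound, which is what allows both the pointwise domination of $|f|$ by $f^{(+)}$ and the convergence of the telescoping sum; \eqref{UchiRep2} supplies the off-diagonal tail decay used for absolute summability; and \eqref{UchiRep3} is essential for extracting the cancellation that turns the localised pieces $b_{k,j}$ into genuine atoms after centring. The delicate technical point is the bookkeeping of the mean-value corrections: the centring constants introduce a secondary layer of atoms at adjacent levels, and verifying that this recursive process terminates with constants depending only on $A$ and $\gamma_1,\gamma_2,\gamma_3$ requires the careful stopping-time and summation estimates originally developed by Uchiyama.
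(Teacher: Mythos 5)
The paper does not prove this statement at all: it is quoted verbatim from Uchiyama's paper (his Corollary 1') and used as a black box throughout Sections 4 and 5, so there is no internal argument to compare yours against. What you have written is therefore a reconstruction of Uchiyama's own proof, and it should be judged on whether it is self-contained.

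The easy direction of your sketch is essentially complete and standard, up to one quantitative slip: the hypothesis of \eqref{UchiRep3} requires $d(y,x_0)<(r+d(x,y))/(4A)$, and for $y\in B_d(x_0,R)$, $x\notin 2B$ and small $r$ this only gives $(r+d(x,y))/(4A)\gtrsim R/(4A)$, which is not $\ge R$; you need to excise a dilate of radius comparable to $ (4A+1)R$ (handling the intermediate annulus with the size bound \eqref{UchiRep2} and the measure bound $\sigma(B_d(x_0,s))\le As$), after which the tail estimate and its integrability go through as you say. The hard direction, however, is only an outline: the Calder\'on--Zygmund/Whitney architecture you describe is indeed the right one (it is the Coifman--Mac\'{\i}as--Segovia--Uchiyama strategy), but the entire content of the theorem sits in the two steps you leave as placeholders --- the construction of the smooth truncations $g_k$ (``morally $g_k=K_{r_k}f$ frozen outside $\Omega_k$'') and the verification that the subtracted means reassemble into a controllable residual with constants depending only on $A,\gamma_1,\gamma_2,\gamma_3$. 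Deferring precisely these points to ``the careful stopping-time and summation estimates originally developed by Uchiyama'' is circular if the aim is to prove the statement rather than to cite it. As a roadmap of the known proof your proposal is accurate; as a proof it has a genuine gap exactly where the difficulty is concentrated, and in the context of this paper the appropriate treatment is the one the authors chose, namely to invoke Uchiyama's result directly.
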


%%%% Section 3
\section{Analysis of $\mathcal{M}$ for $x,y$ near $0$}\label{sec:near0}

In this section we analyze the maximal function $\sup_{0<t<1}|\mathcal{P}_t f(x)|$ for $x\in \mathcal{I}_0^{**}$, where the function $f$ is such that $\mathrm{supp} \,f \subseteq \mathcal{I}_0^{**}$, see Subsection \ref{subsection L redondo} for the definition of $\mathcal{I}_0^{**}$. We are going to use Duhamel's formula to compare $\Pp_t$ with the Poisson semigroup $\PPp_t$ of the Bessel operator $\mathbf L$ acting on $(0,\8)$. In order to give the precise definitions, consider the Bessel differential operator
    \begin{equation}\label{bbess}
   \mathbf{L}=-\frac{d^2}{dx^2}-\frac{2\nu+1}{x}\frac{d}{dx}.
    \end{equation}
The operator $\mathbf L$ has a self-adjoint, densely defined extension on $L^2((0,\infty),\mu)$ that for convenience we still denote by $\mathbf L$. This extension is given by $\Hh(\mathbf L f)(\xi) = \xi^2 \Hh f (\xi)$, with
    \begin{align*}
    \operatorname{Dom} (\mathbf L) = &\left\{ f\in L^2((0,\infty),\mu): \xi^2 \mathcal H f(\xi)\in L^2((0,\8),\mu)\right\}.
    \end{align*}
Here $\mathcal{H}$ denotes the Hankel transform, which is the isometry on $L^2((0,\8), \mu)$ defined by
    $$ \mathcal H f(\xi)= c_\nu \int_0^\infty \varphi^{\nu }(\xi y)f(y)\, d\mu (y),$$
and $\varphi^{\nu }(\xi y)= (\xi y)^{-\nu} J_\nu (\xi y)$, for $\xi>0$.
The Poisson semigroup $\PPp_t$ related to $\mathbf L$ can be expressed in the following way
    $$\mathcal H (\PPp_t f)(\xi)= \mathcal{H}(e^{-t\sqrt{\mathbf L}}f)(\xi)=e^{-t\xi} \mathcal H f(\xi),\quad\xi>0.$$
The Hardy spaces related to the Bessel operator were investigated in \cite{Betancor-Dziubanski-Torrea} and \cite{Preisner}. The main result of this section is the following.

    \begin{thm}\label{theorem}
    There exists a constant $C$ such that, for any $f\in L^1(\mathcal{I}_0^{**}, \mu)$,
    $$\Big\| \sup_{0<t<1} |\PPp_t f(x)-\mathcal P_t f(x)|\Big\|_{L^1(\mathcal{I}_0^{**}, \mu)} \leq C \| f\|_{L^1(\mathcal{I}_0^{**}, \mu)}.$$
    \end{thm}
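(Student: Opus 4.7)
My plan is a Duhamel argument, using a spatial cutoff to isolate a commutator term whose support keeps the relevant kernels well separated. First, I would pick $\eta\in C_c^\infty((0,1))$ with $\eta\equiv 1$ on $\mathcal{I}_0^{**}$ and $\supp\eta'$ contained in a compact subset of $(0,1)$ at positive distance from both $0$ and $\mathcal{I}_0^{**}$; extending $f$ by zero to $(0,\infty)$, I set
\[
w(x,t):=\eta(x)\PPp_t f(x)-\Pp_t f(x),\qquad (x,t)\in(0,1)\times(0,\infty).
\]
Since $\eta\equiv 1$ on $\supp f$ and $\eta$ vanishes near $x=1$, and since the differential expressions of $\LLl$ and $\Ll$ coincide, $w$ solves
\[
\partial_t^2 w-\Ll w=-[\Ll,\eta]\PPp_t f,\qquad w(\cdot,0)=0,\qquad w(1,t)=0,\qquad w(x,t)\to 0\text{ as }t\to\infty.
\]
The commutator $[\Ll,\eta]u=-\eta''u-2\eta'u'-\frac{2\nu+1}{x}\eta' u$ is supported on $\supp\eta'$, and on $\mathcal{I}_0^{**}$ we have $w=\PPp_tf-\Pp_tf$, which is exactly the quantity to control.

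Next, I would solve this problem by expanding $w$ and $g:=-[\Ll,\eta]\PPp_tf$ in the eigenbasis $\{\phi_n^\nu\}$. Each coefficient $\hat w_n(t)$ satisfies $\hat w_n''-\lambda_{n,\nu}^2\hat w_n=\hat g_n$ with $\hat w_n(0)=0$ and $\hat w_n(t)\to 0$, whose bounded Green's function on $[0,\infty)$ equals $-\frac{1}{2\lambda_{n,\nu}}(e^{-\lambda_{n,\nu}|t-s|}-e^{-\lambda_{n,\nu}(t+s)})=-\frac12\int_{|t-s|}^{t+s}e^{-\lambda_{n,\nu}r}\,dr$. Summing the resulting series and recognizing the Poisson kernel yields, for $x\in\mathcal{I}_0^{**}$ and $t>0$,
\[
\PPp_tf(x)-\Pp_tf(x)=\tfrac12\int_0^\infty\!\!\int_{|t-s|}^{t+s}\!\!\int_0^1\Pp_r(x,y)\,[\Ll,\eta]_y\PPp_sf(y)\,d\mu(y)\,dr\,ds.
\]
Using the anti-symmetry of $[\Ll,\eta]$ on $L^2(d\mu)$, I would integrate by parts in $y$ so that the derivatives fall onto $\Pp_r(x,y)$ (boundary terms vanish because $\eta$ is compactly supported inside $(0,1)$), leaving an integrand involving only $\Pp_r(x,y)$ and $\partial_y\Pp_r(x,y)$ multiplied by $\eta'$ or $\eta''$, all supported at positive distance from $\mathcal{I}_0^{**}$.

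For the estimates, for $x\in\mathcal{I}_0^{**}$ and $y\in\supp\eta'$ one has $|x-y|\ge c>0$ and $y\ge c'>0$, so Lemma \ref{Lem:SharpNatural} gives $|\Pp_r(x,y)|\lesssim r/(r^2+1)$, and Lemma \ref{PropositionSixNonLeb} (with $y^{-\nu-1/2}\lesssim 1$) gives $|\partial_y\Pp_r(x,y)|\lesssim x^{-\nu-1/2}/(r^2+1)$. For the remaining factor, the sharp Bessel Poisson kernel estimate on $(0,\infty)$ from \cite{Betancor-Dziubanski-Torrea}, combined with the separation between $y$ and $\supp f$, yields $|\PPp_sf(y)|\lesssim \min(s,\,s^{-2\nu-2})\,\|f\|_{L^1((0,1),\mu)}$, where the integrability of $s^{-2\nu-2}$ at infinity is exactly what forces $\nu>-1/2$. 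Since $\int_{|t-s|}^{t+s}(r^2+1)^{-1}\,dr\lesssim \min(t,s)/(1+(t-s)^2)$, splitting $\int_0^\infty ds$ into $s\le t$, $t<s\le 1$ and $s>1$ gives a bound on the triple integral by $Cx^{-\nu-1/2}\|f\|_{L^1((0,1),\mu)}$, uniformly in $t\in(0,1)$. Taking the supremum in $t$ and integrating against $d\mu(x)=x^{2\nu+1}dx$ on $\mathcal{I}_0^{**}$, the factor $\int x^{\nu+1/2}\,dx$ is finite (again by $\nu>-1/2$), which completes the proof.

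The main obstacle I anticipate is the rigorous handling of the spectral Duhamel step: justifying the termwise differentiation of the eigenfunction expansions of $w$ and $g$, the interchange of summation and integration in $r$ and $s$, and the convergence of the resulting triple integral once the slow decay of $\PPp_sf(y)$ as $s\to\infty$ and the blow-up of $\partial_y\Pp_r(x,y)$ in the factor $x^{-\nu-1/2}$ are taken into account. This will require careful choice of the order in which the integrations are performed.
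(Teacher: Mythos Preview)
Your argument is correct in outline and genuinely different from the paper's. The paper never applies Duhamel to the Poisson problem directly; instead it runs Duhamel for the \emph{heat} semigroups $\mathcal T_t=e^{-t\mathcal L}$ and $\mathbf T_t=e^{-t\mathbf L}$ (first order in $t$, so the perturbation formula is the standard $\frac{d}{ds}\,\mathbf T_{t-s}M_\rho\widetilde{\mathcal T}_s$ over $s\in[0,t]$), obtains uniform kernel bounds for the remainder on $\mathcal I_0^{**}\times\mathcal I_0^{**}$, and only then passes to $\mathcal P_t$ and $\mathbf P_t$ via subordination. Your route replaces both steps by a single ``wave Duhamel'': you solve the second-order boundary value problem $\hat w_n''-\lambda_{n,\nu}^2\hat w_n=\hat g_n$ on $[0,\infty)$ with its explicit Green's function, then estimate the resulting triple integral using only the Poisson-type bounds of Lemmas~\ref{Lem:SharpNatural} and~\ref{PropositionSixNonLeb} together with the off-diagonal decay of $\mathbf P_s(y,z)$. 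The trade-off is clear: the paper's approach keeps all time integrals on the compact interval $[0,t]\subset[0,1]$ and the domain issues for the generators (Lemmas~\ref{domain1}--\ref{lee3}) are routine, at the cost of the extra subordination step; your approach is more direct and uses only Poisson-kernel estimates, but the Green's function forces an integral over $s\in(0,\infty)$, so you must control the large-$s$ tail of $\mathbf P_sf(y)$ (your $s^{-2\nu-2}$ bound), and the rigorous justification of the eigenfunction representation of $w$ for $f$ merely in $L^1$ requires an additional density argument. Two minor remarks: to have $\supp\eta'$ at positive distance from $\mathcal I_0^{**}$ you should take $\eta\equiv1$ on a slightly larger set than $\mathcal I_0^{**}$; and the factor $\min(t,s)/(1+(t-s)^2)$ actually gives convergence of the $s$-integral for all $\nu>-3/2$, so the restriction $\nu>-1/2$ is not ``forced'' at that particular step (it enters elsewhere in the paper through the kernel estimates).
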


To prove Theorem \ref{theorem} we shall consider the heat semigroups generated by ${\Ll}$ on $L^2((0,1),\mu)$ and by $\mathbf{L}$ on $L^2((0,\8),\mu)$. We have that
    \begin{align*}
    &\mathcal T_tf(x)=e^{-t\Ll}f(x)=\int_0^1 \mathcal T_t(x,y)f(y)\, d\mu (y),\\
    &\mathcal T_t(x,y) = \sum_{n=1}^\infty e^{-t\lambda_{n,\nu}^2} \phi_n^\nu (x)
    \phi_n^\nu (y),\quad x,y\in(0,1),~t>0,
    \end{align*}
and
    \begin{align*}
    &\mathbf T_tf(x)=e^{-t\mathbf L}f(x)=\int_0^\infty \mathbf T_t(x,y) f(y)\, d\mu (y),\\%\  \text{ on }  \ L^2((0,\infty), d\mu).$$
    &\mathbf T_t(x,y) = (2t)^{-1} \exp\left(-\frac{x^2+y^2}{4t}\right) I_{\nu} \left(\frac{xy}{2t}\right) (xy)^{-\nu},\quad x,y\in(0,\infty),~t>0.
    \end{align*}
It is well known that
$ \mathcal H (\mathbf T_t f)(\xi)=e^{-t\xi^2} \mathcal H f(\xi)$ on $L^2((0,\infty),\mu)$.

In Lemmas \ref{domain1}, \ref{domain2}, \ref{lee3} we study the domains of the operators $\mc L$ and $\mathbf L$.
    \begin{lem} \label{domain1}
    Let $f\in C^2_c[0,\infty)$ such that $f'(0^+)=0$. Then $f\in \operatorname{Dom} (\mathbf L)$ and
  $$
    \mathbf L f(x)=-f''(x) - \frac{2\nu +1}{x}f'(x).
    $$
    \end{lem}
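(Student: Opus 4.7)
The plan is to establish both conclusions — membership $f\in\operatorname{Dom}(\mathbf L)$ and the differential formula — in one stroke by integration by parts against the Hankel kernel. Set
\[
g(x):=-f''(x)-\frac{2\nu+1}{x}f'(x).
\]
First I would check that $g\in L^1\cap L^2((0,\infty),\mu)$. Since $f\in C_c^2[0,\infty)$, the term $f''$ is bounded with compact support. The hypothesis $f'(0^+)=0$, combined with the Taylor expansion $f'(x)=f''(0)x+o(x)$ near $0$, ensures that $f'(x)/x$ is bounded in a neighborhood of the origin. Because $d\mu=x^{2\nu+1}dx$ with $2\nu+1>0$, it follows that $g$ is bounded, compactly supported, and in $L^2((0,\infty),\mu)$.

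Next I would show that $\mathcal{H}g(\xi)=\xi^2\mathcal{H}f(\xi)$. The key ingredient is the Bessel eigenfunction identity
\[
\mathbf L_y\,\varphi^{\nu}(\xi y)=\xi^2\,\varphi^{\nu}(\xi y),
\]
which follows from Bessel's equation and \eqref{eq:deriv}. Writing $\mathbf L$ in the self-adjoint form $\mathbf L=-y^{-(2\nu+1)}\frac{d}{dy}\bigl(y^{2\nu+1}\frac{d}{dy}\bigr)$, I integrate by parts twice in
\[
c_\nu\int_0^\infty \bigl(\mathbf L_y\varphi^{\nu}(\xi y)\bigr)\,f(y)\,y^{2\nu+1}\,dy
\]
to move the operator onto $f$; the result is $\mathcal{H}g(\xi)$ provided the boundary terms at $y=0$ and $y=\infty$ vanish. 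The terms at $\infty$ are trivial by compact support.

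The delicate step — and the main obstacle — is the vanishing of the boundary contributions at $y=0$. From \eqref{eq:asymp_zero} and \eqref{eq:deriv} one has $\varphi^{\nu}(z)=c+O(z^2)$ and $\tfrac{d}{dz}\varphi^{\nu}(z)=-z^{-\nu}J_{\nu+1}(z)=O(z)$ as $z\to0^+$. With $u(y):=\varphi^{\nu}(\xi y)$ and $v:=f$, this gives $y^{2\nu+1}u'(y)v(y)=O(y^{2\nu+2})\to0$ and $y^{2\nu+1}u(y)v'(y)=y^{2\nu+1}\bigl(c+O(y^2)\bigr)\bigl(f'(0^+)+o(1)\bigr)\to0$, the last limit being the precise spot where the assumption $f'(0^+)=0$ is used (together with $2\nu+1>0$). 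Hence $\mathcal{H}g(\xi)=\xi^2\mathcal{H}f(\xi)$ pointwise.

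Finally, since $g\in L^2((0,\infty),\mu)$, Plancherel's theorem for the Hankel transform yields $\mathcal{H}g\in L^2((0,\infty),\mu)$, so $\xi^2\mathcal{H}f(\xi)\in L^2((0,\infty),\mu)$. This shows $f\in\operatorname{Dom}(\mathbf L)$. Combined with the identity $\mathcal{H}(\mathbf L f)(\xi)=\xi^2\mathcal{H}f(\xi)=\mathcal{H}g(\xi)$ and the injectivity of $\mathcal{H}$, we conclude $\mathbf L f=g$, which is precisely the claimed differential formula.
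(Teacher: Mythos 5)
Your proposal is correct and follows essentially the same route as the paper: integrate by parts against the Hankel kernel $\varphi^\nu_\xi$, use the eigenfunction equation to get $\mathcal{H}g(\xi)=\xi^2\mathcal{H}f(\xi)$, observe that $g=-f''-\frac{2\nu+1}{x}f'$ is continuous with compact support (hence in $L^2((0,\infty),\mu)$, which is where $f'(0^+)=0$ is really doing its work), and conclude via the definition of $\operatorname{Dom}(\mathbf L)$ through the Hankel transform. Your treatment of the boundary terms at the origin is in fact more explicit than the paper's; the only small inaccuracy is attributing the vanishing of $y^{2\nu+1}\varphi^\nu(\xi y)f'(y)$ to the hypothesis $f'(0^+)=0$, since for $\nu>-1/2$ that term vanishes from $y^{2\nu+1}\to 0$ alone.
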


    \begin{proof}
    Let $g\in C^2[0,\infty)$, $g'(0^{+})=0$. By integrating by parts,
  \begin{equation}\label{3.1}
    \begin{aligned}
    \int_0^{\infty}\Big(-f''(x) - \frac{2\nu +1}{x}f'(x)\Big)g(x)\,d\mu(x)&=\int_0^\infty f'(x)g'(x) \, d\mu (x)\\
    &=\int_0^\infty f(x)\Big(-g''(x) - \frac{2\nu +1}{x}g'(x)\Big)\,d\mu(x).
    \end{aligned}
    \end{equation}
For every $\xi>0$ the function $\varphi^{\nu }_\xi (x) = \varphi^{\nu }(\xi x)$ is $C^\infty[0,\8)$,  $\frac{d}{dx} \varphi^{\nu}_{\xi} (x)=0$ for $x=0^{+}$, and verifies $\frac{d^2}{dx^2} \varphi^\nu_\xi ( x)+\frac{2\nu+1}{x}\frac{d}{dx}\varphi^\nu_\xi (x)=-\xi^2 \varphi^\nu_\xi(x)$. From \eqref{3.1},
\begin{equation*}
\begin{split}
-\xi^2 \mathcal H f(\xi) & = \int_0^\infty f(x) \Big(\frac{d^2}{dx^2} \varphi^\nu_\xi  (x)+\frac{2\nu+1}{x}\frac{d}{dx}\varphi^\nu_\xi ( x)\Big) d\mu(x)\\
&=\int_0^\infty \Big(f''(x) + \frac{2\nu +1}{x}f'(x)\Big)\varphi^\nu_\xi( x)d\mu (x)\\
&= \mathcal H\Big(f'' + \frac{2\nu +1}{x}f'\Big)(\xi).
\end{split}
\end{equation*}
Note that $f''(x) + \frac{2\nu +1}{x}f'(x)$ is continuous on $[0,\infty)$ with compact support. This and the identity above give that $f\in \operatorname{Dom}(\mathbf L)$. Since, by definition, $\mathcal H(\mathbf L f)(\xi)= \xi^2 \mathcal H f(\xi)$ we obtain the conclusion.
    \end{proof}

    \begin{lem} \label{domain2}
    Let $f\in L^2((0,1),\mu)$. Then $\mathcal T_t f\in \operatorname{Dom}(\mathcal L)\cap C^2[0,1]$ and
      $$\mathcal L \mathcal T_t f(x)=-\Big(\frac{d^2}{d x^2} +\frac{2\nu+1}{x}\frac{d}{dx} \Big) (\mathcal T_t f)(x).$$

    \end{lem}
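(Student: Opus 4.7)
The plan is to expand $\mc T_tf$ in the orthonormal basis $\{\phi_n^\nu\}$ and promote $L^2$-convergence of the resulting series to uniform convergence of the function together with its first two derivatives on $[0,1]$. Set $a_n:=\langle f,\phi_n^\nu\rangle_\mu$, so $\sum_n|a_n|^2=\|f\|_{L^2((0,1),\mu)}^2$ and
\[
\mc T_tf(x)=\sum_{n=1}^\infty e^{-t\lambda_{n,\nu}^2}a_n\phi_n^\nu(x)\quad\text{in }L^2((0,1),\mu).
\]
Since $\sup_n\lambda_{n,\nu}^4e^{-2t\lambda_{n,\nu}^2}<\infty$, one has $\sum_n\lambda_{n,\nu}^4 e^{-2t\lambda_{n,\nu}^2}|a_n|^2<\infty$, hence $\mc T_tf\in\operatorname{Dom}(\mc L)$, and by the spectral theorem $\mc L\mc T_tf=\sum_n\lambda_{n,\nu}^2 e^{-t\lambda_{n,\nu}^2}a_n\phi_n^\nu$ in $L^2((0,1),\mu)$.

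I then establish polynomial sup-norm bounds on $[0,1]$ for $\phi_n^\nu$, $\phi_n^{\nu\prime}$, $\phi_n^{\nu\prime}/x$, and $\phi_n^{\nu\prime\prime}$. Writing $\phi_n^\nu(x)=d_{n,\nu}\lambda_{n,\nu}^{\nu+1/2}(\lambda_{n,\nu}x)^{-\nu}J_\nu(\lambda_{n,\nu}x)$ and using \eqref{eq:deriv}, we get
\[
\frac{\phi_n^{\nu\prime}(x)}{x}=-d_{n,\nu}\lambda_{n,\nu}^{\nu+5/2}\,(\lambda_{n,\nu}x)^{-\nu-1}J_{\nu+1}(\lambda_{n,\nu}x).
\]
The assumption $\nu>-1/2$ together with \eqref{eq:asymp_zero} and \eqref{eq:asymp_inf} implies that $y\mapsto y^{-\nu}J_\nu(y)$ and $y\mapsto y^{-\nu-1}J_{\nu+1}(y)$ are bounded on $[0,\infty)$; combined with $d_{n,\nu}=O(1)$ from \eqref{eq:zero-cte} this yields $\|\phi_n^\nu\|_\infty\le C\lambda_{n,\nu}^{\nu+1/2}$ and $\|\phi_n^{\nu\prime}\|_\infty+\|\phi_n^{\nu\prime}/x\|_\infty\le C\lambda_{n,\nu}^{\nu+5/2}$. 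The classical Bessel ODE $-\phi_n^{\nu\prime\prime}-(2\nu+1)x^{-1}\phi_n^{\nu\prime}=\lambda_{n,\nu}^2\phi_n^\nu$ on $(0,1]$ then gives $\|\phi_n^{\nu\prime\prime}\|_\infty\le C\lambda_{n,\nu}^{\nu+5/2}$ as well.

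Combining these bounds with $|a_n|\le\|f\|_{L^2((0,1),\mu)}$, the rapid decay of $e^{-t\lambda_{n,\nu}^2}$, and $\lambda_{n,\nu}\simeq n$, each of the termwise-differentiated series (up to second order, and including $\sum_n e^{-t\lambda_{n,\nu}^2}a_n\phi_n^{\nu\prime}(x)/x$) converges absolutely and uniformly on $[0,1]$. Hence $\mc T_tf\in C^2[0,1]$, the differential operator $-d^2/dx^2-(2\nu+1)x^{-1}d/dx$ may be interchanged with summation, and by the eigenvalue identity the outcome equals $\sum_n\lambda_{n,\nu}^2 e^{-t\lambda_{n,\nu}^2}a_n\phi_n^\nu$, which matches the formula for $\mc L\mc T_tf$ derived above.

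The main obstacle is controlling the singular factor $x^{-1}$ at $x=0$: one needs a bound on $\phi_n^{\nu\prime}(x)/x$ with only polynomial growth in $\lambda_{n,\nu}$. This is handled by the explicit identity for $\phi_n^{\nu\prime}/x$ displayed above together with the uniform boundedness of $y^{-\nu-1}J_{\nu+1}(y)$ on $[0,\infty)$ for $\nu>-1/2$; all other estimates follow routinely from the eigenvalue relation and standard Bessel asymptotics.
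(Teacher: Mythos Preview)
Your proof is correct and follows the same route as the paper: expand $\mc T_tf$ in the eigenbasis $\{\phi_n^\nu\}$, use the exponential decay of $e^{-t\lambda_{n,\nu}^2}$ to justify termwise differentiation, and identify the result with the spectral expression for $\mc L\mc T_tf$. The paper compresses the $C^2[0,1]$ verification into the phrase ``one easily verifies,'' whereas you have spelled out the polynomial sup-norm bounds on $\phi_n^\nu$, $\phi_n^{\nu\prime}$, $\phi_n^{\nu\prime}/x$, and $\phi_n^{\nu\prime\prime}$ that make this step rigorous.
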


    \begin{proof}
    For $f\in L^2(0,1)$ we write $f= \sum_{n=1}^\8 \langle f, \phi_n^\nu \rangle_\mu \phi_n^\nu$, so $\|f\|^2_{L^2(\mu)} = \sum_{n=1}^\8 |\langle f, \phi_n^\nu \rangle_\mu|^2 <\8$. Then, by definition, $\mc T_t f(x) = \sum_{n=1}^\8 e^{-t\lambda_{n,\nu}^2} \langle f, \phi_n^\nu \rangle_\mu \phi_n^\nu(x)$. One easily verifies that $\mc T_t f(x) \in C^2[0,1]$,
    \begin{align*}
    -\Big(\frac{d^2}{d x^2} +\frac{2\nu+1}{x}\frac{d}{dx} \Big) (\mathcal T_t f)(x)&= \sum_{n=1}^\8 e^{-t\lambda_{n,\nu}^2} \langle f, \phi_n^\nu \rangle \Big(-\frac{d^2}{d x^2} -\frac{2\nu+1}{x}\frac{d}{dx} \Big) \phi_n^\nu(x)\\
    & = \sum_{n=1}^\8 \lambda_{n,\nu}^2 e^{-t\lambda_{n,\nu}^2} \langle f, \phi_n^\nu \rangle \phi_n^\nu(x) = \mc L \mc T_t f (x),
    \end{align*}
    and, clearly, $\mathcal T_t f\in \operatorname{Dom}(\mathcal L)$.
    \end{proof}

Let us set $\widetilde{\phi}_n^\nu (x)=\phi_n^{\nu}(x)\chi_{(0,1)}(x)$, for $x>0$. On $L^2((0,\infty),\mu)$ consider the operator
    $$\widetilde{\mathcal T}_tf(x)=\int_0^\infty  \widetilde{\mathcal T}_t(x,y)f(y)\, d\mu (y),$$
with the integral kernel
    \begin{equation}\label{defTtilda}
    \widetilde{\mathcal{T}}_t(x,y)=\sum_{n=1}^\infty e^{-t\lambda_{n,\nu}^2} \widetilde\phi_n^\nu (x)
    \widetilde\phi_n^\nu (y), \quad  t,x,y\in(0,\8).
    \end{equation}
Clearly, $\widetilde{\mathcal T}_t$ is not a $C_0-$semigroup on $L^2((0,\infty),\mu)$ because $\lim_{t\to 0^+}\wt{ \mc T_t}f$  is not the identity operator in $L^2((0,\infty),\mu)$. Nevertheless, note that $\widetilde{\mathcal T}_tf(x) = \mathcal{T}_tf(x)$ when $x\in(0,1)$, $f\in L^2((0,\infty),\mu)$ and $\operatorname{supp}f\subseteq[0,1]$.

Let $M_h$ denote the multiplication operator, $M_h f(x)=h(x) f(x)$. Fix a function $\rho\in C^\infty[0,\8)$ such that $\rho(x)=1$ for $x\in\mc I_0^{**}$, $\rho (x)= 0$ for $x\not \in \mc I_0^{***}$.

    \begin{lem} \label{lee3} For every $s>0$ and $f\in L^2((0,\infty),\mu)$ the function $M_\rho\widetilde{\mathcal{T}}_s f$ is of class $C^2[0,1]$  and  $(M_\rho\widetilde{\mathcal T}_s f)'(0^+) =0$.
    \end{lem}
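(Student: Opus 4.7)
The plan is to use the eigenfunction expansion of $\widetilde{\mathcal{T}}_s f$ on $(0,1)$ and to differentiate it termwise up to order two, where convergence of the differentiated series is ensured by the Gaussian factor $e^{-s\lambda_{n,\nu}^2}$ dominating any polynomial-in-$n$ growth of $\phi_n^\nu$ and its derivatives. First, set $c_n = \int_0^1 \phi_n^\nu(y) f(y)\, d\mu(y)$; Bessel's inequality applied to $f\chi_{(0,1)}\in L^2((0,1),\mu)$ gives $\sum_n|c_n|^2 \le \|f\chi_{(0,1)}\|_{L^2(\mu)}^2$, so $|c_n|\le C$, and \eqref{defTtilda} yields, for $x\in(0,1)$,
$$\widetilde{\mathcal{T}}_s f(x) = \sum_{n=1}^\infty e^{-s\lambda_{n,\nu}^2}\, c_n\,\phi_n^\nu(x).$$
Since $\phi_n^\nu(x) = d_{n,\nu}\lambda_{n,\nu}^{\nu+1/2}\varphi^\nu(\lambda_{n,\nu} x)$ with $\varphi^\nu(z)=z^{-\nu}J_\nu(z)$ entire, each $\phi_n^\nu$ is in $C^\infty[0,\infty)$. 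The identity \eqref{eq:deriv} gives
$$(\phi_n^\nu)'(x) = -d_{n,\nu}\lambda_{n,\nu}^{3/2}\,x^{-\nu} J_{\nu+1}(\lambda_{n,\nu} x) = -d_{n,\nu}\lambda_{n,\nu}^{\nu+5/2}\, x\, \varphi^{\nu+1}(\lambda_{n,\nu} x),$$
so by \eqref{eq:asymp_zero} we have $(\phi_n^\nu)'(0^+)=0$, and $(\phi_n^\nu)'(x)/x$ is uniformly bounded on $[0,1]$ with polynomial dependence on $n$ (since $\varphi^{\nu+1}$ is bounded on $[0,\infty)$ for $\nu>-1/2$).

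Next, splitting into the ranges $\lambda_{n,\nu} x \le 1$ versus $\ge 1$ and applying \eqref{eq:asymp_zero}, \eqref{eq:asymp_inf}, \eqref{eq:zero-cte} yields bounds of the form $\|\phi_n^\nu\|_{L^\infty[0,1]},\ \|(\phi_n^\nu)'\|_{L^\infty[0,1]} \le C n^{A}$ for some $A=A(\nu)$; the eigenfunction identity $(\phi_n^\nu)''(x) = -(2\nu+1)x^{-1}(\phi_n^\nu)'(x) - \lambda_{n,\nu}^2 \phi_n^\nu(x)$ then provides an analogous polynomial bound for $(\phi_n^\nu)''$ on $[0,1]$. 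Since $e^{-s\lambda_{n,\nu}^2}\le e^{-csn^2}$ by \eqref{eq:zero-cte}, the series for $\widetilde{\mathcal{T}}_s f$ together with its formal termwise derivatives of order $\le 2$ converges uniformly on $[0,1]$. Hence $\widetilde{\mathcal{T}}_s f$ admits a $C^2[0,1]$ extension (in fact $C^\infty$) and
$$(\widetilde{\mathcal{T}}_s f)'(0^+) = \sum_{n=1}^\infty e^{-s\lambda_{n,\nu}^2}\, c_n\,(\phi_n^\nu)'(0^+) = 0.$$

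Finally, $\rho\in C^\infty[0,\infty)$ equals $1$ on $\mathcal{I}_0^{**}$, which contains a right neighborhood of $0$, and is supported in $\mathcal{I}_0^{***}\subseteq(0,1)$; so $\rho(0)=1$ and $\rho'(0^+)=0$. Thus $M_\rho\widetilde{\mathcal{T}}_s f = \rho\cdot\widetilde{\mathcal{T}}_s f$ is $C^2[0,1]$ (with $C^\infty$ regularity near $x=1$ coming from the vanishing of $\rho$ there) and
$$(M_\rho\widetilde{\mathcal{T}}_s f)'(0^+) = \rho'(0^+)\,\widetilde{\mathcal{T}}_s f(0^+) + \rho(0)\,(\widetilde{\mathcal{T}}_s f)'(0^+) = 0.$$
The main technical obstacle is the polynomial-in-$n$ bound on $(\phi_n^\nu)''$ uniformly on $[0,1]$, which reduces to controlling $(\phi_n^\nu)'(x)/x$ near $0$ and hence to the boundedness of $\varphi^{\nu+1}$ on $[0,\infty)$; once that is in hand, the Gaussian damping trivially legitimates termwise differentiation.
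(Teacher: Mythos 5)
Your proof is correct and follows essentially the same route as the paper's (which is only a brief sketch): expand $\widetilde{\mathcal T}_s f$ in the eigenfunction series, bound the coefficients $\int_0^1\phi_n^\nu f\,d\mu$ uniformly, use $(\phi_n^\nu)'(0^+)=0$, and let the Gaussian factor $e^{-s\lambda_{n,\nu}^2}$ absorb the polynomial-in-$n$ growth to justify termwise differentiation. You simply supply the details (the formula for $(\phi_n^\nu)'$ via \eqref{eq:deriv}, the bound on $(\phi_n^\nu)''$ from the eigenvalue equation) that the paper leaves implicit.
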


    \begin{proof}
    From the definition in \eqref{defTtilda},
        \begin{equation*}
        M_\rho\widetilde{\mathcal{T}}_s f(x) = \rho(x) \sum_{n=1}^\8 e^{-s\lambda_{n,\nu}^2} \phi_n^\nu(x) \int_0^1 \phi_n^\nu(y) f(y) \, d\mu(y).
        \end{equation*}
    Observe that $|\int_0^1 \phi_n^\nu \, f \, d\mu| \leq C$ and that $(\phi_n^\nu)'(0^+)=0$. The lemma follows from the properties of $\rho$ and $\phi_n^\nu$.
    \end{proof}

We are ready to consider the family of operators $\mathbf T_{t-s} M_\rho \widetilde{\mathcal T}_s $, $0<s<t$, acting on $L^2((0,\infty),\mu)$-functions with support contained in $\mc I_0^{**}$. A direct calculation leads to
    \begin{equation*}
    \begin{split}
    \frac{d}{ds}\Big(\mathbf T_{t-s} M_\rho\widetilde{ \mathcal T}_s f\Big) &=
    \mathbf T_{t-s} \( \mathbf L( M_\rho \widetilde{\mathcal T}_s f) - M_\rho \mathcal L \widetilde{\mathcal T}_s f\)\\
    &= -\mathbf T_{t-s} \( M_{\rho''} \widetilde{\mathcal T}_sf+2 M_{\rho '}  (\widetilde{\mathcal T}_s f)'+ M_{\frac{2\nu +1}{x} \rho'} \widetilde{\mathcal T}_s f\).
    \end{split}
    \end{equation*}
The last equality is justified by Lemmas \ref{domain1}, \ref{domain2}, and \ref{lee3}.
Consequently, by integrating the equation above we get Duhamel's identity
    \begin{equation} \label{eee1}
    \begin{split}
     M_\rho \widetilde{\mathcal T}_t f - \mathbf T_t M_\rho f
    = \int_0^t
    -\mathbf T_{t-s} M_{\rho''} \widetilde{\mathcal T}_sf\, ds -2\int_0^t\mathbf T_{t-s} M_{\rho '}  (\widetilde{\mathcal T}_s f)'\, ds -\int_0^t\mathbf T_{t-s} M_{\frac{2\nu +1}{x} \rho'}\widetilde{ \mathcal T}_s f\, ds.
    \end{split}
    \end{equation}
Integrating by parts we obtain
    \begin{equation}\label{eee2}
    \begin{split}
    -\int_0^t\mathbf T_{t-s} M_{\rho '}  (\widetilde{\mathcal T}_s f)'(x)\, ds
    &=- \int_0^t \int_0^\infty  \mathbf T_{t-s}(x,z)\rho'(z) z^{2\nu+1} \frac{d}{dz}(\widetilde{\mathcal T}_s f)(z)\, dz\, ds\\
    %&=  \int_0^t \int_0^\infty \frac{d}{dz}\Big(\mathbf T_{t-s}(x,z)\rho'(z) z^{2\nu+1}\Big) \widetilde{\mathcal T}_s f(z)\, dz\, ds\\
    &= \int_0^t \int_0^\infty \Big( \frac{d}{dz} \mathbf T_{t-s} (x,z)\Big) \rho'(z)z^{2\nu +1} \widetilde{\mathcal T}_s f(z)\, dz\, ds \\
    &\quad  +\int_0^t \int_0^\infty  \mathbf T_{t-s} (x,z)\rho''(z)z^{2\nu +1}\widetilde{\mathcal T}_sf(z) \, dz\, ds\\
    & \quad +\int_0^t \int_0^\infty  \mathbf T_{t-s} (x,z)\rho'(z)\frac{2\nu +1}{z} z^{2\nu +1}\widetilde{\mathcal T}_sf(z) \, dz\, ds.
    \end{split}
    \end{equation}
Combining \eqref{eee1} and \eqref{eee2} together,
    \begin{equation}\label{formula1}
    \begin{split}
    M_\rho \widetilde{\mathcal T}_t f(x) - \mathbf T_t M_\rho f(x)
    & =  \int_0^t \int_0^\infty  \mathbf T_{t-s} (x,z)\rho''(z)\widetilde{\mathcal T}_sf(z) \, d\mu (z)\, ds\\
     & \quad + 2\int_0^t \int_0^\infty \Big( \frac{d}{dz} \mathbf T_{t-s} (x,z)\Big) \rho'(z)  \widetilde{\mathcal T}_s f(z)\, d\mu (z)\, ds \\
    & \quad + \int_0^t \int_0^\infty  \mathbf T_{t-s} (x,z)\rho'(z)\frac{2\nu +1}{z} \widetilde{\mathcal T}_sf(z) \, d\mu (z) \, ds\\
    &= R_t^{[1]} f(x)+R_t^{[2]}f(x)+R_t^{[3]}f(x).
    \end{split}
    \end{equation}

We shall prove that the operators $\sup_{0<t<1} \chi_{\mc I^{**}}|R^{[j]}_t f|$, $j=1,2,3,$ are well-defined and bounded on $L^1(\mc I_0^{**}, \mu)$. It is an exercise to check that the measure $\mu$ on $(0,\8)$ satisfies
    \begin{equation}\label{measure}
    \mu(B(x,\sqrt{t}))\simeq
    \left\{
        \begin{array}{ll}
          x^{2\nu+1}\sqrt{t}, & \sqrt{t}\leq 2x, \\
          (\sqrt{t})^{2\nu+2}, & \sqrt{t}>2x.
        \end{array}
      \right.
    \end{equation}
We will also need well-known Gaussian estimates on $\mathbf T_t$ and $\mc T_t$.

    \begin{lem}\label{Lem:Gaussian}
    There exist constants $C,c>0$ such that the integral kernels $\mathbf T_t (x,y)$ and $\mathcal T_t(x,y)$ satisfy:
        \begin{itemize}
        \item[(a)]$\displaystyle 0<\mathbf T_t(x,y)\leq \frac{C}{\mu (B(x,\sqrt{t}))}\, e^{-c|x-y|^2\slash t}$, \qquad \qquad for $x,y,t\in(0,\infty)$,
        \item[(b)]$\displaystyle  \Big|\frac{\partial}{\partial y}\mathbf T_t(x,y)\Big|\leq \frac{C}{\sqrt{t}\mu (B(x,\sqrt{t}))}\, e^{-c|x-y|^2\slash t}$, \qquad \ \ for $x,y,t\in(0,\infty)$,
        \item[(c)]$\displaystyle  0\leq \mathcal{T}_t(x,y)\leq \frac{C}{\sqrt{t}(t\vee xy)^{\nu+1/2}}e^{-c(x-y)^2/t}$, \quad \ \ \ \ \ for $x,y,t\in(0,1)$,
        \item[(d)]$\displaystyle \mathcal{T}_t(x,y)\simeq (1-x)(1-y)e^{-t\lambda_{1,\nu}^2}$, \quad \qquad \qquad \qquad for $x,y\in(0,1)$, $t\geq1$.
        \end{itemize}
    \end{lem}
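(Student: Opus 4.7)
The plan is to prove the four estimates in order, deriving (c) and (d) from (a) together with known structural facts about the two semigroups.

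For parts (a) and (b) the starting point is the explicit formula
\[
\mathbf{T}_t(x,y) = (2t)^{-1}\exp\!\Big(-\tfrac{x^2+y^2}{4t}\Big) I_\nu\!\Big(\tfrac{xy}{2t}\Big)(xy)^{-\nu},
\]
together with the standard two-sided bound on the modified Bessel function, namely $I_\nu(z)\leq C z^\nu(1+z)^{-\nu-1/2}e^{z}$ for $z\ge 0$. Using the algebraic identity $\tfrac{x^2+y^2}{4t}-\tfrac{xy}{2t}=\tfrac{(x-y)^2}{4t}$, one obtains
\[
\mathbf{T}_t(x,y) \leq \frac{C}{t}\Big(1+\tfrac{xy}{2t}\Big)^{-\nu-1/2} e^{-(x-y)^2/(4t)}.
\]
Now I would split into the two regimes of \eqref{measure}: if $\sqrt{t}\leq 2x$ and $|x-y|\leq\sqrt{t}$ then $y\simeq x$, so $xy/(2t)\gtrsim x^2/t\gtrsim 1$ and the factor $(1+xy/(2t))^{-\nu-1/2}$ produces the $x^{-(2\nu+1)}t^{\nu+1/2}$ needed to match $1/\mu(B(x,\sqrt t))$; if $\sqrt{t}>2x$ the factor is bounded and the bound $t^{-\nu-1}$ matches $1/\mu(B(x,\sqrt t))$; if $|x-y|>\sqrt{t}$ one absorbs the discrepancy into the Gaussian at the cost of replacing $c$ by a smaller constant. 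Part (b) follows the same scheme after differentiating in $y$; using $I_\nu'(z)=\tfrac12(I_{\nu-1}(z)+I_{\nu+1}(z))$ produces an additional factor of order $1/\sqrt t$ on the right-hand side.

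For part (c) the idea is a domination principle: since $\mathcal L$ may be regarded as the Dirichlet extension of $\mathbf L$ to $(0,1)$, one has $0\le\mathcal T_t(x,y)\le\mathbf T_t(x,y)$ on $(0,1)^2$ (this can be seen either from the Feynman--Kac representation for the Bessel diffusion killed upon reaching $\{x=1\}$, or via comparison of the associated Dirichlet forms and the Beurling--Deny criteria). Combined with (a), this yields $\mathcal T_t(x,y)\le C\mu(B(x,\sqrt t))^{-1}e^{-c(x-y)^2/t}$. Finally, a short case analysis on $(0,1)^2$ with $0<t\le 1$ shows $\mu(B(x,\sqrt t))\gtrsim \sqrt{t}(t\vee xy)^{\nu+1/2}$, up to absorbing a portion of the Gaussian in the region where $|x-y|$ is large compared to $\sqrt{xy}$; this delivers (c).

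For part (d) I use the spectral expansion $\mathcal T_t(x,y)=\sum_{n\ge 1}e^{-t\lambda_{n,\nu}^2}\phi_n^\nu(x)\phi_n^\nu(y)$ and factor out the leading term. The spectral gap $\lambda_{n,\nu}^2\ge\lambda_{1,\nu}^2+cn^2$ combined with polynomial bounds $|\phi_n^\nu(x)|\le Cn^\alpha(1-x)$ (obtained from the explicit form of $\phi_n^\nu$, the vanishing $\phi_n^\nu(1)=0$, and the Bessel asymptotics \eqref{eq:asymp_inf} together with \eqref{eq:zero-cte}) makes the tail $\sum_{n\ge 2}$ much smaller than the $n=1$ term for $t\ge 1$. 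It then remains to see $\phi_1^\nu(x)\simeq(1-x)$ on $[0,1]$: $\phi_1^\nu$ is smooth and strictly positive on $(0,1)$, vanishes at $x=1$, and by \eqref{eq:deriv} its derivative at $x=1$ is a nonzero multiple of $J_{\nu+1}(\lambda_{1,\nu})\ne 0$, so $\phi_1^\nu$ vanishes to exactly first order at $x=1$.

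The main obstacle is the careful case-by-case calibration in (a) matching the Bessel-function asymptotics to the piecewise definition of $\mu(B(x,\sqrt t))$, in particular the bookkeeping of the Gaussian constants when the decay factor has to compensate for a mismatch between $x$ and $y$ or between $\sqrt{t}$ and $x$. Parts (c) and (d) are then largely organizational once the semigroup domination and the spectral gap of $\mathcal L$ are in hand.
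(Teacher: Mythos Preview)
Your approach is sound, but it is worth noting that the paper does not actually prove this lemma: its entire proof consists of two citations, referring to \cite[Lemma~4.3]{DPW} for (a) and (b) and to \cite{Nowak-Roncal-Sharp} for (c) and (d). So there is no ``paper's argument'' to compare against in substance; what you have written is essentially a self-contained outline of results that the authors import wholesale.

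That said, your route for (c) is genuinely different from the one in the cited reference. The paper \cite{Nowak-Roncal-Sharp} establishes sharp two-sided heat kernel bounds for $\mathcal T_t$ directly, by careful analysis of the eigenfunction expansion. Your argument instead obtains the upper bound as a corollary of (a) via the domination $0\le\mathcal T_t(x,y)\le\mathbf T_t(x,y)$ (valid because $\mathcal L$ is the Dirichlet restriction of $\mathbf L$ to $(0,1)$), followed by the algebraic check that $\mu(B(x,\sqrt t))^{-1}$ is controlled by $t^{-1/2}(t\vee xy)^{-\nu-1/2}$ after sacrificing part of the Gaussian. This is shorter and more conceptual for the upper bound alone, though it does not yield the matching lower bound that \cite{Nowak-Roncal-Sharp} obtains. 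For (d) your spectral-gap argument is the standard one and is presumably close to what the cited reference does; the only point requiring a little care is the uniform bound $|\phi_n^\nu(x)|\le Cn^\alpha(1-x)$ on all of $(0,1)$, which follows from bounding $|(\phi_n^\nu)'|$ via \eqref{eq:deriv} and the Bessel asymptotics, and the lower bound for $1\le t\le T_0$ (some fixed $T_0$), which can be handled by compactness once positivity of $\mathcal T_t$ and the first-order vanishing of $\phi_1^\nu$ at $x=1$ are known.
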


    \begin{proof}
    For (a) and (b) see \cite[Lemma~4.3]{DPW}, while for (c) and (d) see \cite{Nowak-Roncal-Sharp}.
    \end{proof}

Observe that, from \eqref{defTtilda}, we can express $R_t^{[j]}$, $j=1,2,3$, as integral operators.
    \begin{lem}\label{lem:Rj's}
    Let $R_t^{[j]}(x,y)$ denote the integral kernel of $R_t^{[j]}$, $j=1,2,3$. Then there is a constant $C$ such that for all $x,y\in \mc I_0^{**}, 0<t<1$,
    $$ |R_t^{[j]}(x,y)|\leq C.$$
    \end{lem}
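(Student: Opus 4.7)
The plan is to bound each kernel $R_t^{[j]}(x,y)$, $j=1,2,3$, pointwise, starting from the integral representations that one reads off from \eqref{formula1}, namely
$$R_t^{[1]}(x,y)=\int_0^t\!\!\int_0^\infty \mathbf T_{t-s}(x,z)\,\rho''(z)\,\widetilde{\mc T}_s(z,y)\,d\mu(z)\,ds$$
(and the analogous expressions for $R_t^{[2]}$, $R_t^{[3]}$), combined with a geometric observation about $\rho$. Since $\mc I_0^{**}$ is properly contained in $\mc I_0^{***}$, the smooth cutoff $\rho$ may (and tacitly will) be chosen so that $K:=\mathrm{supp}(\rho')\cup\mathrm{supp}(\rho'')$ is a compact subset of $[c_1,c_2]\subset(0,1)$ with $\mathrm{dist}(K,\mc I_0^{**})\ge\delta$ for some $c_1,\delta>0$. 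Consequently, for every $x,y\in\mc I_0^{**}$ and $z\in K$ one has $|x-z|\ge\delta$, $|y-z|\ge\delta$, and $z\ge c_1$.

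With this separation in hand the estimates become routine bookkeeping. For $0<s<t<1$, Lemma~\ref{Lem:Gaussian}(a) together with the measure lower bound $\mu(B(x,r))\ge c r^{2\nu+2}$ (immediate from \eqref{measure} since $\nu>-1/2$), and Lemma~\ref{Lem:Gaussian}(c) together with $s\vee zy\ge s$, yield
$$\mathbf T_{t-s}(x,z)\le\frac{C\,e^{-c\delta^2/(t-s)}}{(t-s)^{\nu+1}},\qquad \widetilde{\mc T}_s(z,y)=\mc T_s(z,y)\le\frac{C\,e^{-c\delta^2/s}}{s^{\nu+1}},$$
uniformly in $x,y\in\mc I_0^{**}$ and $z\in K$. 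Since the map $r\mapsto r^{-(\nu+1)}e^{-c\delta^2/r}$ is bounded on $(0,\infty)$ (it tends to $0$ at both endpoints, using $\nu+1>0$), both factors are uniformly $O(1)$. As $\mu(K)<\infty$ and $\rho''$ is bounded, integrating first in $z$ and then in $s\in(0,t)$ with $t\le 1$ produces $|R_t^{[1]}(x,y)|\le C$. For $R_t^{[3]}$ exactly the same calculation works, after absorbing the bounded factor $(2\nu+1)/z\le(2\nu+1)/c_1$ on $K$. For $R_t^{[2]}$ one replaces Lemma~\ref{Lem:Gaussian}(a) with Lemma~\ref{Lem:Gaussian}(b), picking up an extra $(t-s)^{-1/2}$; the resulting factor $(t-s)^{-(\nu+3/2)}e^{-c\delta^2/(t-s)}$ is still bounded on $(0,\infty)$ since $\nu+3/2>0$, so the same argument goes through.

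The main subtlety is that $x$ or $y$ may approach $0$, where the measure $\mu$ is singular and the Bessel heat kernels have delicate behavior. The positive distance $\delta$ built into the choice of $\rho$ is precisely what converts this potential singularity into exponential decay in $s$ and $t-s$, which dominates any polynomial blow-up coming from $\mu(B(x,\sqrt{t-s}))^{-1}$ or $(s\vee zy)^{-(\nu+1/2)}$. Beyond this, the argument is essentially routine.
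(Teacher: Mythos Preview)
Your proof is correct and follows essentially the same approach as the paper's own proof: both exploit the fact that $\rho'$ and $\rho''$ are supported on a compact set $K\subset(0,1)$ lying at positive distance from $\mc I_0^{**}$, so that the Gaussian factors $e^{-c|x-z|^2/(t-s)}$ and $e^{-c|y-z|^2/s}$ become $e^{-c'/(t-s)}$ and $e^{-c'/s}$, which dominate the polynomial singularities $(t-s)^{-(\nu+1)}$ and $s^{-(\nu+1)}$ coming from the heat kernel bounds in Lemma~\ref{Lem:Gaussian}. Your write-up is in fact a bit more explicit than the paper's about why the separation $\mathrm{dist}(K,\mc I_0^{**})\ge\delta$ can be arranged and about why the resulting one-variable functions are bounded, but the underlying argument is identical.
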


    \begin{proof}
    Note that  $ \operatorname{supp}\, \rho'' \subset (\frac{1}{2}, \frac{2}{3})$. From Lemma \ref{Lem:Gaussian} and \eqref{measure},
        \begin{equation*}
        \begin{split}
        | R_t^{[1]}(x,y)|& \leq C\int_0^t \int_0^\infty
        \frac{e^{-c|x-z|^2\slash (t-s)}}{\mu (B(x,\sqrt{t-s}))}|\rho''(z)|
        \frac{e^{-c|y-z|^2\slash s}}{(s\vee yz)^{\nu+1/2}\sqrt{s}}\, d\mu (z)\, ds\\
        &\leq C\int_0^t \int_{1\slash 2}^{2\slash {3}}
        \frac{e^{-c'\slash (t-s)}}{\mu (B(x,\sqrt{t-s}))}
        \frac{e^{-c'\slash s}}{(s\vee yz)^{\nu+1/2}\sqrt{s}}\, d\mu (z)\, ds\\
        &\leq C\int_0^t
        \frac{e^{-c'\slash (t-s)}}{(\sqrt{t-s})^{2\nu+2}}
        \frac{e^{-c'\slash s}}{(\sqrt{s})^{2\nu+2}}\, ds\leq C.
        \end{split}
        \end{equation*}
    The proofs for $| R_t^{[2]}(x,y)|$ and  $| R_t^{[3]}(x,y)|$ are done in a similar way.
    \end{proof}

Lemma \ref{lem:Rj's} implies that the operators $\sup_{0<t<1} \chi_{\mc I_0^{**}}|R^{[j]}_t f|$, $j=1,2,3$, are bounded on $L^1(\mc I_0^{**}, \mu)$. As a consequence of identity \eqref{formula1} we arrive at the following result.

    \begin{cor}\label{heat}
    There is a constant $C$ such that for $f\in L^1((0,\infty),\mu)$ with $\operatorname{supp}f\subset\mc I_0^{**}$ we have
    $$ \Big\|\sup_{0<t<1} |\mathcal T_t f -\mathbf T_t f|\Big\|_{L^1(\mc I_0^{**},\mu)}\leq C\| f\|_{L^1(\mc I_0^{**},\mu)}.$$
    \end{cor}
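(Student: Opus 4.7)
The plan is to read Corollary \ref{heat} off directly from Duhamel's identity \eqref{formula1} combined with the uniform kernel bound in Lemma \ref{lem:Rj's}. Given $f \in L^1((0,\8), \mu)$ with $\supp f \subseteq \mc I_0^{**}$, the plan has three steps: (i) show that the left-hand side of \eqref{formula1} reduces to $\mc T_t f(x) - \mathbf T_t f(x)$ for $x \in \mc I_0^{**}$; (ii) bound each $R_t^{[j]} f(x)$ uniformly in $0<t<1$ by a multiple of $\|f\|_{L^1(\mc I_0^{**},\mu)}$; and (iii) integrate the resulting pointwise estimate over the finite-measure set $\mc I_0^{**}$.

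For step (i), I would exploit that $\rho \equiv 1$ on $\mc I_0^{**}$ in two ways. First, $\supp f \subseteq \mc I_0^{**} \subseteq (0,1)$ together with the remark after \eqref{defTtilda} gives $\widetilde{\mc T}_t f(x) = \mc T_t f(x)$ for $x \in (0,1)$; combined with $\rho(x) = 1$ on $\mc I_0^{**}$, this yields $M_\rho \widetilde{\mc T}_t f(x) = \mc T_t f(x)$ for $x \in \mc I_0^{**}$. Second, $M_\rho f = f$ since $\supp f \subseteq \mc I_0^{**}$, so $\mathbf T_t M_\rho f(x) = \mathbf T_t f(x)$. Substituting into \eqref{formula1} gives, for $x \in \mc I_0^{**}$ and $0<t<1$,
$$\mc T_t f(x) - \mathbf T_t f(x) = R_t^{[1]} f(x) + R_t^{[2]} f(x) + R_t^{[3]} f(x).$$

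For steps (ii) and (iii), Lemma \ref{lem:Rj's} provides a constant $C$ with $|R_t^{[j]}(x,y)| \leq C$ for all $x,y \in \mc I_0^{**}$, $0 < t < 1$, $j = 1,2,3$; hence $\sup_{0<t<1} |R_t^{[j]} f(x)| \leq C\|f\|_{L^1(\mc I_0^{**},\mu)}$, and the same bound therefore holds for $\sup_{0<t<1}|\mc T_t f(x) - \mathbf T_t f(x)|$. Integrating in $x$ over $\mc I_0^{**}$ (a set of finite $\mu$-measure) produces the desired $L^1$ estimate. Since the substantive work has already been carried out in Lemmas \ref{domain1}--\ref{lem:Rj's} and in the derivation of \eqref{formula1}, there is no real obstacle in the corollary itself; the only care required is in checking the identifications of $\wt{\mc T}_t f$ with $\mc T_t f$ and of $M_\rho f$ with $f$ that are made possible by $\rho \equiv 1$ on $\mc I_0^{**}$.
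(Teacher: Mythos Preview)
Your proposal is correct and follows essentially the same approach as the paper: the paper also deduces the corollary directly from Duhamel's identity \eqref{formula1} together with the uniform kernel bound of Lemma \ref{lem:Rj's}, noting that the maximal operators $\sup_{0<t<1}\chi_{\mc I_0^{**}}|R_t^{[j]}f|$ are thereby bounded on $L^1(\mc I_0^{**},\mu)$. Your explicit verification that $M_\rho\widetilde{\mc T}_t f = \mc T_t f$ and $\mathbf T_t M_\rho f = \mathbf T_t f$ on $\mc I_0^{**}$ (using $\rho\equiv 1$ there) is exactly the identification the paper relies on implicitly.
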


    \begin{proof}[Proof of Theorem \ref{theorem}]
    We shall use the principle of subordination to pass the estimates from the heat kernels to the Poisson kernels. For $0<t<1$, $x\in \mc I_0^{**}$ and $f\in L^1((0,\infty),\mu)$ with $\operatorname{supp}f\subset \mc I_0^{**}$,
        \begin{equation*}
        \begin{split}
        |\mathcal P_tf(x)-\mathbf P_tf(x)| &= C \Big| \int_0^\infty \frac{e^{-u}}{\sqrt u}
        (\mathcal T_{t^2\slash(4u)} -\mathbf T_{t^2\slash (4u)} ) f(x)\, du\Big|\\
        &\leq  C \Big| \int_{t^2\slash 4}^\infty \frac{e^{-u}}{\sqrt u}
        (\mathcal T_{t^2\slash (4u)} -\mathbf T_{t^2\slash (4u)} ) f(x)\, du\Big|\\
        & \quad
        + C \Big| \int_0^{t^2\slash 4}\frac{e^{-u}}{\sqrt u}
        (\mathcal T_{t^2\slash (4u)} -\mathbf T_{t^2\slash (4u)} ) f(x)\, du\Big|
        \\
        &\leq C  \int_{t^2\slash 4}^\infty \frac{e^{-u}}{\sqrt u}
        \sup_{0<s<1} |(\mathcal T_{s} -\mathbf T_{s} ) f(x)|\, du \\
        & \ \ +C \Big| \int_0^{t^2\slash 4}\frac{e^{-u}}{\sqrt u}
        \int_{\mc I_0^{**}}
        (\mathcal T_{t^2\slash (4u)}(x,y) -\mathbf T_{t^2\slash (4u)}(x,y) ) f(y)\, d\mu (y)\, du\Big|.
        \\
        \end{split}
        \end{equation*}
    Note that for $0<u<t^2\slash 4$ we have $0\leq \mathcal T_{t^2\slash (4u)}(x,y), \mathbf T_{t^2\slash (4u)}(x,y) \leq C$, see Lemma \ref{Lem:Gaussian}. Thus,
        \begin{equation*}
        \begin{split}
        |\mathcal P_tf(x)-\mathbf P_tf(x)|&
        \leq C \sup_{0<s<1} |(\mathcal T_{s} -\mathbf T_{s} ) f(x)| \int_0^\infty \frac{e^{-u}}{\sqrt u}
        \, du\\
        & \ \ +C  \int_0^{t^2\slash 4}\frac{e^{-u}}{\sqrt u}
        \int_{\mc I_0^{**}}
        |f(y)|\, d\mu (y)\, du
        \\
        &\leq  C \sup_{0<s<1} |(\mathcal T_{s} -\mathbf T_{s} ) f(x)| +C \| f\|_{L^1({\mc I_0^{**}}, \mu)}.
        \end{split}
        \end{equation*}
    The proof of Theorem \ref{theorem} is finished by applying Corollary \ref{heat}.
    \end{proof}

%%%%  Section 5
\section{Proof of Theorem \ref{thmA}}\label{sec4}

%%%%%%%%%%%%%%%%%
\subsection{Maximal function estimates}
%%%%%%%%%%%%%%%%%

In this subsection we provide some maximal function estimates that we shall use in the proof of Theorem \ref{thmA}.

We readily obtain from  Lemma \ref{Lem:SharpNatural} the following.
    \begin{lem}\label{prop:MinftyNatural}
    The operator $\mathcal{M}^\infty f(x):=\sup_{t>1}|\mathcal{P}_t f(x)|$ is bounded from $L^1((0,1), \mu)$ into itself.
    \end{lem}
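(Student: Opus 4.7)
The plan is to apply the large-$t$ estimate from Lemma \ref{Lem:SharpNatural} directly. For $t>1$ and $x,y\in (0,1)$, that lemma yields
$$\mathcal{P}_t(x,y) \leq C (1-x)(1-y) e^{-t\lambda_{1,\nu}},$$
so for $f\in L^1((0,1),\mu)$ and $t>1$,
$$|\mathcal{P}_t f(x)| \leq C(1-x) e^{-t\lambda_{1,\nu}} \int_0^1 (1-y)|f(y)|\, d\mu(y) \leq C(1-x) e^{-\lambda_{1,\nu}} \|f\|_{L^1((0,1),\mu)}.$$
Here I used $1-y \leq 1$ to pull the $L^1$ norm out, and monotonicity of $e^{-t\lambda_{1,\nu}}$ in $t$ to replace $t>1$ by $t=1$ after taking the supremum.

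Taking the supremum over $t>1$ on the left and integrating against $d\mu$ on $(0,1)$ gives
$$\|\mathcal{M}^\infty f\|_{L^1((0,1),\mu)} \leq C e^{-\lambda_{1,\nu}} \|f\|_{L^1((0,1),\mu)} \int_0^1 (1-x)\, x^{2\nu+1}\, dx.$$
Since $\nu>-1/2$, the last integral is a finite constant (it equals $B(2\nu+2,2)$), and the desired $L^1$--$L^1$ boundedness follows.

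There is no real obstacle here: the decay $e^{-t\lambda_{1,\nu}}$ from the spectral gap absorbs the time variable, and the factor $(1-x)(1-y)$ produces integrable tails against $d\mu$. The only minor point to check is that $\int_0^1 (1-x) x^{2\nu+1}\, dx<\infty$, which holds for every $\nu>-1$ and in particular in our range $\nu>-1/2$.
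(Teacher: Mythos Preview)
Your proof is correct and is exactly the argument the paper has in mind: the paper states only that the lemma ``readily'' follows from Lemma~\ref{Lem:SharpNatural}, and your write-up supplies precisely those details, using the large-$t$ bound $\mathcal{P}_t(x,y)\leq C(1-x)(1-y)e^{-t\lambda_{1,\nu}}$ and the finiteness of $\int_0^1(1-x)\,d\mu(x)$.
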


    \begin{lem}\label{prop:Istarstar}
    Let $j\geq 1$. The maximal operator $\sup_{{\mu(\mc I^{**}_j)}<t<1}|\mathcal{P}_tf(x)|$ is bounded from $L^1(\mathcal{I}_{ j}^{**},\mu)$ into itself.
    \end{lem}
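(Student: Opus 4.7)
The plan is to reduce the maximal bound to a kernel estimate. Since $\mc{P}_t(x,y)\ge 0$ (evident from the sharp estimate in Lemma \ref{Lem:SharpNatural}), one has $|\mc{P}_t f(x)|\le \mc{P}_t |f|(x)$, and therefore
$$
\sup_{\mu(\mc I_j^{**})<t<1}|\mc{P}_t f(x)|\le \int_{\mc I_j^{**}} K_j(x,y)\,|f(y)|\,d\mu(y),\qquad K_j(x,y):=\sup_{\mu(\mc I_j^{**})<t<1}\mc{P}_t(x,y).
$$
By Fubini it then suffices to establish the uniform bound
$$
\sup_{y\in\mc I_j^{**}}\int_{\mc I_j^{**}} K_j(x,y)\,d\mu(x)\le C,
$$
with $C$ independent of $j\ge 1$.

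To verify this, I would use the following geometric facts: for $j\ge 1$ and $x,y\in\mc I_j^{**}$, one has $x,y\simeq 1$, $1-x,\,1-y\simeq 2^{-j}$, $d\mu(x)\simeq dx$, $|\mc I_j^{**}|\simeq 2^{-j}$, and $\mu(\mc I_j^{**})\simeq 2^{-j}$. Inserting these into Lemma \ref{Lem:SharpNatural}, for every $t\in(\mu(\mc I_j^{**}),1)$, the first factor is bounded, and since $t\gtrsim 2^{-j}\gtrsim 1-x,\,1-y$, the middle factor satisfies
$$
\frac{(1-x)(1-y)}{t^2+(1-x)^2+(1-y)^2}\lesssim \frac{2^{-2j}}{t^2}.
$$
Combining with the last factor,
$$
\mc{P}_t(x,y)\lesssim \frac{2^{-2j}}{t\,(t^2+|x-y|^2)}.
$$
The right-hand side is decreasing in $t$, so the supremum over $t>\mu(\mc I_j^{**})\simeq 2^{-j}$ is attained at $t\simeq 2^{-j}$, giving
$$
K_j(x,y)\lesssim \frac{2^{-j}}{2^{-2j}+|x-y|^2}.
$$

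Finally, integrating against $d\mu(x)\simeq dx$ over $\mc I_j^{**}$ and substituting $r=|x-y|$, $s=2^j r$,
$$
\int_{\mc I_j^{**}} K_j(x,y)\,d\mu(x)\lesssim \int_0^{C\,2^{-j}} \frac{2^{-j}}{2^{-2j}+r^2}\,dr=\int_0^{C}\frac{ds}{1+s^2}\le C,
$$
uniformly in $y\in\mc I_j^{**}$ and in $j\ge 1$. This yields the desired $L^1$-boundedness.

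There is no real obstacle here; the only point requiring some care is to check that the sharp two-sided estimate in Lemma \ref{Lem:SharpNatural} can indeed be evaluated in the range $t\in(\mu(\mc I_j^{**}),1)$ with $x,y\in\mc I_j^{**}$ so that the simplifications above are valid, which follows from the elementary facts that $x,y\simeq 1$ and $1-x,1-y\lesssim t$ throughout this regime.
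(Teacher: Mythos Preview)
Your proof is correct and follows essentially the same strategy as the paper: reduce to a uniform bound on $\int_{\mc I_j^{**}}\sup_t \mc P_t(x,y)\,d\mu(x)$ and invoke Lemma~\ref{Lem:SharpNatural}. The paper's execution is slightly cruder and shorter: it simply observes that $\mc P_t(x,y)\le C t^{-1}$ for $x,y\in\mc I_j^{**}$, $j\ge 1$, so the supremum over $t>\mu(\mc I_j^{**})$ is at most $C\mu(\mc I_j^{**})^{-1}$, and integrating over $\mc I_j^{**}$ gives $C$ directly, without the finer pointwise estimate and change of variables you carry out.
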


    \begin{proof}
    It is enough to see that, for $j\ge 1$,
    $$\int_{\mathcal{I}^{**}_{j}}\sup_{\mu(\mc I^{**}_j)<t<1}\mathcal{P}_t(x,y)\,d\mu(x)\leq C,$$
    for all $y\in\mathcal{I}^{**}_{j}$. The estimate is a direct consequence of Lemma \ref{Lem:SharpNatural} since $\mathcal{P}_t(x,y)\leq Ct^{-1}$
    for all $x,y\in\mathcal{I}_j^{**}$, $j\geq1$.
    \end{proof}

    \begin{lem}\label{prop:M0Natural}
    Let $\mathcal{M}^0f(x)=\sup_{0<t<1} |\mathcal{P}_t f(x)|$. If $f$ is in $L^1(\mathcal{I}_{ j}^{*},\mu)$, $j\ge0$, then
        \begin{equation}\label{bgf}
        \int_0^1\chi_{(\mathcal{I}_j^{**})^c}(x) \, \mathcal{M}^0f(x)\,d\mu(x)\le C\|f\|_{L^1(\mathcal{I}_j^*,\mu)},
        \end{equation}
    where $C$ is a constant independent of $f$ and $j$.
    \end{lem}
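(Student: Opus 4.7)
The plan is to use that $f$ is supported in $\mathcal{I}_j^*$, so that
$$\mathcal{M}^0 f(x) \le \int_{\mathcal{I}_j^*} \Big(\sup_{0<t<1}\mathcal{P}_t(x,y)\Big) |f(y)|\,d\mu(y),$$
and Fubini reduces \eqref{bgf} to the uniform kernel bound
$$\sup_{y\in\mathcal{I}_j^*}\int_{(\mathcal{I}_j^{**})^c}\sup_{0<t<1}\mathcal{P}_t(x,y)\,d\mu(x)\le C,$$
with $C$ independent of $j\ge 0$. For the pointwise control of $\sup_{0<t<1}\mathcal{P}_t(x,y)$, I would apply Lemma \ref{Lem:SharpNatural} and take the supremum of each of its three factors in $t$ separately: the first two factors are decreasing in $t$, with suprema $(x^2+y^2)^{-(\nu+1/2)}$ and $(1-x)(1-y)/((1-x)^2+(1-y)^2)$, while $t/(t^2+|x-y|^2)$ is maximized at $t=|x-y|\in(0,1)$ with value $1/(2|x-y|)$. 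The elementary inequality $x^{2\nu+1}\le(x^2+y^2)^{\nu+1/2}$, valid since $\nu>-1/2$, then absorbs the density $d\mu(x)=x^{2\nu+1}\,dx$ into the first factor, and the claim reduces to
$$I_j(y):=\int_{(\mathcal{I}_j^{**})^c}\frac{(1-x)(1-y)}{(1-x)^2+(1-y)^2}\cdot\frac{dx}{|x-y|}\le C.$$

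For $j=0$, the set $(\mathcal{I}_0^{**})^c$ lies at a fixed positive distance from $\mathcal{I}_0^*$, so $|x-y|$ is bounded below by a constant and the boundary factor by $1/2$; hence $I_0(y)\le C$ trivially. For $j\ge 1$, one has $1-y\simeq 2^{-j}$ for $y\in\mathcal{I}_j^*$, and I would split $(\mathcal{I}_j^{**})^c=(0,a_j)\cup(b_j,1)$, where $1-a_j\simeq 1-b_j\simeq 2^{-j}$. On $(0,a_j)$ the inequality $1-x\ge 1-y$ bounds the middle factor by $(1-y)/(1-x)$, and the partial fraction identity $\tfrac{1}{(1-x)(y-x)}=\tfrac{1}{1-y}\bigl[\tfrac{1}{y-x}-\tfrac{1}{1-x}\bigr]$ gives
$$\int_0^{a_j}\frac{(1-y)\,dx}{(1-x)(y-x)}=\ln\frac{y(1-a_j)}{y-a_j}=O(1),$$
since $y-a_j\simeq 1-a_j\simeq 2^{-j}$. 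On $(b_j,1)$, where now $1-x\le 1-y$, the substitution $u=1-x$, $v=1-y$ reduces the integral to $v^{-1}\int_0^{1-b_j}u/(v-u)\,du=-(1-b_j)/v+\ln(v/(v-(1-b_j)))=O(1)$, by the same comparability $v\simeq 1-b_j\simeq v-(1-b_j)\simeq 2^{-j}$.

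The main obstacle is the apparent logarithmic divergence in $j$: a crude bound $\sup_t\mathcal{P}_t(x,y)\le C/|x-y|$ integrated over $(0,a_j)$ would give $\log 2^j$. The decisive cancellation comes from the boundary factor $(1-x)(1-y)/((1-x)^2+(1-y)^2)$, which provides just the extra decay in $1-x$ needed to turn the logarithm into a uniformly bounded constant, a cancellation made explicit by the partial-fraction identity above.
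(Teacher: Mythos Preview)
Your proof is correct and essentially the same as the paper's: both reduce to a uniform kernel bound via Lemma~\ref{Lem:SharpNatural}, dispose of $j=0$ trivially, and for $j\ge 1$ split $(\mathcal{I}_j^{**})^c$ into a left piece $(0,a_j)$ and a right piece $(b_j,1)$, with the boundary factor $(1-x)(1-y)/((1-x)^2+(1-y)^2)$ supplying exactly the extra decay needed to suppress the logarithmic divergence. The only difference is cosmetic: the paper first uses the comparabilities $|x-y|\simeq 1-x$ on $(0,a_j)$ and $|x-y|\simeq 2^{-j}$ on $(b_j,1)$ to merge the two $t$-dependent denominators \emph{before} taking the supremum in $t$, obtaining the pure power-law bounds $2^{-j}/(1-x)^2$ and $2^{j}$ that integrate immediately, whereas you take the supremum of each factor separately and then carry out the integration by partial fractions.
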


    \begin{proof} We consider two cases.

   \noindent
   {\bf Case 1: $j=0$.} We need to show that there exists a universal constant $C$ such that
   $$
  \chi_{\mathcal{I}_0^{*}}(y) \int_0^1\chi_{(\mathcal{I}_0^{**})^c}(x)\sup_{0<t<1}\mathcal{P}_t(x,y)\,d\mu(x)\leq C.
   $$

   \noindent
   Indeed, for $y\in\mathcal{I}_0^\ast$ and $x\in(\mathcal{I}_0^{**})^c\cap (0,1)$, we have $|x-y|>c$, $x\simeq 1$ and $(1-y)\simeq 1$. Under these assumptions, by Lemma \ref{Lem:SharpNatural}, $\mc P_t(x,y) \leq C$. The required estimate follows by noticing that $d\mu(x)\simeq dx$.

    \noindent{\bf Case 2. $j\ge1$.} Let $x\in(\mathcal{I}_j^{**})^c\cap (0,1)$. We analyze two subcases:

    \noindent{\bf Subcase 2.1: $1-2^{-j-1}<x<1$}. Note that, for $y\in\mathcal{I}_j^*$, $y\simeq 1$, $x\simeq 1$, $1-x<1-y\simeq  |\mathcal{I}_j|$ and $|x-y|\simeq |\mathcal{I}_j| \simeq \mu(\mc I_j)$.
    By Lemma \ref{Lem:SharpNatural} and the assumptions,
        $$\mathcal{M}^0f(x)\leq C \sup_{0<t<1}\int_{\mathcal{I}_j^*}\frac{t|\mathcal{I}_j|^2}{(t^2+|\mathcal{I}_j|^2)^2}%\frac{t}{t^2+|x-y|^2}
        |f(y)|\,d\mu(y)\leq C\frac{1}{2^{-j}}\|f\|_{L^1(\mathcal{I}_j^*,\mu)}.$$

    \noindent{\bf Subcase 2.2: $0<x<1-2^{-j}$}. Observe that for $y\in\mathcal{I}_j^*$, $y\simeq 1$, $|x-y|\simeq 1-x\ge1-y\simeq  |\mathcal{I}_j|\simeq 2^{-j}$. Again by Lemma \ref{Lem:SharpNatural}, and an analogous reasoning as in the previous subcase, we get
        $$\mathcal{M}^0f(x)\leq C \int_{\mathcal{I}_j^*}\sup_{0<t<1}\frac{t(1-x)2^{-j}}{(t^2+(1-x)^2)^2}|f(y)|\,d\mu(y)
        \leq C \frac{2^{-j}}{(1-x)^{2}} \|f\|_{L^1(\mathcal{I}_j^*,\mu)}.$$
By plugging both estimates into the integral \eqref{bgf}, we obtain the desired result for $j\geq 1$.
    \end{proof}

Now we make use of Theorem \ref{th:Uchiyama}. Consider the space of homogeneous type $(\mathcal{I}_{j}^{**},d_\mu,\mu)$, where
        $$d_\mu(x,y)= \Big|\int_x^y \, d\mu(z)\Big|,\qquad x,y\in\mathcal{I}_j^{**}.$$
        For $j\geq 1$, it is clear that $d_{\mu}$ and the usual distance $|\cdot|$ are equivalent on $\mc I^{**}_j$, but this is not the case for $\mc I^{**}_0$.

Let us begin with the case $j\ge1$. Observe that $\mu(\mc I^{**}_j) \simeq 2^{-j}$. Set $\mathcal{K}_{j}(t,x,y)=\mathcal{P}_t(x,y)$, for $0<t<\mu(\mc I_j^{**})$, $x,y\in\mathcal{I}_{j}^{**}$. Obviously,
    \begin{equation}\label{eq:identity1}
    \sup_{0<t<\mu(\mc I^{**}_j)}\left|\int_{\mathcal{I}_{ j}^{**}}\mathcal{K}_{j}(t,x,y)f(y)\,d\mu(y)\right|=\sup_{0<t<\mu(\mc I^{**}_j)}|\mathcal{P}_tf(x)|,
    \end{equation}
    for $x\in\mathcal{I}_{j}^{**}$.
In Lemma \ref{Prop:UchiNonLebesgue} we check the assumptions of Theorem \ref{th:Uchiyama} on each $\mathcal{I}_j^{**}$, see \eqref{UchiRep1}--\eqref{UchiRep3}.

    \begin{lem}\label{Prop:UchiNonLebesgue}
    There exists a constant $A>0$, independent of $j\geq 1$ such that for $0<t<\mu(\mc I^{**}_j)$ and $x,y, z\in \mc I^{**}_j$ we have that:
        \begin{enumerate}[(i)]
            \item $\mathcal{K}_{j}(t,x,x)>\frac{1}{At}$;
            \item $0\leq \mathcal{K}_{j}(t,x,y)\leq \frac{A}{t}\left(1+\frac{|x-y|}{t}\right)^{-2}$;
            \item if $|y-z|\leq\frac{1}{4A}(t+|x-y|)$, then
                $$|\mathcal{K}_{j}(t,x,y)-\mathcal{K}_{j}(t,x,z)|\leq A \frac{|y-z|}{t^2}\left(1+\frac{|x-y|}{t}\right)^{-2}.$$
        \end{enumerate}
    \end{lem}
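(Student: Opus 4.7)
The overall plan is to read off each of (i)--(iii) directly from the sharp kernel estimates in Lemma \ref{Lem:SharpNatural} and the derivative estimate in Lemma \ref{PropositionSixNonLeb}, exploiting the fact that on $\mathcal{I}_j^{**}$ with $j\ge 1$ the relevant geometric quantities are essentially constant. Concretely, a brief geometric computation (using $\zeta=1/50$ in the definition of the starred intervals) shows that for every $x\in\mathcal{I}_j^{**}$ one has
\begin{equation*}
x\simeq 1,\qquad 1-x\simeq 2^{-j},\qquad |\mathcal{I}_j^{**}|\simeq \mu(\mathcal{I}_j^{**})\simeq 2^{-j},
\end{equation*}
with constants independent of $j$, and hence also $(xy)^{-\nu-1/2}\simeq 1$ and $d\mu\simeq dx$ on $\mathcal{I}_j^{**}$. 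Since $t<\mu(\mathcal{I}_j^{**})\simeq 2^{-j}\le 1$, the small-time case of Lemma \ref{Lem:SharpNatural} applies.

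For (i), putting $y=x$ in Lemma \ref{Lem:SharpNatural} and using $x\simeq 1$, one factor is $\simeq 1$; the middle factor $(1-x)^2/(t^2+2(1-x)^2)\simeq 1$ since $t\lesssim 2^{-j}\simeq 1-x$; and the last factor reduces to $1/t$. This gives $\mathcal{K}_j(t,x,x)\gtrsim 1/t$.

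For (ii), the upper half of Lemma \ref{Lem:SharpNatural} together with $x,y\simeq 1$ and $(1-x)(1-y)/(t^2+(1-x)^2+(1-y)^2)\le 1$ yields
\begin{equation*}
0\le\mathcal{K}_j(t,x,y)\le C\frac{t}{t^2+|x-y|^2}=\frac{C}{t}\cdot\frac{1}{1+(|x-y|/t)^2}\le \frac{C'}{t}\Bigl(1+\frac{|x-y|}{t}\Bigr)^{-2},
\end{equation*}
using the elementary inequality $(1+u)^2\le 2(1+u^2)$.

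For (iii), write $\mathcal{P}_t(x,y)-\mathcal{P}_t(x,z)=\int_z^y \partial_\eta\mathcal{P}_t(x,\eta)\,d\eta$. By symmetry $\mathcal{P}_t(x,\eta)=\mathcal{P}_t(\eta,x)$, so Lemma \ref{PropositionSixNonLeb} gives
\begin{equation*}
|\partial_\eta\mathcal{P}_t(x,\eta)|\le C\frac{(x\eta)^{-\nu-1/2}}{t^2+|x-\eta|^2}\le \frac{C'}{t^2+|x-\eta|^2}\qquad (\eta\in\mathcal{I}_j^{**}).
\end{equation*}
Under the hypothesis $|y-z|\le(t+|x-y|)/(4A)$, a standard case distinction (either $|x-y|\le t$ or $|x-y|>t$) shows that for every $\eta$ on the segment joining $y$ and $z$ we have $t^2+|x-\eta|^2\gtrsim t^2+|x-y|^2$, with constants uniform in $j$ provided $A$ is chosen large enough. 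Hence
\begin{equation*}
|\mathcal{K}_j(t,x,y)-\mathcal{K}_j(t,x,z)|\le C\frac{|y-z|}{t^2+|x-y|^2}\le C'\frac{|y-z|}{t^2}\Bigl(1+\frac{|x-y|}{t}\Bigr)^{-2},
\end{equation*}
which is the desired inequality (with $\gamma_2=1$, $\gamma_3=1$). Picking $A$ large enough to absorb all the implicit constants in the three items finishes the proof.

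The only mildly delicate point is (iii): one must ensure that the smallness condition $|y-z|\le(t+|x-y|)/(4A)$ really forces $|x-\eta|\simeq |x-y|+t$ uniformly along the segment, and that the constants one collects (from Lemma \ref{PropositionSixNonLeb}, from the factor $(x\eta)^{-\nu-1/2}$, and from the case analysis) can all be absorbed into a single universal $A$ independent of $j$; this is where the uniform estimates $x,y\simeq 1$ on $\mathcal{I}_j^{**}$ are essential.
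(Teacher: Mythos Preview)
Your proof is correct and follows essentially the same approach as the paper: both use Lemma~\ref{Lem:SharpNatural} for (i) and (ii), and for (iii) the symmetry of $\mathcal{P}_t$ together with the derivative bound of Lemma~\ref{PropositionSixNonLeb} and the mean-value theorem. The only cosmetic difference is in the organization of (iii): the paper first disposes of the range $2|y-z|\ge t$ by bounding the difference trivially by the sum and invoking (ii), and only then applies the derivative estimate when $2|y-z|<t$ (which gives $|x-\xi|\simeq|x-y|$ directly, without appealing to the hypothesis involving $A$); you instead use the derivative estimate uniformly and absorb the geometric comparison into the choice of $A$. Both routes are valid and equally short.
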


    \begin{proof}
    \textbf{\textit{(i)} and \textit{(ii)}.} Since $1-x\simeq 1-y\simeq  2^{-j}$ and $x\simeq y \simeq 1$, we apply directly Lemma \ref{Lem:SharpNatural}.

    \noindent\textbf{\textit{(iii)}.} By the assumption we have that $t+|x-y| \simeq t+|x-z|$ and, by \textit{(ii)},
        \begin{equation*}\label{easy0}
        \mathcal{K}_{j}(t,x,y)+\mathcal{K}_{j}(t,x,z)\leq \frac{C}{t}\left(1+\frac{|x-y|}{t}\right)^{-2}.
        \end{equation*}
    So ${(iii)}$ is proved when $2|y-z|\geq t$. Assume $2|y-z|<t$. By the mean-value theorem,
        $$|\mathcal{K}_{j}(t,x,y)-\mathcal{K}_{j}(t,x,z)|=|y-z|\left|\frac{\partial}{\partial y}\mathcal{P}_t(x,y)\Big|_{y=\xi}\right|,$$
    for some $\xi$ between $y$ and $z$. Recall that $\mc P_t(x,y)= \mc P_t (y,x)$. From Lemma \ref{PropositionSixNonLeb} we have that
        $$|\mathcal{K}_{j}(t,x,y)-\mathcal{K}_{j}(t,x,z)| \le C\frac{|y-z|}{t^2} \(1+\frac{|x-\xi|}{t}\)^{-2}.$$
    When $|x-y|\leq t$ the conclusion follows immediately. In the opposite case we have $|x-y|>t>2|y-z|$, which implies $|x-\xi|\simeq |x-y|$ and $(iii)$ is proved.
    \end{proof}

    \begin{cor}\label{th:UchiNonLeb}
    Let $H^1_{\mathrm{at}}(\mathcal{I}_j^{**})$ be the atomic Hardy space defined as in Section \ref{sec2} for the space of homogeneous type $(\mathcal{I}_{j}^{**},d_\mu,\mu)$, and $j\ge1$. Assume that $f\in L^1{(\mc I_j^{**},\mu)}$. Then
    $$
    \|f\|_{H^1_{\mathrm{at}}(\mathcal{I}_j^{**})}\simeq \Big\|\sup_{0<t<\mu(\mc I_j^{**})}|\mathcal{P}_tf(x)|\Big\|_{L^1(\mathcal{I}_j^{**}, \mu)}.
    $$
    \end{cor}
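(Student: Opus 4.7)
The plan is to realize $(\mc I_j^{**}, d_\mu, \mu)$ equipped with the kernel $\mc K_j(t,x,y) = \mc P_t(x,y)$, $0 < t < \mu(\mc I_j^{**})$, as a space of homogeneous type in the sense of Section \ref{sec2}, with structural constants independent of $j \geq 1$, and then to invoke Theorem \ref{th:Uchiyama}. Once the hypotheses \eqref{UchiRep1}--\eqref{UchiRep3} are verified with a universal constant $A$ and universal exponents $\gamma_1, \gamma_2, \gamma_3$, the identity \eqref{eq:identity1} shows that Uchiyama's maximal function $f^{(+)}$ on $(\mc I_j^{**}, d_\mu, \mu)$ coincides with $\sup_{0 < t < \mu(\mc I_j^{**})} |\mc P_t f(x)|$ (extending $f$ by zero), and the desired equivalence follows.

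First, I would establish the normalized ball bound $A^{-1}r \leq \mu(B_{d_\mu}(x,r)) \leq A r$ for $0 < r < \mu(\mc I_j^{**})$ with $A$ independent of $j \ge 1$. Because $\mc I_j^{**} \subset (1/2,1)$, the weight $x^{2\nu+1}$ is bounded above and below by absolute constants on $\mc I_j^{**}$, so $d\mu \simeq dx$ there and $d_\mu$ is comparable to the Euclidean distance uniformly in $j$. Hence $d_\mu$-balls are essentially Euclidean intervals whose $\mu$-measure is comparable to their $d_\mu$-diameter with universal constants.

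Second, I would verify \eqref{UchiRep1}--\eqref{UchiRep3} with $\gamma_1 = \gamma_2 = \gamma_3 = 1$. This is exactly the content of Lemma \ref{Prop:UchiNonLebesgue}, whose items $(i)$, $(ii)$, $(iii)$ are the three Uchiyama conditions phrased in the Euclidean distance $|x-y|$ instead of $d_\mu$. Since $d_\mu(x,y) \simeq |x-y|$ on $\mc I_j^{**}$ uniformly in $j$, each of those bounds translates into the corresponding $d_\mu$-bound after enlarging the constant $A$ by an absolute factor; in particular the exponents in $(1+|x-y|/t)^{-2}$ become $-1-\gamma_1$ and $-1-\gamma_3$ with $\gamma_1 = \gamma_3 = 1$, and the factor $|y-z|/t$ in $(iii)$ provides $\gamma_2 = 1$.

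The subtle point throughout is the \emph{uniformity in $j$} of the constant $A$, since the corollary asserts equivalence with the same implicit constants for every $j \geq 1$. This uniformity comes from two places: the already-uniform constant in Lemma \ref{Prop:UchiNonLebesgue}, and the fact that on $\mc I_j^{**}$ we have $x \simeq 1$, so both the measure and the metric equivalence $d_\mu \simeq |{\cdot}|$ carry absolute constants independent of $j$. With these two facts in hand, Theorem \ref{th:Uchiyama} yields $\|f\|_{H^1_{\mathrm{at}}(\mc I_j^{**})} \simeq \|f^{(+)}\|_{L^1(\mc I_j^{**},\mu)}$ with implicit constants depending only on $A$ and $\gamma_1, \gamma_2, \gamma_3$, and then \eqref{eq:identity1} rewrites $f^{(+)}$ as $\sup_{0 < t < \mu(\mc I_j^{**})} |\mc P_t f|$, completing the proof.
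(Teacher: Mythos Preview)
Your proposal is correct and follows precisely the paper's own approach: the paper's proof is the single sentence ``The result follows from \eqref{eq:identity1} and Lemma \ref{Prop:UchiNonLebesgue} by applying Theorem \ref{th:Uchiyama},'' and you have simply unpacked this by explaining why $d_\mu \simeq |\cdot|$ on $\mc I_j^{**}$ (since $x \simeq 1$ there for $j \ge 1$) and by emphasizing the uniformity in $j$ of the constant $A$. Your added detail is welcome and accurate; there is nothing to correct.
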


    \begin{proof}
    The result follows from \eqref{eq:identity1} and Lemma \ref{Prop:UchiNonLebesgue} by applying Theorem \ref{th:Uchiyama}.
    \end{proof}

Our next goal is to obtain Corollary \ref{th:UchiNonLeb} also for $j=0$. This will follow from Theorem~\ref{theorem} and the characterization of the local Hardy space $h^1_{\mathbf L}$ related to the Bessel operator $\mathbf L$ that we state in Proposition \ref{ppprrr} below. It is worth to mention that the space $h^1_{\mathbf L}$ was described by means of the local Riesz transforms in \cite[Theorem 2.11]{Preisner}. The characterization with maximal functions is another consequence of \cite{Uchiyama} for which the key estimates were obtained in \cite{Betancor-Dziubanski-Torrea}. However, for the sake of completeness, we provide a sketch of the proof. Recall that we define the atomic Hardy space $H^1_{\mathrm{at}}(\mathcal{I}_0^{**})$ on the space of homogeneous type $(\mathcal{I}_{0}^{**},d_\mu,\mu)$ as in Section \ref{sec2}.
    \begin{prop}\label{ppprrr}
    For a function $f\in L^1{(\mathcal{I}_0^{**}, \mu)}$ the following holds
    $$
    \|f\|_{H^1_{\mathrm{at}}(\mathcal{I}_0^{**})}\simeq \Big\|\sup_{0<t<1}|\mathbf P_tf(x)|\Big\|_{L^1{(\mathcal{I}_0^{**},\, \mu)}}.
    $$
    \end{prop}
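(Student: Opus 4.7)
My plan is to deduce the equivalence from Uchiyama's Theorem \ref{th:Uchiyama} applied to the space of homogeneous type $X := (\mathcal{I}_0^{**}, d_\mu, \mu)$. By the very definition of $d_\mu$ one has $\mu(B_{d_\mu}(x, r)) \simeq r$ uniformly in $x$ and $r$, and $\sigma(X) := \mu(\mathcal{I}_0^{**}) \simeq 1$, so the measure-theoretic hypothesis of Theorem \ref{th:Uchiyama} is automatically satisfied.

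The first step is to construct an admissible kernel $K(r, x, y)$, $0 < r < \sigma(X)$, whose Uchiyama maximal function \eqref{eq:maximalUchi} agrees up to constants with $\sup_{0 < t < 1}|\mathbf{P}_t f(x)|$ on $\mathcal{I}_0^{**}$. I set $K(r, x, y) := \mathbf{P}_{\psi(r, x)}(x, y)$, where, for each fixed $x \in \mathcal{I}_0^{**}$, the map $r \mapsto \psi(r, x)$ is the strictly increasing bijection determined implicitly by $\psi(r, x) \cdot \mu(B^{\mathrm{Euc}}(x, \psi(r, x))) \simeq r$. A short computation using $d\mu(z) = z^{2\nu+1}\,dz$ shows that the image of this bijection is an interval with endpoints $\simeq 1$ uniformly in $x$, which combined with the elementary control of $\mathbf{P}_t$ on a bounded range of $t$ (via the heat kernel estimate in Lemma \ref{Lem:Gaussian} and subordination) yields
$$\sup_{0 < r < \sigma(X)} |K_r f(x)| \simeq \sup_{0 < t < 1} |\mathbf{P}_t f(x)|, \qquad x \in \mathcal{I}_0^{**}.$$

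The second step is to verify \eqref{UchiRep1}--\eqref{UchiRep3} for $K$. This reduces to the sharp estimates on $\mathbf{P}_t(x, y)$ and $\partial_y \mathbf{P}_t(x, y)$ established in \cite{Betancor-Dziubanski-Torrea}, namely
$$\mathbf{P}_t(x, x) \simeq \bigl(t\,\mu(B^{\mathrm{Euc}}(x, t))\bigr)^{-1}, \quad \mathbf{P}_t(x, y) \leq \frac{C\,t}{(t + |x-y|)\, \mu(B^{\mathrm{Euc}}(x, t+|x-y|))},$$
and
$$\Big|\frac{\partial}{\partial y}\mathbf{P}_t(x,y)\Big| \leq \frac{C\,t}{(t + |x-y|)^2\, \mu(B^{\mathrm{Euc}}(x, t+|x-y|))}.$$
Combined with the comparability $d_\mu(x,y) \simeq \mu(B^{\mathrm{Euc}}(x, |x-y|))$, which holds across both regimes $|x-y| \ll x$ and $|x-y| \gtrsim x$, and with the defining relation for $\psi$, these bounds translate respectively into \eqref{UchiRep1}, into \eqref{UchiRep2} with $\gamma_1 = 1$, and (after applying the mean-value theorem and converting the Euclidean increment $|y - z|$ into $d_\mu(y, z)$ via the density $z^{2\nu+1}$) into \eqref{UchiRep3} with $\gamma_2 = \gamma_3 = 1$.

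The principal obstacle is controlling the Bessel Poisson kernel uniformly as $x$ approaches the endpoint $0$, since $z^{2\nu+1}$ degenerates there and $\mu(B^{\mathrm{Euc}}(x, t))$ switches regime between $t \ll x$ and $t \gtrsim x$; but this is precisely the point of the calculations carried out in \cite{Betancor-Dziubanski-Torrea}, so I would invoke those estimates as a black box rather than reprove them. Once \eqref{UchiRep1}--\eqref{UchiRep3} are in hand, the conclusion of the proposition follows at once from Theorem \ref{th:Uchiyama}.
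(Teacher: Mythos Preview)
Your overall strategy coincides with the paper's: both apply Uchiyama's Theorem~\ref{th:Uchiyama} to the space $(\mathcal{I}_0^{**}, d_\mu, \mu)$ with a kernel of the form $K(r,x,y)=\mathbf{P}_{\psi(r,x)}(x,y)$ for a suitable reparametrization $\psi$, and both defer the actual verification of \eqref{UchiRep1}--\eqref{UchiRep3} to the estimates of \cite{Betancor-Dziubanski-Torrea}. The paper also handles explicitly the small mismatch between the ranges $0<r<\mu(\mathcal{I}_0^{**})$ and $0<t<1$, which you sketch adequately.

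There is, however, a concrete error in your reparametrization. You assert $\mathbf{P}_t(x,x)\simeq\bigl(t\,\mu(B^{\mathrm{Euc}}(x,t))\bigr)^{-1}$ and accordingly define $\psi$ by $\psi\cdot\mu(B^{\mathrm{Euc}}(x,\psi))\simeq r$. But the correct on-diagonal behavior is $\mathbf{P}_t(x,x)\simeq\mu(B^{\mathrm{Euc}}(x,t))^{-1}$ (already visible in the classical case $\nu=-1/2$, where $\mathbf{P}_t(x,x)\simeq t^{-1}$, and consistent with your own upper bound evaluated at $x=y$). With your $\psi$ one gets $K(r,x,x)\simeq\mu(B(x,\psi))^{-1}\simeq\psi/r$, and for small $r$ (regime $\psi\lesssim x$) this is $\simeq r^{-1/2}x^{-(2\nu+1)/2}$, which fails the lower bound \eqref{UchiRep1}. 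The fix is to take $\psi$ so that $\mu(B^{\mathrm{Euc}}(x,\psi))\simeq r$; this is precisely the paper's explicit choice
\[
t(x,r)=\begin{cases} r\,x^{-2\nu-1}, & r\le x^{2\nu+2},\\ r^{1/(2\nu+2)}, & r>x^{2\nu+2},\end{cases}
\]
for which \cite[Proposition~2.12]{Betancor-Dziubanski-Torrea} verifies \eqref{UchiRep1}--\eqref{UchiRep3}. Once you replace your $\psi$ by this $t(x,r)$, the rest of your argument goes through and matches the paper's proof.
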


    \begin{proof}

    The proof is based on Theorem \ref{th:Uchiyama} and estimates obtained in \cite[Theorem 2.7]{Betancor-Dziubanski-Torrea}. We consider the space of homogeneous type $(\mc I_0^{**}, d_\mu, \mu)$. Observe that $\mu(\mc I_0^{**}) \simeq 1$. For $x,y\in \mc I_0^{**}$ and $r<\mu(\mc I_0^{**})$, we define (see \cite{Betancor-Dziubanski-Torrea})
        \begin{equation*}
        t(x,r) =  \begin{cases}
        rx^{-2\nu-1}, \qquad \quad r\leq x^{2\nu+2},\\
        r^{\frac{1}{2\nu+2}}, \qquad \qquad \, r > x^{2\nu+2}.
        \end{cases}
        \end{equation*}
    Set
        $$K(r,x,y)=\mathbf P_{t(x,r)}(x,y).$$
    The kernel $K(r,x,y)$ satisfies the assumptions of Theorem \ref{th:Uchiyama}, namely \eqref{UchiRep1}, \eqref{UchiRep2}, \eqref{UchiRep3}. The proof of this can be found in \cite[Proposition 2.12]{Betancor-Dziubanski-Torrea}. Thus, by defining $K_rf(x)$ as in \eqref{eq:maximalUchi}, from Theorem \ref{th:Uchiyama} we deduce that
        \begin{equation}\label{dsd}
        \|f\|_{H^1_{\mathrm{at}}(\mathcal{I}_0^{**})}\simeq \Big\|\sup_{0<r<\mu (\mc I^{**}_0)}| K_r f(x)|\Big\|_{L^1{(\mathcal{I}_0^{**},\, \mu)}}.
        \end{equation}
    Since $|K(r,x,y)|\leq C/r$ we have that
        \begin{equation}\label{sds}
        \sup_{\mu(\mc I^{**}_0)<r<1} \Big| \int_0^1 K(r,x,y) \, d\mu(x)\Big|\leq C.
        \end{equation}
Notice that $0<t<1$ if and only if $0<r<1$. From this, \eqref{dsd} and \eqref{sds}, for a function $f\in H^1_{\mathrm{at}}(\mathcal{I}_0^{**})$,
        \begin{equation*}
        \begin{split}
        \Big\|\sup_{0<t<1}| \mathbf P_t f(x)|\Big\|_{L^1{(\mathcal{I}_0^{**},\, \mu)}}  &\leq \Big\|\sup_{0<r<\mu(\mc I_0^{**})}| K_r f(x)|\Big\|_{L^1{(\mathcal{I}_0^{**},\, \mu)}} +\Big\|\sup_{\mu(\mc I^{**}_0)<r<1}| K_r f(x)|\Big\|_{L^1{(\mathcal{I}_0^{**},\, \mu)}} \\
        &\leq C \|f\|_{H^1_{\mathrm{at}}(\mathcal{I}_0^{**})} + C \|f\|_{L^1(\mathcal{I}_0^{**},\, \mu)}\leq C \|f\|_{H^1_{\mathrm{at}}(\mathcal{I}_0^{**})}.
        \end{split}
        \end{equation*}
    Finally, for a function $f$ such that $\Big\|\sup_{0<t<1}| \mathbf P_t f(x)|\Big\|_{L^1{(\mathcal{I}_0^{**},\, \mu)}}$ is finite, we get
        \begin{equation*}
        \|f\|_{H^1_{\mathrm{at}}(\mathcal{I}_0^{**})} \leq C \Big\|\sup_{0<r<\mu(I^{**}_0)}| K_r f(x)|\Big\|_{L^1{(\mathcal{I}_0^{**},\, \mu)}} \leq C \Big\|\sup_{0<t<1}| \mathbf P_t f(x)|\Big\|_{L^1{(\mathcal{I}_0^{**},\, \mu)}}
        \end{equation*}
    \end{proof}

Directly from Theorem \ref{theorem} and Proposition \ref{ppprrr} we obtain the following result.

    \begin{cor}\label{pprr} Let $H^1_{\mathrm{at}}(\mathcal{I}_0^{**})$ be the atomic Hardy space defined as in Section \ref{sec2} for the space of homogeneous type $(\mathcal{I}_{0}^{**},d_\mu,\mu)$. Assume that $f\in L^1{(\mc I_0^{**},\mu)}$. Then
    $$
    \|f\|_{H^1_{\mathrm{at}}(\mathcal{I}_0^{**})}\simeq \big\|\mathcal{M}^0f(x)\big\|_{L^1(\mathcal{I}_0^{**}, \mu)}.
    $$
    \end{cor}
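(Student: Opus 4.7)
My plan is to deduce the corollary as a short consequence of Theorem \ref{theorem} and Proposition \ref{ppprrr} by means of the triangle inequality, with a small amount of care needed to absorb the error term $\|f\|_{L^1(\mathcal{I}_0^{**},\mu)}$ appearing in Theorem \ref{theorem}. Pointwise on $\mathcal{I}_0^{**}$ one has
$$
\Big|\sup_{0<t<1}|\mathcal P_t f(x)|-\sup_{0<t<1}|\PPp_t f(x)|\Big|\le \sup_{0<t<1}|\mathcal P_t f(x)-\PPp_t f(x)|,
$$
so integrating against $d\mu$ and invoking Theorem \ref{theorem} yields
$$
\Big|\|\mc M^0 f\|_{L^1(\mathcal{I}_0^{**},\mu)}-\big\|\sup_{0<t<1}|\PPp_t f|\big\|_{L^1(\mathcal{I}_0^{**},\mu)}\Big|\le C\|f\|_{L^1(\mathcal{I}_0^{**},\mu)}.
$$
Combined with Proposition \ref{ppprrr}, this already gives the desired two-sided bound up to an additive $\|f\|_{L^1(\mathcal{I}_0^{**},\mu)}$ term on each side, so the only remaining task is to absorb that term into the appropriate norm in each direction.

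For the upper bound $\|\mc M^0 f\|_{L^1(\mathcal{I}_0^{**},\mu)}\le C\|f\|_{H^1_{\mathrm{at}}(\mathcal{I}_0^{**})}$, I would use the standard fact that atoms for $H^1_{\mathrm{at}}(\mathcal{I}_0^{**})$ have $L^1(\mu)$-norm at most $1$, so any atomic decomposition $f=\sum_i\lambda_i a_i$ gives $\|f\|_{L^1(\mathcal{I}_0^{**},\mu)}\le \sum_i|\lambda_i|$, and hence $\|f\|_{L^1(\mathcal{I}_0^{**},\mu)}\le \|f\|_{H^1_{\mathrm{at}}(\mathcal{I}_0^{**})}$. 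For the reverse direction, I would use that $|f(x)|\le \mc M^0 f(x)$ for a.e.\ $x$, which follows from the pointwise a.e.\ convergence $\mathcal P_t f\to f$ as $t\to 0^+$ (itself a consequence of the weak-type $(1,1)$ bound for $\mc M$ recalled in Section \ref{sec1}); consequently $\|f\|_{L^1(\mathcal{I}_0^{**},\mu)}\le \|\mc M^0 f\|_{L^1(\mathcal{I}_0^{**},\mu)}$, and this is enough to absorb the error.

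There is no serious obstacle, since both ingredients are already in place and the triangle inequality carries the main burden. The only subtlety is the routine trick of absorbing the $L^1$ remainder: via the $H^1_{\mathrm{at}}\hookrightarrow L^1$ embedding on one side, and via the a.e.\ domination $|f|\le\mc M^0 f$ on the other.
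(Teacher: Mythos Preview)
Your argument is correct and is exactly the kind of short deduction the paper has in mind when it says the corollary follows ``directly from Theorem \ref{theorem} and Proposition \ref{ppprrr}.'' The absorption of the $\|f\|_{L^1(\mathcal I_0^{**},\mu)}$ remainder via $\|f\|_{L^1}\le\|f\|_{H^1_{\mathrm{at}}}$ on one side and $|f|\le\mathcal M^0 f$ a.e.\ on the other is precisely what is needed, and mirrors the absorption step already carried out in the proof of Proposition \ref{ppprrr}.
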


\noindent\textbf{Partition of unitity.} For every $j\ge 0$, let $\eta_j\in C^\infty(0,1)$ be such that
    \begin{equation}\label{partition}
    \supp\, \eta_j\subseteq \mathcal{I}_j^{*}, \quad 0\leq\eta_j\leq 1,  \quad \Big|\frac{d}{dx}\eta_j(x)\Big|\leq C2^j,  \quad  \text{and}\quad \sum_{j=0}^{\infty}\eta_j(x)=1.
    \end{equation}

    The following lemma takes ideas from \cite{DDZZ}.
    \begin{lem}\label{lem:commutators}
    For $x,y\in (0,1)$ define
    $$
    \mathcal{V}(x,y)=\sum_{j=0}^{\infty}\sup_{0<t<\mu(\mc I^{**}_j)}|(\eta_j(x)-\eta_j(y))\mathcal{P}_t(x,y)|.
    $$
    Then, there exists a constant $C$ such that, for every $y>0$,
    \begin{equation}\label{dcv}
    \int_0^1\mathcal{V}(x,y)\,d\mu(x)\le C.
    \end{equation}
    \end{lem}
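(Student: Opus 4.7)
Fix $y\in(0,1)$, let $k$ denote the index with $y\in\mathcal{I}_k$, and write $T_j:=\mu(\mathcal{I}_j^{**})$, so that $T_j\simeq 2^{-j}$ for $j\ge 1$ and $T_0\simeq 1$. The starting observation is that, since $\supp\eta_j\subseteq\mathcal{I}_j^*$ and $\zeta=1/50$ is small, $\eta_j(y)\ne 0$ forces $|j-k|\le 1$. I split the sum over $j$ into the ``near'' set $\{k-1,k,k+1\}$ (at most three terms) and the ``far'' set $|j-k|\ge 2$.

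For the far part, $\eta_j(y)=0$, so $|\eta_j(x)-\eta_j(y)|=\eta_j(x)\le\chi_{\mathcal{I}_j^*}(x)$, and the contribution is bounded by
\[
\sum_{|j-k|\ge 2}\int_{\mathcal{I}_j^*}\sup_{0<t<T_j}\mathcal{P}_t(x,y)\,d\mu(x).
\]
For $x\in\mathcal{I}_j^*$ one has $|x-y|\gtrsim\max(2^{-j},2^{-k})$, and Lemma~\ref{Lem:SharpNatural} reveals that the middle factor $(1-x)(1-y)/(t^2+(1-x)^2+(1-y)^2)$ is of order $2^{-|j-k|}$, while $\sup_{t<T_j}t/(t^2+|x-y|^2)\le T_j/|x-y|^2$. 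Combined with $\mu(\mathcal{I}_j^*)\simeq 2^{-j}$ (or $\simeq 1$ when $j=0$), this produces a geometric series in $|j-k|$ that is summable uniformly in $y$; the endpoint subcases $j=0$ or $k=0$ are checked separately and are harmless since $x$ and $y$ are then separated by order~$1$.

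For the near part (only $O(1)$ indices), I split the integration over $x$ into $\mathcal{I}_j^*$ and its complement. On the complement, $\eta_j(x)=0$ and $|\eta_j(x)-\eta_j(y)|\le 1$; the geometric separation $|x-y|\gtrsim 2^{-j}$ combined with $\sup_{t<T_j}t/(t^2+|x-y|^2)\le T_j/|x-y|^2$ gives a bounded contribution. On $\mathcal{I}_j^*$, I apply the mean value theorem $|\eta_j(x)-\eta_j(y)|\le C\,2^j|x-y|$; the factor $|x-y|$ cancels the near-diagonal singularity via $\sup_{t<T_j}t/(t^2+|x-y|^2)\le 1/(2|x-y|)$, yielding pointwise
\[
|\eta_j(x)-\eta_j(y)|\sup_{0<t<T_j}\mathcal{P}_t(x,y)\le C\,2^j(x^2+y^2)^{-\nu-1/2}\,\frac{(1-x)(1-y)}{(1-x)^2+(1-y)^2}.
\]
For $j\ge 1$ the first two factors are of order $1$ on $\mathcal{I}_j^*$ and $2^j\mu(\mathcal{I}_j^*)\simeq 1$, so the integral is $O(1)$.

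I expect the main obstacle to be the case $k=0$ with $y$ close to $0$, where $(x^2+y^2+t^2)^{-\nu-1/2}$ becomes large. The saving grace is the Bessel weight $d\mu(x)=x^{2\nu+1}dx$: after the substitution $x=yu$, the bound $\int_0^c(x^2+y^2)^{-\nu-1/2}x^{2\nu+1}\,dx$ reduces to $y\int_0^{c/y}u^{2\nu+1}(u^2+1)^{-\nu-1/2}\,du$, whose integrand is $\simeq 1$ at infinity for $\nu>-1/2$, making the quantity uniformly bounded in $y$ and completing the argument.
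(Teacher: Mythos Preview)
Your overall strategy matches the paper's: fix $y\in\mathcal I_k$, split the $j$--sum into far ($|j-k|\ge 2$) and near ($|j-k|\le 1$) indices, use $\eta_j(y)=0$ for the far part to get a geometric series, and use the mean value theorem for the near part. Your treatment of the delicate case $k=0$, $y\to 0$ via the substitution $x=yu$ is correct and in fact more explicit than the paper's argument, which instead implicitly relies on the fact that $\eta_0(x)-\eta_0(y)\ne 0$ forces $\max(x,y)\gtrsim 1$.

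There is, however, a genuine gap in your near part on the complement of $\mathcal I_j^*$. You claim that for $x\notin\mathcal I_j^*$ one has $|x-y|\gtrsim 2^{-j}$. This is true for $j=k$ (since $y\in\mathcal I_k\subset\mathcal I_k^*$ is at distance $\gtrsim\zeta\,2^{-k}$ from $\partial\mathcal I_k^*$), but it fails for $j=k\pm 1$. Indeed, $\eta_{k-1}(y)\ne 0$ forces $y\in\mathcal I_k\cap\mathcal I_{k-1}^*$, and such $y$ can be arbitrarily close to the right endpoint $r_{k-1}$ of $\mathcal I_{k-1}^*$; then $x$ just to the right of $r_{k-1}$ satisfies $|x-y|\to 0$. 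With your crude bound $|\eta_{k-1}(x)-\eta_{k-1}(y)|\le 1$ and $\sup_{t}\mathcal P_t(x,y)\lesssim |x-y|^{-1}$, the integral over $(r_{k-1},1)$ picks up a factor $\log\bigl(T_{k-1}/(r_{k-1}-y)\bigr)$, which is not uniformly bounded in $y$.

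The fix is exactly what the paper does in its Subcase~1.2: replace the per-$j$ split by a single split of $x$ into the \emph{union} $\mathcal I_{k-1}^*\cup\mathcal I_k^*\cup\mathcal I_{k+1}^*$ and its complement. On the union one applies the mean value bound $|\eta_j(x)-\eta_j(y)|\le C\,2^{j}|x-y|$ for all three near $j$'s simultaneously; on the complement of the union one genuinely has $|x-y|\gtrsim 2^{-k}$ (since the union contains a full neighborhood of $\mathcal I_k$ of width $\simeq 2^{-k}$), and the paper then invokes the proof of Lemma~\ref{prop:M0Natural}. Alternatively, you can keep your per-$j$ split but extend the mean value argument to $x$ in a slightly larger set containing the overlap region.
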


    \begin{proof}
    Recall that $\mu(\mc I^{**}_j) \simeq 2^{-j}$. Fix $y\in(0,1)$. Let $j_0$ be such that $y\in \mathcal{I}_{j_0}$. Throughout the proof of the lemma, we will use Lemma \ref{Lem:SharpNatural} repeatedly without further mention.

 \noindent\textbf{Case 1. $j_0\neq0$}. In this case $y>1/2$.

    \noindent\textbf{Subcase 1.1.} We first assume that $\mathcal{I}_j$ and $\mathcal{I}_{j_0}$ are separated enough, that is, $|j-j_0|\ge 2$. Then, $|\eta_j(x)-\eta_j(y)|\mathcal{P}_t(x,y)=\eta_j(x)\mathcal{P}_t(x,y)$.
    Notice that if $\eta_j(x)\neq 0$, then $|x-y|\simeq 2^{-j_0}+2^{-j}$. Thus, for $t<\mu(\mc I^{**}_j)$, we obtain
        $$        \eta_j(x)\mathcal{P}_t(x,y)\le \eta_j(x)\left(\frac{2^{-j}2^{-j_0}}{t^2+2^{-2j}+2^{-2j_0}}\right)\left(\frac{t}{t^2+2^{-2j_0}+2^{-2j}}\right)\le \eta_j(x)\frac{2^{-2j}2^{-j_0}}{2^{-4j}+2^{-4j_0}},       $$
    hence
        \begin{align*}
        \int_0^1\sum_{|j-j_0|\ge2}\sup_{0<t<\mu(\mc I_j^{**})}\eta_j(x)\mathcal{P}_t(x,y)\,d\mu(x)&\le \sum_{|j-j_0|\ge2}\int_0^1\eta_j(x)\frac{2^{-2j}2^{-j_0}}{2^{-4j}+2^{-4j_0}}\,d\mu(x)\\
        &\le \sum_{ j=0}^{\infty}\frac{2^{-3j}2^{-j_0}}{2^{-4j}+2^{-4j_0}}\le C.
        \end{align*}

    \noindent\textbf{Subcase 1.2.}
    Assume that $|j-j_0|\leq 1$. If $x\not \in \mathcal{I}^*_{j_0-1}\cup\mathcal{I}^*_{j_0}\cup\mathcal{I}^*_{j_0+1}$, then $\eta_j(x)=0$. Hence
    \begin{equation*}
    \int_{(\mathcal{I}^*_{j_0-1}\cup\mathcal{I}^*_{j_0}\cup\mathcal{I}^*_{j_0+1})^c}\sum_{|j-j_0|\leq 1}\sup_{0<t<\mu(\mc I^{**}_j)}|(\eta_j(x)-\eta_j(y))\mathcal{P}_t(x,y)| \leq C
    \end{equation*}
    by the proof of Lemma \ref{prop:M0Natural}.

    When $x\in \mathcal{I}^*_{j_0-1}\cup\mathcal{I}^*_{j_0}\cup\mathcal{I}^*_{j_0+1}$, we have $(1-x)\simeq (1-y)$ and $|x-y|\leq C 2^{-j}$. Then, for $0<t<\mu(\mc I^{**}_j)$, by the mean-value theorem we get
        \begin{align*}
        |\eta_j(x)-\eta_j(y)|\mathcal{P}_t(x,y)&\le C2^j|x-y|\mathcal{P}_t(x,y)\chi_{[\mathcal{I}_{j_0-1}^*\cup\mathcal{I}_{j_0}^*\cup\mathcal{I}_{j_0+1}^*]}(x)\\
        &\le C2^j\frac{t|x-y|}{t^2+(x-y)^2}\chi_{[\mathcal{I}_{j_0-1}^*\cup\mathcal{I}_{j_0}^*\cup\mathcal{I}_{j_0+1}^*]}(x)\\
        &\le C2^j\chi_{[\mathcal{I}_{j_0-1}^*\cup\mathcal{I}_{j_0}^*\cup\mathcal{I}_{j_0+1}^*]}(x).
        \end{align*}
    Thus
    $$\int_0^1\sum_{|j-j_0|\le1}\sup_{0<t<\mu(\mc I^{**}_j)}|\eta_j(x)-\eta_j(y)|\mathcal{P}_t(x,y)\,d\mu(x)\le C+C \sum_{|j-j_0|\le1}2^{-j_0}2^j=  C.    $$

   \noindent \textbf{Case 2. $j_0=0$}. Assume that $j\ge2$. Then $\eta_j(y)=0$ and, when $\eta_j(x)\not =0$, then $|x-y|>c$ and $x\simeq 1$. Hence
        \begin{equation}\label{gfgf1}
        \sup_{0<t<\mu(\mc I^{**}_j)}|\eta_j(x)-\eta_j(y)|\mathcal{P}_t(x,y)\leq C 2^{-j}\eta_j(x).
        \end{equation}
For $j=0,1$, we have
    \begin{equation}\label{gfgf2}
    \sup_{0<t<1}|\eta_j(x)-\eta_j(y)|\mathcal{P}_t(x,y)\le C \sup_{0<t<1}\frac{t|x-y|}{t^2+(x-y)^2}\le C.
    \end{equation}
Combining \eqref{gfgf1} and \eqref{gfgf2} we get \eqref{dcv}, when $y \in \mc I_0$.
        \end{proof}

%%%%%%%%%%%%%%%%%%
\subsection{Proof of Theorem \ref{thmA}}
%%%%%%%%%%%%%%%%%%

We begin with the proof of the first inequality in \eqref{th:A}. Let $a$ be an $\mathcal{L}$-atom, see Subsection \ref{subsection L redondo}. We prove that $\|\mathcal{M}a\|_{L^1{((0,1),\,\mu)}}\leq~C$, where $C$ is independent of $a$. Consider three cases:

\noindent{\bf Case 1:} Assume first that $\supp \, a\subseteq \mathcal{I}_0^{*}$. Then, either $a$ satisfies the cancellation condition and, in particular, it is an $H^1(\mc I_0^{**})$-atom (see Section \ref{sec2}), or $a=\mu(\mc  I_0)^{-1} \chi_{\mc I_0}(x)$. In the latter case, $a=\lambda_1 a_1+a_2$, where $a_1,a_2$ are $H^1(\mc I_0^{**})$-atoms and $|\lambda_1|\leq C$. Indeed,
    \begin{equation}\label{two_atoms}
    a=\mu(\mc  I_0)^{-1} \chi_{\mc I_0}(x)= \(\mu(\mc  I_0)^{-1} \chi_{\mc I_0}(x) -\mu(\mc I^{**}_0)^{-1} \chi_{\mc I^{**}_0}(x)\) + \mu(\mc I^{**}_0)^{-1} \chi_{\mc I^{**}_0}(x)=:\lambda_1 a_1+a_2.
    \end{equation}
Observe that
    \begin{equation*}
    \begin{split}
    \int_0^1\mathcal{M}a(x)\,d\mu(x)&\le \int_0^1\mathcal{M}^{\infty}a(x)\,d\mu(x)+\int_0^1 \chi_{(\mathcal{I}_0^{**})^c}(x)\mathcal{M}^0a(x)\,d\mu(x)\\
    &\quad+\int_{\mathcal{I}_0^{**}}  \mathcal{M}^0 a(x)\,d\mu(x).
    \end{split}
    \end{equation*}
By applying Lemmas \ref{prop:MinftyNatural} and \ref{prop:M0Natural}, and Corollary \ref{pprr}, we obtain that
these quantities are bounded by constants independent of $a$.

\noindent{\bf Case 2:} Suppose now that $\supp \, a\subseteq \mathcal{I}_j^{*}$ for $j\geq 1$. Exactly as in the previous case $a$ is an $H^1(\mc I_j^{**})$-atom or $a =\lambda_1 a_1+a_2$, where $a_1,a_2$ are $H^1(\mc I_j^{**})$-atoms and $|\lambda_1|\leq C$. Then
    \begin{align*}
    \int_0^1\mathcal{M}a(x)\,d\mu(x)&\le \int_0^1\mathcal{M}^{\infty}a(x)\,d\mu(x)+\int_0^1 \chi_{(\mathcal{I}_j^{**})^c}(x)\mathcal{M}^0a(x)\,d\mu(x)\\
    &\quad+\int_{\mathcal{I}_j^{**}} \sup_{\mu(\mc I^{**}_j)<t<1} |\mathcal P_t a(x)|\,d\mu(x)+\int_{\mathcal{I}_j^{**}} \sup_{0<t<\mu(\mc I^{**}_j)} |\mathcal P_t a(x)|\,d\mu(x).
    \end{align*}
By using Lemmas \ref{prop:MinftyNatural}, \ref{prop:Istarstar}, \ref{prop:M0Natural}, and Corollary \ref{th:UchiNonLeb}, the right-hand side is bounded by a constant independent of $a$ and $j$.

    \begin{rem}\label{ree1}
    If a function $b$ is such that $\supp \, b \subseteq \mc I_j^{*}$ and $\|b\|_\8 \leq C \mu (\mc I_j^{*})^{-1}$ for some $j\geq 0$, then
        $$\|\mc M b\|_{L^1((0,1),\,\mu)} \leq C.$$
    Indeed, set $b=b_1+b_2$, where
        $$b_1(x) = b(x) -  \frac{\int b \, d\mu}{\mu(\mc I_j)} \chi_{\mc I_j}(x), \qquad b_2(x) = \frac{\int b \, d\mu}{\mu(\mc I_j)} \chi_{\mc I_j}(x). $$
        Now, the claim follows from Case 1 ($j=0$) or Case 2 ($j\geq 1$) since $C^{-1} b_1$ and $C^{-1} b_2$ are $\mc L$-atoms with support contained in $\mc I_j^{*}$ and $C$ is some universal constant.
    \end{rem}
    Now we complete the remaining case.

\noindent{\bf Case 3:} Assume now that there is no $j$ such that $\supp\, a\subseteq \mathcal{I}_j^{*}$. Fix an interval $I$ such that $\supp\, a\subseteq I$ and $\|a\|_{\infty}\le\mu(I)^{-1}$ and fix the largest $N\in \NN\cup\{0\}$ and the smallest $M\in\NN \cup\{\infty\}$  such that $I\subseteq \bigcup_{j=N}^M \mathcal{I}_j^{*}$. Note that from the assumptions we have $N<M$ and $\mu(I)\simeq 2^{-N}$. Set
    $$    a=\sum_{j=N}^M2^{N-j}b_j ,   $$
where $b_j=a\cdot 2^{j-N}\chi_{\mathcal{I}_j}$.  Since $\|b_j\|_\8 \leq C 2^j\simeq C \mu(\mc I_j^*)$ and $\supp \, b_j \subseteq \mc I_j^*$, from Remark \ref{ree1} we deduce that $\|\mathcal{M}b_j\|_{L^1((0,1), \,\mu)}\le C$ and, consequently, $\|\mathcal{M}a\|_{L^1((0,1), \,\mu)}\le C$.

These three cases finish the first inequality in \eqref{th:A}. Let us now turn to proof of the converse, namely, the second inequality in \eqref{th:A}. Let $f\in H_{\mathcal{L}}^1$. We show that $f$ admits a suitable atomic decomposition. Recall the partition of unity $\eta_j$ in \eqref{partition}. We have that $f=\sum_{j=0}^{\infty}\eta_jf$ with $\supp\,\eta_j\subseteq \mathcal{I}_j^{*}$.
Observe that, by Lemma \ref{lem:commutators},
    \begin{equation}\label{eq:sum}
    \begin{split}
    \sum_{j=0}^{\infty}\Big\|\sup_{0<t<\mu(\mc I_j^{**})}|\mathcal{P}_t(\eta_jf)|\Big\|_{L^1(\mathcal{I}_j^{**},\mu)}&\leq
    \sum_{j=0}^{\infty}\Big\|\sup_{0<t<\mu(\mc I_j^{**})}|\mathcal{P}_t(\eta_jf)-\eta_j\mathcal{P}_t(f)|\Big\|_{L^1(\mathcal{I}_j^{**},\mu)}\\
    &\quad+\sum_{j=0}^{\infty}\Big\|\eta_j\sup_{0<t<\infty}|\mathcal{P}_t(f)|\Big\|_{L^1((0,1), \,\mu)}\\
    &\leq C \|f\|_{L^1((0,1), \,\mu)}+\|\mc M f \|_{L^1((0,1), \,\mu)}\le C\|f\|_{H^1_{\mathcal{L}}},
    \end{split}
    \end{equation}
    where $C$ is independent of $f$.
By Corollaries \ref{th:UchiNonLeb} and \ref{pprr}, for each $j\ge0$, $\eta_jf$ has an atomic decomposition into $H^1(\mc I_j^{**})$-atoms, so that
    $$    f(x)=\sum_{j=0}^{\infty}\eta_j(x)f(x)=\sum_{j=0}^{\infty}\sum_{k=1}^{\infty}\lambda_{j,k}a_{j,k}(x),$$
where $a_{j,k}$ are $H^1_{\mathrm{at}}(\mathcal{I}_j^{**})$-atoms, and
    $$\sum_{j=0}^{\infty}\sum_{k=1}^{\infty}|\lambda_{j,k}|\le C \sum_{j=0}^{\infty}\Big\|\sup_{0<t<\mu(\mc I_j^{**})}|\mathcal{P}_t(\eta_jf)|\Big\|_{L^1(\mathcal{I}_j^{**},\mu)}\le C \|f\|_{H^1_{\mathcal{L}}}.$$
Finally, for every $j,k$, either the function $C^{-1} a_{j,k}$ is an $\mathcal{L}$-atom, or $a_{j,k} = \mu(\mc I_j^{**})^{-1}\chi_{\mc I_j^{**}}$. In the latter case, $C^{-1} a_{j,k} $ is the sum of two $\mc L$-atoms, see \eqref{two_atoms} for a decomposition of $a_{j,k}$.

%%%%  Section 5
\section{Proof of Theorem \ref{thmB}}\label{sec5}

In the first part of this section similar results to those contained in Section~\ref{sec4} are stated. Observe that if $j\geq 1$ and $x\in  \mc J_j=\mc I_j $, then $x\simeq 1$ and the measure $\mu$ behaves like the Lebesgue measure. Therefore, when $j\geq 1$, the analysis of $P_t$ on $\mc J_j$ is almost identical to that of $\mc P_t$ on $\mc I_j$, see \eqref{Poisson kernel}. On $\mc J_j$ for $j\le-1$ we proceed similarly to the case $\mc J_{-j}$. Since most of the arguments are parallel to those of Section \ref{sec4}, we present only sketches of the proofs.

The first lemma follows directly from Lemma \ref{Lem:SharpLebesgue}.
    \begin{lem}\label{prop:MinftyLebesgue}
    The operator $M^\infty g(x):=\sup_{t>1}|P_tg(x)|$ is bounded from $L^1(0,1)$ into itself.
    \end{lem}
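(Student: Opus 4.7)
The plan is to obtain the estimate directly from the large-time pointwise bound on the Poisson kernel. By Lemma \ref{Lem:SharpLebesgue}, for $t>1$ and $x,y\in(0,1)$ one has
$$P_t(x,y) \le C \,(xy)^{\nu+1/2}(1-x)(1-y)\, e^{-t\lambda_{1,\nu}}\le C\,(xy)^{\nu+1/2}(1-x)(1-y)\, e^{-\lambda_{1,\nu}},$$
so the kernel already factorizes as a function of $x$ times a function of $y$, uniformly in $t>1$. This gives at once
$$\sup_{t>1}|P_t g(x)| \le C\, x^{\nu+1/2}(1-x)\int_0^1 y^{\nu+1/2}(1-y)|g(y)|\,dy.$$

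Next I would integrate in $x$. Because we are assuming $\nu>-1/2$, the exponent $\nu+1/2$ is positive, so on $(0,1)$ one has $y^{\nu+1/2}(1-y)\le 1$, and the $x$-integral $\int_0^1 x^{\nu+1/2}(1-x)\,dx$ is a finite constant. Therefore
$$\int_0^1 \sup_{t>1}|P_t g(x)|\,dx \le C\int_0^1 |g(y)|\,dy = C\,\|g\|_{L^1(0,1)},$$
which is exactly the desired $L^1$-boundedness of $M^\infty$.

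There is essentially no obstacle: the whole point is that the sharp kernel estimate for $t>1$ is separable and exponentially decaying, and the weights $x^{\nu+1/2}(1-x)$, $y^{\nu+1/2}(1-y)$ are bounded and integrable on $(0,1)$ under the standing assumption $\nu>-1/2$. The analogous statement for $\mathcal{M}^\infty$ (Lemma \ref{prop:MinftyNatural}) was obtained by the same reasoning using Lemma \ref{Lem:SharpNatural} in place of Lemma \ref{Lem:SharpLebesgue}.
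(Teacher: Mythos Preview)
Your argument is correct and is precisely what the paper intends: it simply states that the lemma ``follows directly from Lemma~\ref{Lem:SharpLebesgue}'', and your computation spells out that deduction. There is nothing to add.
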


    \begin{lem}\label{prop:Jstarstar}
    Let $j\in \ZZ^*\setminus \{0\}$. The maximal operator $\sup_{|\mc J_j^{**}|<t<1}|P_tg(x)|$ is bounded from $L^1(\mathcal{J}_{ j}^{**})$ into itself.
    \end{lem}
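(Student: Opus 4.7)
The plan is to mirror the proof of Lemma~\ref{prop:Istarstar}. As in that argument, by Fubini it suffices to establish the uniform-in-$(j,y)$ estimate
$$
\int_{\mc J_j^{**}}\sup_{|\mc J_j^{**}|<t<1}P_t(x,y)\,dx\le C, \qquad y\in\mc J_j^{**},
$$
which immediately yields $L^1(\mc J_j^{**})$-boundedness of the maximal operator. The strategy is to prove the pointwise bound
$$
P_t(x,y)\le \frac{C}{t},\qquad x,y\in \mc J_j^{**},\ |\mc J_j^{**}|<t<1.
$$
Since the supremum over $t>|\mc J_j^{**}|$ is then dominated by $C/|\mc J_j^{**}|$, and $|\mc J_j^{**}|$ is itself the Lebesgue measure of $\mc J_j^{**}$, the integral is clearly controlled by a universal constant.

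For $j\ge 1$ the argument is essentially a repetition of the one in Lemma~\ref{prop:Istarstar}. Indeed, $\mc J_j=\mc I_j$, the measure $\mu$ is comparable to Lebesgue measure on $\mc J_j^{**}$, and for $x,y\in\mc J_j^{**}$ we have $x\simeq y\simeq 1$ and $1-x\simeq 1-y\simeq 2^{-j}\simeq|\mc J_j^{**}|$. Inserting this into Lemma~\ref{Lem:SharpLebesgue} and using $t>|\mc J_j^{**}|$, each of the three factors is bounded with the last one dominated by $1/t$; this gives the claimed bound.

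The genuinely new case is $j\le -1$, where $\mc J_j=(2^{j-1},2^j]$, $|\mc J_j^{**}|\simeq 2^j$, and for $x,y\in\mc J_j^{**}$ one has $x\simeq y\simeq 2^j$ while $(1-x)(1-y)\simeq 1$. For $|\mc J_j^{**}|<t<1$, Lemma~\ref{Lem:SharpLebesgue} yields
$$
\frac{xy}{t^2+x^2+y^2}\le \frac{2^{2j}}{t^2},\qquad \frac{(1-x)(1-y)}{t^2+(1-x)^2+(1-y)^2}\simeq 1, \qquad \frac{t}{t^2+|x-y|^2}\le \frac{1}{t},
$$
and hence
$$
P_t(x,y)\le C\Big(\frac{2^{2j}}{t^2}\Big)^{\nu+1/2}\frac{1}{t}
=C\,\frac{2^{j(2\nu+1)}}{t^{2\nu+2}}\le \frac{C}{t},
$$
where the final inequality uses $t>2^j$ together with $2\nu+1>0$. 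This is the main point in the proof, and indeed the only place where the standing assumption $\nu>-1/2$ is invoked; combined with the case $j\ge 1$ it completes the argument.
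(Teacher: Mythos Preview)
Your proof is correct and follows the same strategy as the paper: reduce to the pointwise bound $P_t(x,y)\le C/t$ for $x,y\in\mc J_j^{**}$ and $|\mc J_j^{**}|<t<1$ via Lemma~\ref{Lem:SharpLebesgue}, then integrate over $\mc J_j^{**}$. The paper is simply terser; note in particular that the first factor in Lemma~\ref{Lem:SharpLebesgue} satisfies $xy/(t^2+x^2+y^2)\le 1$ universally, so your detour through $2^{2j}/t^2$ and the explicit use of $\nu>-1/2$ in the case $j\le -1$, while valid, is not strictly necessary.
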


    \begin{proof}
    Observe that $|\mc J_j^{**}|\simeq 2^{-|j|}$ and, for $|\mc J_j^{**}|<t<1$, by Lemma \ref{Lem:SharpLebesgue},
        $$\int_{\mathcal{J}^{\ast\ast}_{j}}\sup_{{|\mc J_j^{**}|}<t<1}P_t(x,y)\,dx\leq \int_{\mathcal{J}^{\ast\ast}_{j}}\sup_{{|\mc J_j^{**}|}<t<1}t^{-1}\,dx\leq C.$$
    \end{proof}

    \begin{lem}\label{prop:M0Lebesgue}
    Let $M^0g(x)=\sup_{0<t<1}|P_tg(x)|$, where $g$ is supported in $\mathcal{J}_{j}^{*}$, $j\in\ZZ^*$. Then, for a constant $C$ independent of $g$ and $j$,
        \begin{equation}\label{poi}
        \int_0^1 \chi_{(\mathcal{J}_{j}^{\ast\ast})^c}(x) \, M^0g(x)\,dx\le C\|g\|_{L^1(\mathcal{J}_{j}^\ast)}.
        \end{equation}
       \end{lem}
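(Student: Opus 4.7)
The plan is to reduce the claim via Fubini's theorem to the uniform bound
$$
\sup_{y\in\mc J_j^{*}}\int_0^1\chi_{(\mc J_j^{**})^c}(x)\sup_{0<t<1}P_t(x,y)\,dx\leq C,
$$
with $C$ independent of $j\in\ZZ^*$. Using Lemma \ref{Lem:SharpLebesgue}, the trivial bound $(1-x)(1-y)/(t^2+(1-x)^2+(1-y)^2)\leq 1$, the identity $\sup_{t>0}t/(t^2+r^2)=1/(2r)$, and the fact that $(xy/(t^2+x^2+y^2))^{\nu+1/2}$ is decreasing in $t$ since $\nu+1/2>0$, I arrive at the pointwise working estimate
$$
\sup_{0<t<1}P_t(x,y)\leq C\Big(\frac{xy}{x^2+y^2}\Big)^{\nu+1/2}\frac{1}{|x-y|},\qquad x\neq y,
$$
which will be the starting point of the case analysis.

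For $j\geq 1$ the argument essentially mirrors that of Lemma \ref{prop:M0Natural}. Since $\mc J_j=\mc I_j$ and $y\simeq 1$ on $\mc J_j^{*}$, the prefactor $(xy/(x^2+y^2))^{\nu+1/2}$ is uniformly bounded, so the two relevant sub-regions of $(\mc J_j^{**})^c\cap(1/2,1)$ (one with $x\simeq 1$, the other with $1-x\geq 2^{-j}$) are handled by exactly the same computations as in Lemma \ref{prop:M0Natural}, noting that $d\mu\simeq dx$ on $(1/2,1)$. The extra portion $x\in(0,1/2)$, which contributes nontrivially only in the Lebesgue setting, is immediately controlled via $\sup_t P_t(x,y)\leq Cx^{\nu+1/2}\cdot 2^{-j}$ and $\int_0^{1/2}x^{\nu+1/2}\,dx<\infty$.

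For $j\leq -1$ we have $\mc J_j=(2^{j-1},2^j]$, and on $\mc J_j^{*}$, $y\simeq 2^j$ and $(1-y)\simeq 1$. I split $(\mc J_j^{**})^c$ into a left part $L=\{x<c_1 2^j\}$ and a right part $R=\{x>c_2 2^j\}$, with constants $c_1<c_2$ determined by $\zeta$. On $L$, $|x-y|\simeq 2^j$ and $xy/(x^2+y^2)\leq x/y\simeq x\cdot 2^{-j}$, giving
$$
\int_L\sup_{0<t<1}P_t(x,y)\,dx\leq C\,2^{-j}\int_0^{c_1 2^j}\Big(\frac{x}{2^j}\Big)^{\nu+1/2}dx\leq C.
$$
On $R$, I further distinguish $c_2 2^j<x\leq 2y$ (a region of length $\simeq 2^j$ on which $\sup_t P_t\leq C\,2^{-j}$) from $x>2y$, where $xy/(x^2+y^2)\leq y/x$, $|x-y|\simeq x$, and the tail reduces to $y^{\nu+1/2}\int_{2y}^1 x^{-\nu-3/2}\,dx\leq C$.

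The main obstacle is this case $j\leq -1$: the factor $1/|x-y|$ is of order $2^{-j}$, providing no smallness in $j$, so the entire bound must come from the prefactor $(xy/(x^2+y^2))^{\nu+1/2}$. Both the integrability of $(x/2^j)^{\nu+1/2}$ at $x=0$ on $L$ and of $x^{-\nu-3/2}$ on the outer part of $R$ rely crucially on $\nu+1/2>0$, i.e.\ on the standing hypothesis $\nu>-1/2$.
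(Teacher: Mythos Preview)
Your proof is correct and follows essentially the same approach as the paper, which simply says to mirror Lemma~\ref{prop:M0Natural} (for $j\ge 1$ directly, and for $j\le -1$ by analogy using Lemma~\ref{Lem:SharpLebesgue}) and omits all details. Your write-up supplies exactly those omitted details; the one cosmetic point is that your ``extra portion'' bound $\sup_t P_t(x,y)\le Cx^{\nu+1/2}\cdot 2^{-j}$ for $x\in(0,1/2)$ needs the factor $(1-y)\simeq 2^{-j}$ from the full estimate of Lemma~\ref{Lem:SharpLebesgue}, not from your simplified working estimate---but since you only advertise the working estimate as the starting point for $j\le -1$, this is not a gap.
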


    \begin{proof}
    If $j\ge1$ then we actually can deduce \eqref{poi} from Lemma \ref{prop:M0Natural}. When $j\le-1$ we can proceed similarly to the proof of Lemma \ref{prop:M0Natural} by using Lemma \ref{Lem:SharpLebesgue}. The details are omitted.
    \end{proof}

Analogously to Section \ref{sec4}, we consider the space of homogeneous type $(\mathcal{J}_{j}^{**},|\cdot|,dx)$, for $j\in \ZZ^*$. Set $K_{j}(t,x,y)=P_t(x,y)$, for $0<t<|\mc J_j^{**}|\simeq 2^{-|j|}$, $x,y\in\mathcal{J}_{ j}^{**}$. We clearly have
    \begin{equation}\label{eq:identity2}
    \sup_{0<t<|\mc J_j^{**}|}\left|\int_{\mathcal{J}_{j}^{**}}K_{j}(t,x,y)g(y)\,dy\right|=\sup_{0<t<|\mc J_j^{**}|}|P_tg(x)|.
    \end{equation}

    \begin{lem}\label{Prop:UchiLebesgue}
    There exists a constant $A$, independent of $j$, such that for $0<t<|\mc J^{**}_j|$ and $x,y,z\in\mathcal{J}_{j}^{**}$ we have:
        \begin{enumerate}[(i)]
            \item $K_{j}(t,x,x)>\frac{1}{At}$;
            \item $0\leq K_{j}(t,x,y)\leq \frac{A}{t}\left(1+\frac{|x-y|}{t}\right)^{-2}$;
            \item if $|y-z|\leq\frac{1}{4A}(t+|x-y|)$, then
                $$|K_{j}(t,x,y)-K_{ j}(t,x,z)|\leq A\frac{|y-z|}{t^2}\left(1+\frac{|x-y|}{t}\right)^{-2}.$$
        \end{enumerate}
    \end{lem}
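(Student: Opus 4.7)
The plan is to mimic the proof of Lemma \ref{Prop:UchiNonLebesgue}, adapting it to the Lebesgue-measure setting and handling both $j\geq 1$ and $j\leq -1$. The key preliminary observation is that on $\mathcal{J}_j^{**}$ with $0<t<|\mathcal{J}_j^{**}|\simeq 2^{-|j|}$, the kernel $P_t(x,y)$ simplifies drastically via Lemma~\ref{Lem:SharpLebesgue}. Indeed, when $j\geq 1$, we have $x\simeq y\simeq 1$ (so the first factor in Lemma~\ref{Lem:SharpLebesgue} is $\simeq 1$) and $1-x\simeq 1-y\simeq 2^{-j}$ with $t<2^{-j}$ (so the second factor is $\simeq 1$); when $j\leq -1$, we have $x\simeq y\simeq 2^{j}$ with $t<2^{j}$ (so the first factor is $\simeq 1$) and $1-x\simeq 1-y\simeq 1$ (so the second factor is $\simeq 1$). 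Therefore in both regimes
$$P_t(x,y)\simeq \frac{t}{t^2+|x-y|^2}, \qquad x,y\in\mathcal{J}_j^{**},\ 0<t<|\mathcal{J}_j^{**}|.$$
From this, (i) follows by setting $x=y$, while (ii) is immediate after noting $\tfrac{t}{t^2+|x-y|^2}=\tfrac{1}{t}(1+|x-y|/t)^{-2}$ up to a constant.

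For (iii), I would split into the two cases $2|y-z|\geq t$ and $2|y-z|<t$, exactly as in the proof of Lemma~\ref{Prop:UchiNonLebesgue}. In the first case, the assumption $|y-z|\leq (t+|x-y|)/(4A)$ ensures $t+|x-y|\simeq t+|x-z|$, so (ii) applied to both terms gives $K_j(t,x,y)+K_j(t,x,z)\leq \tfrac{C}{t}\bigl(1+|x-y|/t\bigr)^{-2}$; combined with $\tfrac{1}{t}\leq \tfrac{2|y-z|}{t^2}$ this finishes the first case.

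The second case requires the mean-value theorem: $|K_j(t,x,y)-K_j(t,x,z)|=|y-z|\,|\partial_y P_t(x,\xi)|$ for some $\xi$ between $y$ and $z$. Here I would invoke Corollary~\ref{corrrr}, which bounds $|\partial_y P_t(x,y)|$ by $\tfrac{C}{t^2+|x-y|^2}+\tfrac{C}{y}|P_t(x,y)|$. The first term is already of the desired form. For the second term, I check that $\tfrac{1}{y}|P_t(x,y)|\leq \tfrac{C}{t^2+|x-y|^2}$ on $\mathcal{J}_j^{**}$: if $j\geq 1$ then $y\simeq 1$ and $t\leq 1$ make this immediate; if $j\leq -1$ then $y\simeq 2^j$ and $t<2^j$ give $t/y\leq C$, so $\tfrac{1}{y}\cdot\tfrac{t}{t^2+|x-y|^2}\leq \tfrac{C}{t^2+|x-y|^2}$. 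Hence on $\mathcal{J}_j^{**}$ with $0<t<|\mathcal{J}_j^{**}|$ one gets $|\partial_y P_t(x,y)|\leq \tfrac{C}{t^2+|x-y|^2}$. Finally, to convert this into the form $\tfrac{A|y-z|}{t^2}(1+|x-y|/t)^{-2}$, I use that $2|y-z|<t$ together with the hypothesis, which forces $|x-\xi|\simeq |x-y|$ (trivially when $|x-y|\leq t$, and because $|x-\xi|\geq |x-y|-|y-z|\geq|x-y|/2$ when $|x-y|>t\geq 2|y-z|$), so that $t^2+|x-\xi|^2\simeq t^2+|x-y|^2$.

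The main subtlety I anticipate is the uniformity of the constant $A$ across all $j\in\mathbb{Z}^*$: one must be careful that each equivalence ``$\simeq$'' used in the reduction $P_t(x,y)\simeq t/(t^2+|x-y|^2)$ and in the bound on $\tfrac{1}{y}|P_t(x,y)|$ holds with a constant independent of $j$. This uniformity is built into Lemma~\ref{Lem:SharpLebesgue} and the scaling relations on $\mathcal{J}_j^{**}$, so the argument goes through with a single universal $A$. The remainder is essentially bookkeeping, and the proof can be written up as a compressed sketch noting the above reductions and referring to the parallel proof of Lemma~\ref{Prop:UchiNonLebesgue}.
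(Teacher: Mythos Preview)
Your proposal is correct and follows essentially the same approach as the paper: the paper treats only $j\le -1$ (declaring $j\ge 1$ analogous), reduces (i) and (ii) to Lemma~\ref{Lem:SharpLebesgue}, and for (iii) mimics Lemma~\ref{Prop:UchiNonLebesgue} via the mean-value theorem together with Corollary~\ref{corrrr} and the bound $t/\xi\le C$. Your preliminary simplification $P_t(x,y)\simeq t/(t^2+|x-y|^2)$ on $\mathcal{J}_j^{**}$ is exactly the content of the paper's reductions, and your treatment of the term $\tfrac{1}{y}|P_t(x,y)|$ matches the paper's use of $t<C\,2^j\simeq\xi$.
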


    \begin{proof}
    We consider only the case $j\leq -1$. The proof for $j\geq 1$ is very similar.

    \noindent\textbf{\textit{(i).}} If $x\in \mc J_j^{**}$, then $x\simeq 2^{j}$ and $1-x \simeq 1$. Since $|\mc J^{**}_j|\simeq 2^{j}$, from Lemma \ref{Lem:SharpLebesgue},
    $$K_{j}(t,x,x)\simeq \frac{1}{t}\left(\frac{x^2}{t^2+x^2}\right)^{\nu+1/2}\geq \frac{1}{At}.$$

    \noindent\textbf{\textit{(ii)}.} The proof follows directly from Lemma \ref{Lem:SharpLebesgue}.

    \noindent\textbf{\textit{(iii)}.} We proceed analogously to the proof of Lemma \ref{Prop:UchiNonLebesgue}. We only consider the case $2|y-z|<t$.  By the mean-value theorem,
    $$|K_{j}(t,x,y)-K_{j}(t,x,z)|=|y-z|\left|\frac{\partial}{\partial y}P_t(x,y)\Big|_{y=\xi}\right|,$$
    for some $\xi$ between $y$ and $z$. Recall that $x,y,z\simeq 2^{j}$, so also $\xi\simeq2^{j}$. By Corollary \ref{corrrr} and Lemma \ref{Lem:SharpLebesgue},
    \begin{align*}
        |K_{j}(t,x,y)-K_{j}(t,x,z)| &\leq  C\frac{|y-z|}{t^2}\left(\(1+\frac{|x-\xi|}{t}\)^{-2}+\frac{t^2}{\xi}P_t(x,\xi)\right)\\
        &\leq C\frac{|y-z|}{t^2}\(1+\frac{|x-\xi|}{t}\)^{-2},
    \end{align*}
    where in the last inequality we have used that $t<C2^{j}\simeq \xi$.
    \end{proof}

    \begin{cor}\label{th:UchiLeb}
    For $j\in\ZZ^*$,
    $$
    \|g\|_{H^1_{\mathrm{at}} \,(\mathcal{J}_j^{**})}\simeq \Big\|\sup_{0<t<|\mc J_j^{**}|}|P_tg(x)|\Big\|_{L^1{(\mathcal{J}_j^{**})}}.
    $$
    \end{cor}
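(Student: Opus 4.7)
The plan is to deduce Corollary \ref{th:UchiLeb} as a direct application of Uchiyama's theorem (Theorem \ref{th:Uchiyama}) to the space of homogeneous type $(\mathcal{J}_j^{**}, |\cdot|, dx)$, taking as the kernel the Poisson kernel $K_j(t,x,y)=P_t(x,y)$ restricted to the scale $0<t<|\mathcal{J}_j^{**}|$. This parallels exactly the argument already executed for Corollary \ref{th:UchiNonLeb}, so the work has essentially been front-loaded into Lemma \ref{Prop:UchiLebesgue}.

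First I would observe that $(\mathcal{J}_j^{**}, |\cdot|, dx)$ is indeed a space of homogeneous type whose total measure is $|\mathcal{J}_j^{**}|$, and the Lebesgue measure clearly satisfies the required linear growth $A^{-1}r \le |B(x,r)\cap \mathcal{J}_j^{**}| \le Ar$ for $0<r<|\mathcal{J}_j^{**}|$, with a constant $A$ independent of $j$. Next, Lemma \ref{Prop:UchiLebesgue} verifies \eqref{UchiRep1}, \eqref{UchiRep2} and \eqref{UchiRep3} for the kernel $K_j(t,x,y)$ with parameters $\gamma_1=\gamma_3=1$, $\gamma_2=1$, and with a constant $A$ that is uniform in $j\in\mathbb{Z}^*$. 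The identity \eqref{eq:identity2} then says precisely that the Uchiyama maximal function $f^{(+)}$ associated with $K_j$ (see \eqref{eq:maximalUchi}) coincides on $\mathcal{J}_j^{**}$ with $\sup_{0<t<|\mathcal{J}_j^{**}|}|P_t g(x)|$.

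At this point Theorem \ref{th:Uchiyama} yields constants $C_1, C_2>0$, depending only on $A$ and the $\gamma_i$'s, such that
$$
C_1 \|g\|_{H^1_{\mathrm{at}}(\mathcal{J}_j^{**})} \le \Big\|\sup_{0<t<|\mathcal{J}_j^{**}|}|P_t g|\Big\|_{L^1(\mathcal{J}_j^{**})} \le C_2 \|g\|_{H^1_{\mathrm{at}}(\mathcal{J}_j^{**})}.
$$
Because $A$, $\gamma_1$, $\gamma_2$, $\gamma_3$ do not depend on $j$, the constants $C_1$ and $C_2$ are themselves independent of $j\in\mathbb{Z}^*$, which is exactly the content of the corollary.

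The only conceptual point requiring care, which I regard as the main (and mild) obstacle, is the uniformity in $j$ as $|j|\to\infty$: one must be sure that the constant $A$ in Lemma \ref{Prop:UchiLebesgue} does not degenerate. This is guaranteed by the scaling $x,y\simeq 2^{j}$ (for $j\le -1$) or $1-x, 1-y \simeq 2^{-j}$ (for $j\ge 1$), under which the sharp bounds of Lemma \ref{Lem:SharpLebesgue} and Corollary \ref{corrrr} scale compatibly with $|\mathcal{J}_j^{**}|$, as was exploited in the proof of Lemma \ref{Prop:UchiLebesgue}. Once this is in hand, no additional computation is needed.
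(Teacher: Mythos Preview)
Your proof is correct and follows exactly the same approach as the paper, which simply states that the result follows from \eqref{eq:identity2} and Lemma \ref{Prop:UchiLebesgue} by applying Theorem \ref{th:Uchiyama}. Your additional remarks on the uniformity of the constants in $j$ make explicit what the paper leaves implicit.
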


    \begin{proof}
    The result follows from \eqref{eq:identity2} and Lemma \ref{Prop:UchiLebesgue} by applying Theorem \ref{th:Uchiyama}.
    \end{proof}

\noindent\textbf{Partition of unity.} For $j\in \ZZ^*$, let $\wt \eta_j\in C^\infty(0,1)$ such that:
    $$\supp\, \wt \eta_j\subseteq \mathcal{J}_j^{*}, \quad 0\leq\wt \eta_j\leq 1, \quad \Big|\frac{d}{dx}\wt \eta_j(x)\Big|\leq C2^{|j|} \quad \text{and} \quad \sum_{j\in \ZZ^*}\wt \eta_j(x)=1.$$

    \begin{lem}\label{lem:commutatorsLebesgue}
    The integral kernel
        $$    V(x,y)=\sum_{j\in \ZZ^*}\sup_{0<t<|\mc J_j^{**}|}|(\wt \eta_j(x)-\wt \eta_j(y))P_t(x,y)|,    $$
    satisfies
    $$\sup_{y\in(0,1)}\int_0^1V(x,y)\,dx\le C.
    $$
    \end{lem}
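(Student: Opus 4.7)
The plan is to mirror the proof of Lemma \ref{lem:commutators}, treating the Lebesgue case with Lemma \ref{Lem:SharpLebesgue} in place of Lemma \ref{Lem:SharpNatural}, with one additional wrinkle coming from the fact that the index $j$ now ranges over $\mathbb{Z}^*$ instead of $\NN\cup\{0\}$. I would fix $y\in(0,1)$ and let $j_0\in\mathbb{Z}^*$ be the unique index with $y\in\mathcal{J}_{j_0}$. Recall $|\mathcal{J}_j^{**}|\simeq 2^{-|j|}$, and note that $\mathrm{supp}\,\wt\eta_j\subseteq\mathcal{J}_j^*$, so $\wt\eta_j(y)=0$ whenever $|j-j_0|\ge 2$.

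The sum defining $V(x,y)$ would be split into three regimes.

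\emph{Same sign, non-adjacent} ($j,j_0$ both $\ge 1$ or both $\le -1$, with $|j-j_0|\ge 2$). Here $\wt\eta_j(y)=0$, and whenever $\wt\eta_j(x)\ne 0$ one has $|x-y|\simeq 2^{-|j|}+2^{-|j_0|}$. For $j_0\ge 1$ the estimates are literally those of Lemma \ref{lem:commutators}, since $x,y\simeq 1$ makes the factor $(xy/(t^2+x^2+y^2))^{\nu+1/2}$ from Lemma \ref{Lem:SharpLebesgue} harmless. For $j_0\le -1$ the symmetric argument applies with the roles of the two geometric factors in Lemma \ref{Lem:SharpLebesgue} swapped: now $(1-x)(1-y)/(t^2+(1-x)^2+(1-y)^2)\simeq 1$ and the small factor comes from $(xy/(t^2+x^2+y^2))^{\nu+1/2}\le C$ combined with $t/(t^2+|x-y|^2)$. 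In either case one obtains a geometric series in $|j-j_0|$ that sums to $O(1)$ after integration in $x$.

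\emph{Same sign, adjacent} ($|j-j_0|\le 1$). Outside $\mathcal{J}_{j_0-1}^*\cup\mathcal{J}_{j_0}^*\cup\mathcal{J}_{j_0+1}^*$ the function $\wt\eta_j$ vanishes at $x$, so that contribution is controlled by Lemma \ref{prop:M0Lebesgue}. Inside this union, the mean-value theorem together with $|\wt\eta_j'|\le C2^{|j|}$ gives
\[
|\wt\eta_j(x)-\wt\eta_j(y)|\,P_t(x,y)\le C2^{|j|}\,\frac{t|x-y|}{t^2+(x-y)^2}\le C2^{|j|},
\]
and integrating over $\mathcal{J}_{j_0-1}^*\cup\mathcal{J}_{j_0}^*\cup\mathcal{J}_{j_0+1}^*$ yields a bound of $C2^{|j|}\cdot 2^{-|j_0|}\simeq C$ for each of the three values of $j$.

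\emph{Opposite signs} (e.g.\ $j_0\ge 1$ and $j\le -1$, or vice versa). This is the genuinely new case, where $x\in\mathcal{J}_j^*$ is near one endpoint and $y\in\mathcal{J}_{j_0}$ near the other, so $|x-y|\ge c>0$ and $\wt\eta_j(y)=0$. From Lemma \ref{Lem:SharpLebesgue} applied with $t<|\mathcal{J}_j^{**}|\le 1$, both the factor $(xy/(t^2+x^2+y^2))^{\nu+1/2}$ and the factor $(1-x)(1-y)/(t^2+(1-x)^2+(1-y)^2)$ produce smallness: one contributes $\lesssim 2^{-(2\nu+1)\min(|j|,|j_0|)}$ from the endpoint near $0$, and the other contributes $\lesssim 2^{-\min(|j|,|j_0|)}$ from the endpoint near $1$. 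Together with $t/(t^2+|x-y|^2)\le C$, integrating in $x$ over $\mathcal{J}_j^*$ (of length $2^{-|j|}$) gives a bound that is summable in $j$ with fixed $j_0$.

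The main obstacle I anticipate is the bookkeeping of Case 3: keeping both geometric factors of Lemma \ref{Lem:SharpLebesgue} tracked simultaneously on dyadic intervals that now stretch toward both endpoints $0$ and $1$. Once this is done, summing the resulting geometric series is routine and yields the uniform $L^1(dx)$ bound on $V(\cdot,y)$.
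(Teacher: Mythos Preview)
Your proposal is correct and matches the paper's approach, which simply states that the argument is analogous to Lemma \ref{lem:commutators} and omits the details. One small correction to your case split: the intervals $\mathcal{J}_{-1}=(1/4,1/2]$ and $\mathcal{J}_1=(1/2,3/4]$ are adjacent, so for the pair $(j_0,j)=(\pm 1,\mp 1)$ your claim $|x-y|\ge c>0$ in the ``opposite signs'' case fails; this single pair should instead be absorbed into your ``adjacent'' case, where the mean-value argument applies verbatim since $x,y\simeq 1/2$ makes both geometric factors in Lemma \ref{Lem:SharpLebesgue} harmless.
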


    \begin{proof}
    The lemma can be proved analogously to Lemma \ref{lem:commutators}. The details are omitted.
    \end{proof}

%%%%%%%%%%%%%%%%%%
\subsection{Sketch of the proof of Theorem \ref{thmB}}
Let $a$ be an $L$-atom with $\supp\, a\subseteq \mathcal{J}_j$. Then,
    \begin{align*}
    \int_0^1Ma(x)\,dx\leq &\int_0^1M^{\infty}a(x)\,dx+\int_0^1 \chi_{(\mathcal{J}_j^{**})^c}(x)M^0a(x)\,dx\\
    &+\int_{\mathcal{J}_j^{**}} \sup_{|\mc J_j^{**}|<t<1} | P_t a(x)|\,dx+\int_{\mathcal{J}_j^{**}} \sup_{0<t<|\mc J_j^{**}|} |P_t a(x)|\,dx.
    \end{align*}
By Lemmas \ref{prop:MinftyLebesgue}, \ref{prop:Jstarstar}, \ref{prop:M0Lebesgue}, and Corollary \ref{th:UchiLeb}, the terms are bounded by constants independent of $a$.

An argument completely analogous to the one used in Remark \ref{ree1} shows that there exists a constant $C$ such that for a function $b$ with $\supp \, b \subseteq \mc J_j^*$ and $\|b\|_\8 \leq C |\mc J_j^*|^{-1}$ for $j\in \ZZ^*$, we have
    \begin{equation}\label{ree2}
    \|M b\|_{L^1(0,1)} \leq C.
    \end{equation}

Suppose now that there is no $j$ for which $\supp\, a\subseteq \mathcal{J}_j^{*}$, then we take $J$ such that $\supp\, a\subseteq J$ and $\|a\|_{\infty}\le|J|^{-1}$ and we fix the largest $N_1\in \ZZ^*\cup \{-\infty\}$ and the smallest $N_2\in \ZZ^*\cup \{\infty\}$ that verify
    $$J\subseteq \bigcup_{j\in\{N_1,...,N_2\}\setminus\{0\}}\mathcal{J}_j^{*}.$$
Set $K=1$ when $N_1<0$ and $N_2 >0$ and $K= \min(|N_1|,|N_2|)$ in the opposite case. Then $|J|\simeq 2^{-K}$. We can write
    $$ a=\sum_{j\in\{N_1,...,N_2\}\setminus\{0\}} 2^{K-|j|}b_j, $$
where $b_j=a\cdot 2^{|j|-K}\chi_{\mathcal{J}_j}$. Observe that $\supp \, b_j \subseteq \mc J_j^*$ and $\|b_j\|_\8 \leq C2^{|j|}$. Then, by \eqref{ree2}, we get $\|Mb_j\|_{L^1(0,1)}\le C$ and consequently, $\|Ma\|_{L^1(0,1)}\le C$.

For the converse, we take a function $g\in H_{L}^1$. By using the partition of unity above, $g=\sum_{j\in \ZZ^*}\wt \eta_jg$. By an analogous argument to that used in \eqref{eq:sum} together with Lemma \ref{lem:commutatorsLebesgue}, we obtain
$$\sum_{j\in \ZZ^*}^{\infty}\|M_j(\wt \eta_jg)\|_{L^1(\mathcal{J}_j^{**})}\le C \|g\|_{H^1_L}.$$
Now, by Corollary \ref{th:UchiLeb}, for each $j\in \ZZ^*$, $\wt \eta_jg$ has an atomic decomposition into atoms associated with $\mathcal{J}_j^{**}$, so that
    $$
    g(x)=\sum_{j\in \ZZ^*}\wt \eta_j(x)g(x)=\sum_{j\in \ZZ^*}\sum_{k=1}^{\infty} \lambda_{j,k}a_{j,k}(x),
    $$
where $a_{j,k}$ are $H^1_{\mathrm{at}}(\mathcal{J}_j^{**})$ atoms, and
    $$
    \sum_{j\in \ZZ^*}\sum_{k=1}^{\infty}| \lambda_{j,k}|\le C \sum_{j\in \ZZ^*}\|M_j(\wt \eta_jg)\|_{L^1(\mathcal{J}_j^{**})}\le C \|g\|_{H^1_L}.
    $$
The proof is finished, since every $C^{-1}a_{j,k}$ is an $L$-atom or it is the sum of two $L$-atoms, see the end of the proof of Theorem \ref{thmA}.

\medskip
%%%%%%%%%%%%%%%%%%%%%%   ACKNOWLEDGMENTS   %%%%%%%%%%%%%%%%%%%%%%%%%%
\noindent{\bf Acknowledgments.} This research started when the fourth author was a researcher at Universidad de La Rioja, Spain, under grant COLABORA 2010/01 from Planes Riojanos de I+D+I, Spain. The second author is grateful to Departamento de Matem\'aticas y Computaci\'on of Universidad de La Rioja, Spain, for their hospitality. The third author is also grateful to Instytut Matematyczny of Uniwersytet Wroc\l awski, Poland, for their hospitality.

The authors are also grateful to \'Oscar Ciaurri, Adam Nowak and Jos\'e L. Torrea for useful discussions and comments.

%%%%%%%%%%%%%%%%%%%%%%   REFERENCES   %%%%%%%%%%%%%%%%%%%%%%%%%%%%%

\end{document}